\documentclass[11pt]{article}

\usepackage[a4paper,margin=1in]{geometry}
\usepackage[T1]{fontenc}
\usepackage[utf8]{inputenc}
\usepackage{lmodern}
\usepackage{amsmath,amsthm,amssymb,mathtools}
\numberwithin{equation}{section}
\usepackage{enumitem}
\usepackage{lineno}
\usepackage[colorlinks=true,linkcolor=blue,citecolor=blue,urlcolor=blue]{hyperref}

\newtheorem{theorem}{Theorem}[section]
\newtheorem{lemma}[theorem]{Lemma}
\newtheorem{corollary}[theorem]{Corollary}
\newtheorem{proposition}[theorem]{Proposition}
\newtheorem{problem}[theorem]{Problem}

\theoremstyle{definition}

\newtheorem{remark}[theorem]{Remark}

\DeclareMathOperator{\ex}{ex}
\DeclareMathOperator{\pal}{pal}
\DeclareMathOperator{\revop}{rev}
\newcommand{\piu}{\pi_{\mathrm u}}
\newcommand{\R}{\mathbb R}
\newcommand{\Cvec}[1]{\overrightarrow{C}_{#1}}
\newcommand{\Fchain}{\overrightarrow{\mathcal F}}
\newcommand{\A}{\mathcal A}
\newcommand{\C}{\mathcal C}

\title{Palette extremality via degree-square Tur\'an problems}

\author{
Jiangdong Ai\thanks{School of Mathematical Sciences and LPMC, Nankai University, Tianjin 300071, China. Email: \texttt{jd@nankai.edu.cn}. Supported by the National Natural Science Foundation of China (No.12522117).}
\and
Bin Chen\thanks{School of Mathematics and Statistics, Fuzhou University, Fujian, China. Email: \texttt{cbfzu03@163.com}. Supported by the National Natural Science Foundation of China (Nos.12501473 and 12471336).}
\and
Ming Chen\thanks{School of Mathematics and Statistics, Jiangsu Normal University, Xuzhou, China. Email: \texttt{chenming314@jsnu.edu.cn}. Supported by the National Key Research and Development Program of China (No.2024YFA1013900), the Basic Research Program of Jiangsu (No.BK20251044), the National Natural Science Foundation of China (No.12501483), and the Natural Science Foundation of the Jiangsu Higher Education Institutions of China (No.25KJB110003).}
\and
Zilong Yan\thanks{School of Mathematics, Hunan University,
 Changsha, China. Email: \texttt{zilongyan@hnu.edu.cn}.}
\and
Tianxiao Zhao\thanks{School of Mathematics, Harbin Institute of Technology, Harbin, China. Email: \texttt{zhaotianxiao@hit.edu.cn}. Supported by the National Natural Science Foundation of China (No.12501475).}
}

\begin{document}
\maketitle

\begin{abstract}
We study finite palette extremal problems motivated by uniform Tur\'an
densities of $3$-uniform hypergraphs. Given a self-converse tournament $T$ with at least two vertices, we determine
the largest possible number of admissible triples in an $m$-color palette that avoids the left and right palettes associated with $T$.  The answer is the one-sided degree-square Tur\'an number
\[
\operatorname{pal}_T(m)
=
\operatorname{ex}_2^+(m,T)
=
\max\left\{
\sum_{v\in V(D)} d_D^+(v)^2:
|V(D)|=m,\ D\text{ is }T\text{-free}
\right\}.
\]
Thus this palette problem is reduced to an extremal problem for digraph
out-degrees.

We then prove a prefix-majorization lemma for convex out-degree moments and
apply it to directed cycles.  In particular, $\operatorname{ex}_2^+(m,\overrightarrow C_3)=\frac{m(m^2-1)}3$,
which gives the sharp $m$-color palette endpoint $\frac13-\frac{1}{3m^2}$ for the directed triangle.  Combining this endpoint with the palette characterization of uniform Tur\'an density and the palette separation theorem,
we show that for every $m\ge2$ there is a finite $3$-graph $H_m$ such that
\[
\frac13-\frac{1}{3m^2}\le \pi_u(H_m)\le \frac13.
\]
Hence there is a sequence of individual finite $3$-graphs whose uniform Tur\'an densities converge to $1/3$.

We also describe the extremal palettes, prove a qualitative edit-distance
stability theorem, and compute the Lagrangian of the endpoint palette
$\mathcal P_m^\star$.  As a consequence, for every $m\ge2$ there is a
finite family $\mathcal F_m$ of $3$-graphs with $\pi_u(\mathcal F_m)=\frac13-\frac{1}{3m^2}$,
so $1/3$ is an accumulation point of uniform Tur\'an densities of finite forbidden families.
\end{abstract}

\section{Introduction}

Tur\'an-type problems ask for the largest possible density of a large
combinatorial structure avoiding a prescribed configuration.  For ordinary
graphs, the asymptotic theory is governed by the Erd\H{o}s--Stone--Simonovits
theorem~\cite{ErdosSimonovits1966,ErdosStone1946}.  Hypergraph Tur\'an theory
is much less rigid: even for $3$-uniform hypergraphs, several basic density
problems remain open.

The present paper concerns the uniform-density version introduced by
Erd\H{o}s and S\'os~\cite{ErdosSos1982}.  Let $H$ be a finite $3$-uniform
hypergraph.  For $d\in[0,1]$ and $\eta>0$, an $n$-vertex $3$-graph $G$ is
called $(d,\eta)$-\textit{uniformly dense} if every vertex subset
$U\subseteq V(G)$ with $|U|\ge \eta n$ satisfies
 $e_G(U)\ge d\binom{|U|}{3}$.
 
The \textit{uniform Tur\'an density} of $H$, denoted by $\piu(H)$, is the
supremum of all $d\in[0,1]$ such that for every $\eta>0$ and every $n_0$
there exists an $H$-free $(d,\eta)$-uniformly dense $3$-graph on some
$n\ge n_0$ vertices.  
In other words, $\piu(H)$ measures the largest edge density that
can be forced inside every linear-size vertex subset while still avoiding
$H$.  The first nontrivial exact value in this setting was the density of
the broken tetrahedron $K_4^{(3)-}$, determined independently by Glebov,
Kr\'al' and Volec~\cite{GlebovKralVolec2016} and by Reiher, R\"odl and
Schacht~\cite{ReiherRodlSchacht2018}.  Further exact results include the
uniform Tur\'an densities of tight cycles, obtained by Buci\'c, Cooper,
Kr\'al', Mohr and Munh\'a Correia~\cite{BucicCooperKralMohrMunha2023}.

A central tool in this area is the palette method.  A \textit{palette} is a
pair $\mathcal P=(\C,\A)$, where $\C$ is a finite set of colors and
$\A\subseteq \C^3$ is a set of admissible ordered triples.  Given a linear
order on a vertex set and a coloring $\chi:\binom{V}{2}\to \C$, a triple
$xyz$ with $x<y<z$ is declared to be an edge whenever
$\bigl(\chi(xy),\chi(xz),\chi(yz)\bigr)\in\A$.

A $3$-graph $H$ is $\mathcal P$-\textit{colorable} if some ordering of
$V(H)$ and some coloring of all pairs by colors from $\C$ make every edge of
$H$ admissible in this sense.  The \textit{density} of $\mathcal P$ is $d(\mathcal P)=\frac{|\A|}{|\C|^3}$.
If $H$ is not $\mathcal P$-colorable, then a standard probabilistic construction based on $\mathcal P$ gives the lower bound $\piu(H)\ge d(\mathcal P)$.

Recent work has shown that these palette lower bounds are, in a precise
sense, exhaustive.  Lamaison~\cite{Lamaison2024} proved that $\piu(H)$ is
determined by the densities of palettes that do not color $H$.  Kr\'al',
Ku\v{c}er\'ak, Lamaison and Tardos~\cite{KKLT2025} then proved a palette
separation theorem, giving necessary and sufficient conditions for the
existence of a finite $3$-graph satisfying prescribed palette colorability
and non-colorability constraints.  Related work of King, Piga, Sales and
Sch\"ulke~\cite{KingPigaSalesSchulke2025} studied possible uniform Tur\'an
densities through palette Lagrangians. Very recently, Liu and
Pikhurko~\cite{LiuPikhurko2026} proved that the uniform Tur\'an densities of
finite families of $3$-graphs are dense in a nondegenerate interval.

These developments reduce many questions about uniform Tur\'an density to
finite extremal problems for palettes.  The guiding problem in this paper is:
given a fixed number of colors and a collection of forbidden palettes,
how many admissible triples can a palette have while avoiding all palettes in the
collection?
We solve this problem for a natural
family of left and right tournament palettes, and identify the answer with a one-sided degree-square
Tur\'an problem for digraphs. Our results complement the interval theorem of Liu and Pikhurko by giving an explicit sequence converging to the specific
point $1/3$, together with exact extremal structures, stability, and a palette-Lagrangian calculation.

\subsection{Relation to previous palette--digraph reductions}

Several recent papers have connected palette extremal problems with digraph
extremal theory.  Lamaison and Wu~\cite{LamaisonWu2024} used an auxiliary
digraph associated with a palette in their work on the uniform Tur\'an density
of large stars.  Lin and Zhou~\cite{LinZhou2025} developed this approach for
stars in uniformly dense hypergraphs, using auxiliary digraphs together with
Caro--Wei type degree estimates.  Lin, Wang, Zhou and Zhou~\cite{LinWangZhouZhou2026}
subsequently introduced a systematic connection between digraph Tur\'an
problems and uniform Tur\'an densities of $3$-graphs, including left and right
palettes associated with digraphs.  Very recently, Lin, Sun, Wang and
Zhou~\cite{LinSunWangZhou2026} extended the palette framework to certain $k$-uniform settings.

Our contribution is a degree-square refinement of this line of work. For the left and right palettes generated by a self-converse tournament, we prove an
exact identity between finite palette extremality and a degree-square Tur\'an problem for digraphs. We then compute the relevant degree-square Tur\'an
numbers for directed cycles through a prefix-majorization principle for convex out-degree moments.

The new point is that the relevant palette extremal problem is not governed
only by the ordinary number of arcs in an auxiliary digraph, but by a
one-sided convex degree moment.  This allows us to obtain exact finite
endpoints, uniqueness of extremal palettes, edit-distance stability, and
exact palette Lagrangians for the cyclic-triangle endpoint.

\subsection{Main results}
Throughout the paper, digraphs are finite. Unless loops are explicitly
allowed, they are loopless, but digons (pairs of opposite arcs on the same two vertices) are permitted. An $F$-free digraph contains no copy of $F$ (not necessarily induced), and two opposite arcs are counted separately. We write $D^{\mathrm{rev}}$ for the digraph obtained by reversing every arc of $D$, and we call $D$ \emph{self-converse} if $D\cong D^{\mathrm{rev}}$.

For a loopless digraph $F$ and a nondecreasing convex function
$\varphi:[0,n-1]\to\R$, define
\[
\operatorname{ex}_{\varphi}^{+}(n,F)
=
\max\left\{
\sum_{v\in V(D)}\varphi(d_D^+(v)):
|V(D)|=n,\; D\text{ is }F\text{-free}
\right\}.
\]
In particular,
$\operatorname{ex}_2^+(n,F)=\operatorname{ex}_{x^2}^{+}(n,F)$.

Let $D$ be a loopless digraph, and let
\[
\C_D=V(D)\sqcup\{c_{uv}:uv\in A(D)\},
\]
where $c_{uv}$ are new colors, one for each arc $uv$. The left
and right palettes generated by $D$ are
\[
\mathcal{P}_D^L=(\C_D,\{(u,v,c_{uv}):uv\in A(D)\})
\]
and
\[
\mathcal{P}_D^R=(\C_D,\{(c_{uv},u,v):uv\in A(D)\}).
\]
A palette \textit{homomorphism} $\mathcal P\to \mathcal P'$ is a map between
color sets preserving admissible triples, and we write $\mathcal P\preceq
\mathcal P'$ when such a homomorphism exists.
For a palette $\mathcal P$,
we write $\C(\mathcal P)$ and $\A(\mathcal P)$ for the set of colors and the set of
admissible triples respectively.
For a tournament $T$, let
\[
\pal_T(m)=
\max\{ |\A(\mathcal P)|:
|\C(\mathcal P)|=m,
\mathcal{P}_T^L\npreceq \mathcal P,\ \text{and}\ 
\mathcal{P}_T^R\npreceq \mathcal P\}.
\]
Our first main theorem identifies this finite palette extremal problem with a
one-sided degree-square Tur\'an problem.

\begin{theorem}\label{thm:intro-palette}
Let $T$ be a self-converse tournament with at least two vertices. Then, for every $m\ge 1$,
$\pal_T(m)=\operatorname{ex}_2^+(m,T)$, where
\[
\operatorname{ex}_2^+(m,T)
=
\max
\left\{
\sum_{v\in V(D)} d_D^+(v)^2:
|V(D)|=m,\; D\text{ is }T\text{-free}
\right\}.
\]
\end{theorem}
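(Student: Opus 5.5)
The plan is to translate the condition $\mathcal P_T^L\npreceq\mathcal P$ (and its right analogue) into a statement about two auxiliary digraphs on the color set, and then count admissible triples through their middle color.

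\textbf{Reformulation.} For a palette $\mathcal P=(\C,\A)$, define two digraphs on $\C$, where loops are allowed:
\begin{itemize}[leftmargin=*]
\item the \emph{left digraph} $L(\mathcal P)$ has an arc $ab$ whenever $(a,b,c)\in\A$ for some $c$;
\item the \emph{right digraph} $R(\mathcal P)$ has an arc $bc$ whenever $(a,b,c)\in\A$ for some $a$.
\end{itemize}
Since the colors $c_{uv}$ of $\mathcal P_T^L$ are pairwise distinct and each occurs in exactly one admissible triple, they can be mapped freely. Hence $\mathcal P_T^L\preceq\mathcal P$ holds iff there is a map $g:V(T)\to\C$ such that every arc $uv$ of $T$ has $g(u)g(v)\in A(L(\mathcal P))$, i.e.\ iff $T$ has a digraph homomorphism into $L(\mathcal P)$. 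The same argument gives $\mathcal P_T^R\preceq\mathcal P$ iff $T\to R(\mathcal P)$.

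Because $T$ is a tournament with at least two vertices, the following are equivalent for a digraph $H$ that may have loops:
\begin{itemize}[leftmargin=*]
\item $T$ has a homomorphism into $H$;
\item $H$ has a loop, or $H$ contains a copy of $T$.
\end{itemize}
Indeed, if two vertices of $T$ share an image, the arc between them must go to a loop. Conversely, a loop receives the constant map. An injective homomorphism is exactly a (not necessarily induced) copy, and digons are harmless. Therefore $\mathcal P$ is admissible for $\pal_T(m)$ iff both $L(\mathcal P)$ and $R(\mathcal P)$ are loopless and $T$-free.

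\textbf{Upper bound.} Every triple $(a,b,c)\in\A$ has $ab\in A(L)$ and $bc\in A(R)$, where $L=L(\mathcal P)$ and $R=R(\mathcal P)$. Grouping triples by the middle color $b$ and using AM--GM,
\[
|\A|\le\sum_{b\in\C} d_L^-(b)\,d_R^+(b)\le \frac12\sum_b d_L^-(b)^2+\frac12\sum_b d_R^+(b)^2 .
\]
The second sum is at most $\operatorname{ex}_2^+(m,T)$, since $R$ is a loopless $T$-free digraph on $m$ vertices. For the first sum, $\sum_b d_L^-(b)^2=\sum_b d^+_{L^{\mathrm{rev}}}(b)^2$. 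The digraph $L^{\mathrm{rev}}$ is $T^{\mathrm{rev}}$-free, and $T^{\mathrm{rev}}\cong T$ because $T$ is self-converse, so this sum is also at most $\operatorname{ex}_2^+(m,T)$. Hence $|\A|\le\operatorname{ex}_2^+(m,T)$. The self-converse hypothesis is used exactly at this step.

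\textbf{Lower bound.} Take a $T$-free loopless digraph $D$ on $m$ vertices attaining $\operatorname{ex}_2^+(m,T)$, and let $\C=V(D)$ with
\[
\A=\{(a,b,c): ba\in A(D),\ bc\in A(D)\}.
\]
Then $|\A|=\sum_b d_D^+(b)^2$, where $a=c$ is allowed. We have $L(\mathcal P)\subseteq D^{\mathrm{rev}}$ and $R(\mathcal P)\subseteq D$. Both are loopless, and both are $T$-free, the first because $T\cong T^{\mathrm{rev}}$. By the reformulation, neither $\mathcal P_T^L$ nor $\mathcal P_T^R$ maps into $\mathcal P$. This gives $\pal_T(m)\ge\operatorname{ex}_2^+(m,T)$, and together with the upper bound proves the theorem.

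\textbf{Main obstacle.} The main point needing care is the reformulation step. One must check that a homomorphism of a tournament with at least two vertices into a digraph with loops allowed is either forced through a loop or is injective. One must also check that the free arc-colors $c_{uv}$ impose no constraint beyond the existence of some third coordinate. The counting itself is then routine.
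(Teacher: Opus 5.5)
Your proposal is correct and follows the paper's proof step for step: the same auxiliary digraphs, the same loop-or-injective reduction of $\mathcal P_T^L\npreceq\mathcal P$ and $\mathcal P_T^R\npreceq\mathcal P$ to $T$-freeness, the same middle-color count $|B_y|\le d^-_{G_L(\mathcal P)}(y)\,d^+_{G_R(\mathcal P)}(y)$, the same use of self-converseness for $G_L(\mathcal P)^{\mathrm{rev}}$, and the same lower-bound palette $(x,y,z)\in\A_D\iff yx,yz\in A(D)$. The only difference is cosmetic: you bound $\sum_y a_yb_y$ by AM--GM where the paper uses Cauchy--Schwarz, and this works equally well because both moment sums are at most $\operatorname{ex}_2^+(m,T)$ (your split is the one the paper later uses in its defect identity).
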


The proof associates a palette with two auxiliary digraphs
$G_L(\mathcal P)$ and $G_R(\mathcal P)$ (to be defined later).
The non-existence of homomorphisms
from $\mathcal{P}_T^L$ and $\mathcal{P}_T^R$ forces these two auxiliary digraphs
to be $T$-free, and Cauchy--Schwarz reduces the palette bound to the
corresponding degree-square bound.  The reverse inequality is obtained by
constructing a palette directly from an extremal $T$-free digraph.

We next give the degree-square input needed for the cyclic case.  In
Section~\ref{sec:majorization},  we prove a prefix-majorization principle for convex out-degree moments in $F$-free digraphs.  Applying it to the
Brown--Harary \cite{BrownHarary1970} and Zhou--Li \cite{ZhouLi2023} extremal digraphs for directed cycles gives the
following exact formula.

\begin{theorem}\label{thm:intro-cycles}
Let $\ell\ge 3$, $n=(\ell-1)q+r$ where $0\le r<\ell-1$, and let $\varphi:[0,n-1]\to\mathbb R$ be nondecreasing and convex. Then
\[
\operatorname{ex}_\varphi^+(n,\Cvec{\ell})
=
\sum_{i=1}^{q}
(\ell-1)\,
\varphi\!\left((\ell-1)(q-i+1)-1+r\right)
+
\begin{cases}
r\varphi(r-1), & r>0,\\
0, & r=0.
\end{cases}
\]
\end{theorem}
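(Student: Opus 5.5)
The plan is to combine the prefix-majorization principle of Section~\ref{sec:majorization} with the Brown--Harary / Zhou--Li determination of the ordinary Tur\'an number $\operatorname{ex}(k,\Cvec{\ell})$. We prove a lower bound by an explicit construction and an upper bound by majorization.

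\textbf{Lower bound (construction).} Split the $n$ vertices into an ordered sequence of blocks $B_1,\dots,B_q,R$ with $|B_i|=\ell-1$ and $|R|=r$. Each block is made a complete digraph, with all digons present. In addition, every vertex sends an arc to every vertex of every later block. Between blocks all arcs go forward, so every directed cycle lies inside a single block. A block has at most $\ell-1$ vertices, so the digraph is $\Cvec{\ell}$-free. A vertex of $B_i$ has out-degree
\[
(\ell-2)+(\ell-1)(q-i)+r=(\ell-1)(q-i+1)-1+r,
\]
and a vertex of $R$ has out-degree $r-1$. Summing $\varphi$ over all vertices gives exactly the right-hand side of the theorem. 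Call this out-degree sequence $e_1\ge e_2\ge\dots\ge e_n$.

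\textbf{Upper bound (prefix bounds).} Let $D$ be any $\Cvec{\ell}$-free digraph on $n$ vertices, with out-degrees $d_1\ge\dots\ge d_n$. I would show the weak-majorization inequalities
\[
\sum_{j\le k}d_j\le\sum_{j\le k}e_j\qquad\text{for all }k.
\]
For nondecreasing convex $\varphi$, these give $\sum\varphi(d_j)\le\sum\varphi(e_j)$ by Karamata/Tomi\'c--Weyl; this is the prefix-majorization lemma of Section~\ref{sec:majorization}, which I take as given. To prove the $k$-th inequality, let $S$ be the set of $k$ vertices of largest out-degree. Their out-degree sum counts the arcs inside $S$ plus the arcs from $S$ to $V\setminus S$. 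The first quantity is at most $\operatorname{ex}(k,\Cvec{\ell})$, since $D[S]$ is $\Cvec{\ell}$-free. The second is at most $k(n-k)$. Hence
\[
\sum_{j\le k}d_j\le \operatorname{ex}(k,\Cvec{\ell})+k(n-k).
\]

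In the construction, the first $k$ vertices in block order realize this bound with equality. They send arcs to all $n-k$ later vertices, and the subdigraph they induce is again the block construction on $k$ vertices: full blocks of size $\ell-1$ followed by a partial block. The construction's vertices are listed in nonincreasing out-degree order, so $\sum_{j\le k}e_j$ equals the arc count of that $k$-vertex block digraph plus $k(n-k)$.

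\textbf{Main obstacle.} The step I expect to need the most care is matching $\operatorname{ex}(k,\Cvec{\ell})$, for every $k\le n$ and not only $k=n$, with the arc count of the $k$-vertex block construction. For $\ell=3$ this is Brown--Harary; for general $\ell$ I would invoke Zhou--Li. I would check that their extremal value (in the convention where digons count as two arcs) is
\[
\binom{k}{2}+\lfloor k/(\ell-1)\rfloor\binom{\ell-1}{2}+\binom{k\bmod(\ell-1)}{2},
\]
and that this agrees with the construction. Here I must be careful that the partial block, of size $k\bmod(\ell-1)$, gives the correct count even when it lies inside a full block of the $n$-vertex construction. A secondary point is handling ties in the ordering of $S$, which are harmless because only the prefix sums matter.
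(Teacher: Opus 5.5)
Your proposal is correct and follows the paper's proof essentially step for step. The paper likewise checks that the block chain $\Fchain_{n,\ell-1}$ is $\Cvec{\ell}$-free with the stated out-degrees, and bounds every out-degree prefix sum by $\ex(j,\Cvec{\ell})+j(n-j)$ (Theorem~\ref{thm:majorization}); it shows the chain attains these prefix sums because its first $j$ vertices induce $\Fchain_{j,\ell-1}$ and send all arcs forward, and it concludes with the weak-majorization Lemma~\ref{lem:weak-majorization}. Your arc count $\binom{k}{2}+\lfloor k/(\ell-1)\rfloor\binom{\ell-1}{2}+\binom{k\bmod(\ell-1)}{2}$ agrees with the closed form $\frac12k^2+\frac{\ell-3}{2}k-\frac{s(\ell-1-s)}{2}$ in Theorem~\ref{thm:brown-harary-zhou-li}.
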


For $\varphi(x)=x^2$ and $\ell=3$, Theorem~\ref{thm:intro-cycles} gives
$\operatorname{ex}_2^+(m,\Cvec{3})=\frac{m(m^2-1)}3$.
Combining this with Theorem~\ref{thm:intro-palette} yields the sharp finite
endpoint for the two cyclic-triangle palettes:
$\pal_{\Cvec{3}}(m)=\frac{m(m^2-1)}3$, and $
\max d(\mathcal P)=\frac13-\frac1{3m^2}$, where the maximum is over all $m$-color palettes avoiding both
$\mathcal{P}_{\Cvec{3}}^L$ and $\mathcal{P}_{\Cvec{3}}^R$.  For brevity, in the
cyclic-triangle endpoint discussion we write
$\mathcal{P}_L=\mathcal{P}_{\Cvec{3}}^L$, and $\mathcal{P}_R=\mathcal{P}_{\Cvec{3}}^R$.

We also compute a second exact tournament input.  
Let $TT_\ell$ be the \textit{transitive tournament}
on $\ell$ vertices.
We have
$\operatorname{ex}_2^+(m,TT_3)=m\left\lfloor\frac{m^2}{4}\right\rfloor$, with equality attained by the balanced complete bipartite digraph.

The cyclic-triangle endpoint gives the following consequence for uniform
Tur\'an densities of individual finite $3$-graphs.  Its proof uses Lamaison's
characterization and the palette separation theorem.

\begin{theorem}\label{thm:intro-limit}
For every $m\ge 2$, there exists a finite $3$-uniform hypergraph $H_m$ such that
\[
\frac13-\frac1{3m^2}
\le
\piu(H_m)
\le
\frac13.
\]
Consequently, the densities $\piu(H_m)$ converge to $1/3$ as $m\to\infty$.
\end{theorem}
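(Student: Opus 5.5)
Our plan is to realize $H_m$ through the palette separation theorem of Kr\'al', Ku\v{c}er\'ak, Lamaison and Tardos~\cite{KKLT2025}. We look for a finite $3$-graph that is \emph{not} colorable by either of the palettes $\mathcal P_L=\mathcal P^L_{\Cvec{3}}$, $\mathcal P_R=\mathcal P^R_{\Cvec{3}}$, but \emph{is} colorable by one fixed palette $\mathcal Q$ of density $1/3$. The two inequalities then come from opposite sides of this choice.

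For the \textbf{upper bound} we use a density-$1/3$ palette. A natural candidate is the cyclic palette on colors $\{0,1,2\}$, with admissible triples $(a,b,c)$ satisfying $c-a\equiv 1$ (or a similar rule). The count gives $9$ admissible triples out of $27$, so the density is exactly $1/3$. Any $\mathcal Q$-colorable $H$ satisfies $\piu(H)\le 1/3$. This follows from Lamaison's characterization, or more simply from the fact that the standard random construction from $\mathcal Q$ contains every $\mathcal Q$-colorable $H$: it is uniformly dense with density $1/3$, so every denser uniformly dense hypergraph contains a copy of $H$. What we must check is that $\mathcal P_L\npreceq\mathcal Q$ and $\mathcal P_R\npreceq\mathcal Q$ fail in the right direction for separation. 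Concretely, the separation theorem should demand that no palette of the "forbidden side" be dominated by the "allowed side" in a suitable combinatorial sense. So the first concrete step is to state the separation criterion precisely and verify it for the pair $(\{\mathcal Q\},\{\mathcal P_L,\mathcal P_R\})$. If the cyclic palette fails, we would instead choose $\mathcal Q$ to be a palette of density at most $1/3$ that avoids both $\mathcal P_L$ and $\mathcal P_R$ in the homomorphism sense, such as a transitive-tournament-type palette.

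For the \textbf{lower bound}, Theorem~\ref{thm:intro-palette} together with the cyclic computation $\operatorname{ex}_2^+(m,\Cvec3)=m(m^2-1)/3$ supplies an $m$-color palette $\mathcal P_m^\star$ with $|\A|=m(m^2-1)/3$. It satisfies $\mathcal P_L\npreceq\mathcal P_m^\star$ and $\mathcal P_R\npreceq\mathcal P_m^\star$, and its density is $\frac13-\frac1{3m^2}$. We need $H_m$ not to be $\mathcal P_m^\star$-colorable. The separation theorem should allow us to impose non-colorability by the finitely many palettes $\mathcal P_m^\star$ together with $\mathcal P_L,\mathcal P_R$, while keeping $\mathcal Q$-colorability. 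The standard probabilistic construction then gives $\piu(H_m)\ge d(\mathcal P_m^\star)$. Finally, letting $m\to\infty$ squeezes $\piu(H_m)$ to $1/3$.

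The main obstacle is the hypothesis check in the separation theorem. We must show that no palette on the "non-colorable" list is "forced" by $\mathcal Q$. In the KKLT criterion this roughly amounts to showing that $\mathcal P_m^\star$ does not admit a homomorphism from $\mathcal Q$, or more precisely from the relevant closure or product of the allowed palettes. We would try to verify this by exploiting the left/right structure. A homomorphism out of $\mathcal Q$ would produce a $\Cvec3$ configuration in the auxiliary digraph $G_L(\mathcal P_m^\star)$ or $G_R(\mathcal P_m^\star)$, and by construction these digraphs are $\Cvec3$-free. Choosing $\mathcal Q$ so that it itself contains a copy of $\mathcal P_L$-type structure is what makes this argument work. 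This also suggests that a cleaner choice is $\mathcal Q$ with $\mathcal P_L\preceq\mathcal Q$, rather than the reverse, so the verification of this direction is where we would concentrate effort.
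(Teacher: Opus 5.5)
\textbf{There is a genuine gap, in the upper bound.} You claim that any $\mathcal Q$-colorable $H$ satisfies $\piu(H)\le 1/3$ because $d(\mathcal Q)=1/3$. This is false: colorability by one palette gives no upper bound. By Lamaison's theorem, $\piu(H)\le 1/3$ requires that \emph{every} palette of density greater than $1/3$ colors $H$. Knowing that one particular density-$1/3$ construction contains $H$ says nothing about denser uniformly dense hypergraphs.

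For a concrete failure, take a $3$-color palette with $9$ admissible triples including $(a,a,a)$. It has density $1/3$ and colors every $3$-graph, so your claim would bound $\piu$ of every $3$-graph by $1/3$. Yet the density-$1/2$ palette $(\{r,b\},\{(x,y,z):x\ne y\})$ does not color $K_4^{(3)}$, since the three pairs at the first vertex would need pairwise distinct colors. Hence $\piu(K_4^{(3)})\ge 1/2$.

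Your design of $H_m$ is also inverted. You require $H_m$ \emph{not} to be $\mathcal P_L$- or $\mathcal P_R$-colorable, and that discards the only mechanism available for the upper bound.

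\textbf{The missing idea: use the sharp endpoint itself for the upper bound.} By Corollary~\ref{cor:sharp-contrapositive}, every $s$-color palette $\mathcal Q$ with $d(\mathcal Q)>1/3>\frac13-\frac1{3s^2}$ satisfies $\mathcal P_L\preceq\mathcal Q$ or $\mathcal P_R\preceq\mathcal Q$. So take $H_m$ to be $\mathcal P_R$-colorable. It is then also $\mathcal P_L$-colorable, by Lemma~\ref{lem:reverse-colorable} and $\revop(\mathcal P_R)\cong\mathcal P_L$. Remark~\ref{rem:compose-palettes} now shows that every palette of density $>1/3$ colors $H_m$, and Theorem~\ref{thm:lamaison} gives $\piu(H_m)\le 1/3$.

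The lower bound comes from Theorem~\ref{thm:separation} applied to the single pair $(\mathcal P_R,\mathcal P_m^\star)$. The hypotheses are $\mathcal P_R\npreceq\mathcal P_m^\star$ and $\mathcal P_R\npreceq\revop(\mathcal P_m^\star)$, and the latter is equivalent to $\mathcal P_L\npreceq\mathcal P_m^\star$. Both follow from Lemma~\ref{lem:palette-auxiliary}, because $G_R(\mathcal P_m^\star)=J$ and $G_L(\mathcal P_m^\star)=J^{\mathrm{rev}}$ are $\Cvec{3}$-free.

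The heuristic in your last paragraph is exactly this check, but it works only when the colorable palette is $\mathcal P_R$ itself, not an auxiliary density-$1/3$ palette $\mathcal Q$. No ``closure or product'' version of the separation theorem is needed.
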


Thus Theorem~\ref{thm:intro-limit} gives a sequence of single forbidden $3$-graphs whose uniform Tur\'an densities converge to $1/3$. This statement is deliberately separate from the accumulation-point statement below: the Lagrangian calculation gives exact values converging to $1/3$ for finite forbidden families.

\subsection{Stability and Lagrangian consequences}

The equality analysis at the cyclic-triangle endpoint also gives structural
information.  We prove that the only extremal palettes are the endpoint
palettes coming from ordered digon chains, and we obtain a qualitative
edit-distance stability theorem.  More precisely, let
\[
E_m=\frac{m(m^2-1)}3=\operatorname{ex}_2^+(m,\Cvec{3}).
\]
For an ordered digon chain $J$ on a color set $\C$, the endpoint palette
$\mathcal{P}_m^\star(J)$ is defined by
\[
(x,y,z)\in\A(\mathcal{P}_m^\star(J))
\quad\Longleftrightarrow\quad
 y\to x\text{ and }y\to z\text{ in }J.
\]
When $J=\Fchain_{m,2}$ carries its standard ordering, we write simply
$\mathcal{P}_m^\star$.  The stability result, proved as
Theorem~\ref{thm:edit-stability}, says that if an $m$-color palette
$\mathcal P=(\C,\A)$ avoids both $\mathcal{P}_L$ and $\mathcal{P}_R$ and
satisfies
\[
|\A|\ge E_m-\varepsilon m^3,
\]
then, after choosing $\varepsilon$ sufficiently small in terms of a prescribed
$\eta>0$, the palette $\mathcal P$ differs in at most $\eta m^3$ admissible
triples from some endpoint palette $\mathcal{P}_m^\star(J)$.

Finally, we compute the palette Lagrangian of the endpoint palette.  
For a
palette $\mathcal P=(\C,\A)$, let
\[
\lambda_{\mathcal P}(\mathbf{x})
=
\sum_{(i,j,k)\in\A}x_i x_j x_k,
\qquad \mathbf{x}=(x_i)_{i\in\C},
\]
and let $\Lambda(\mathcal P)$ be the maximum of this polynomial subject to
$\sum_{i\in\C}x_i=1$ and $x_i\in[0,1]$ for all $i\in\C$.  We prove that, for
every $m\ge2$,
\[
\Lambda(\mathcal{P}_m^\star)=\frac13-\frac1{3m^2},
\]
and the unique maximizer is the uniform probability vector on the
$m$ colors. Together with a theorem of King, Piga, Sales and Sch\"ulke
\cite[Theorem~1.1]{KingPigaSalesSchulke2025}, this implies that for every $m\ge2$ there is a finite family $\mathcal F_m$ of
$3$-graphs such that
\[
\piu(\mathcal F_m)=\frac13-\frac1{3m^2}.
\]
Consequently, $1/3$ is an accumulation point of uniform Tur\'an densities of
finite forbidden families.

\subsection{Organization}

In Section~\ref{sec:majorization}, we prove the majorization principle for
convex out-degree moments in $F$-free digraphs.  In Section~\ref{sec:cycles},
we apply this principle to directed cycles.  In Section~\ref{sec:palettes},
we recall palettes, palette homomorphisms, left and right palettes, and the
auxiliary digraphs $G_L(\mathcal P)$ and $G_R(\mathcal P)$.  In
Section~\ref{sec:self-converse}, we prove the self-converse tournament
identity $\pal_T(m)=\operatorname{ex}_2^+(m,T)$; in
Subsection~\ref{subsec:transitive-triangle}, we compute the transitive-triangle
case.  In Section~\ref{sec:endpoint}, we specialize the result to the cyclic
triangle, characterize equality at the sharp palette endpoint, and prove
Theorem~\ref{thm:intro-limit}.  In Section~\ref{sec:edit-dist}, we prove the
edit-distance stability theorem; the proof uses a compactness argument via
digraph limits.  In Section~\ref{sec:lagrangian}, we compute the Lagrangian of
$\mathcal{P}_m^\star$ and deduce the accumulation-point consequence for finite
forbidden families.  We end with open problems in
Section~\ref{sec:questions}.

\section{Degree-majorization for digraphs}\label{sec:majorization}

For a loopless digraph $F$, let
\[
\ex(n,F)=\max\{|A(D)|: |V(D)|=n,\; D\text{ is }F\text{-free}\}
\]
denote the \textit{ordinary Tur\'an number}. For a nondecreasing convex function $\varphi:[0,n-1]\to \R$, define
\[
\operatorname{ex}_{\varphi}^{+}(n,F)
=
\max\left\{
\sum_{v\in V(D)}\varphi(d_D^+(v)):
|V(D)|=n,\; D\text{ is }F\text{-free}
\right\}.
\]
In particular,
$\operatorname{ex}_2^+(n,F)=\operatorname{ex}_{x^2}^{+}(n,F)$.

For disjoint vertex sets $X,Y\subseteq V(D)$, let
$e_D^+(X,Y)=|\{xy\in A(D):x\in X,\ y\in Y\}|$.
\begin{lemma}\label{lem:weak-majorization}
Let $a_1\ge a_2\ge \cdots\ge a_n\ge0$ and $b_1\ge b_2\ge \cdots\ge b_n\ge0$ be two nonincreasing sequences of integers bounded by $M$. Suppose that $\sum_{i=1}^j a_i\le \sum_{i=1}^j b_i$ for every $1\le j\le n$.
If $\varphi:[0,M]\to\R$ is nondecreasing and convex, then
\[
\sum_{i=1}^n \varphi(a_i)
\le
\sum_{i=1}^n \varphi(b_i).
\]
\end{lemma}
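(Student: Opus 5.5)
This is the classical weak-majorization inequality of Hardy--Littlewood--P\'olya, Tomi\'c and Weyl; I will prove it directly by Abel summation. Write $A_j=\sum_{i\le j}a_i$ and $B_j=\sum_{i\le j}b_i$, with $A_0=B_0=0$, so that $A_j\le B_j$ for all $j$. For each $i$ define the slope
\[
c_i=\frac{\varphi(b_i)-\varphi(a_i)}{b_i-a_i}\quad (a_i\ne b_i),
\]
and when $a_i=b_i$ take $c_i$ to be a one-sided derivative of $\varphi$ at $a_i$. At an endpoint of $[0,M]$ this derivative may be infinite, so there I instead pick any slope value compatible with the monotonicity established in the next step. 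In every case $\varphi(b_i)-\varphi(a_i)=c_i(b_i-a_i)$.

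\textbf{Step 1: the slopes are nonnegative and nonincreasing.} Since $\varphi$ is nondecreasing, each $c_i\ge 0$. Since $\varphi$ is convex, the slope of a chord is nondecreasing in each endpoint. Both sequences are nonincreasing, so $a_i\ge a_{i+1}$ and $b_i\ge b_{i+1}$. Moving both endpoints down therefore cannot increase the chord slope, giving $c_i\ge c_{i+1}$. The degenerate cases $a_i=b_i$ need care, and this is the main (though routine) obstacle. Handle them by using one-sided derivatives consistently: take left derivatives where the chord would degenerate. An alternative is to perturb $\varphi$ to a strictly convex function, or to smooth it on a slightly larger interval, and pass to the limit. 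Note that at the left endpoint $0$ the derivative from the right is finite, because $\varphi$ is nondecreasing there. At the right endpoint $M$, if $a_i=b_i=M$ the term contributes $0$ whatever $c_i$ is. So I can simply set $c_i$ equal to the neighbouring value $c_{i-1}$, or to a large finite number if $i=1$, without breaking monotonicity.

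\textbf{Step 2: Abel summation.} Now
\[
\sum_{i=1}^n\bigl(\varphi(b_i)-\varphi(a_i)\bigr)=\sum_{i=1}^n c_i\bigl((B_i-A_i)-(B_{i-1}-A_{i-1})\bigr).
\]
Summing by parts, this equals
\[
\sum_{i=1}^{n-1}(c_i-c_{i+1})(B_i-A_i)+c_n(B_n-A_n).
\]
Every factor here is nonnegative: $c_i-c_{i+1}\ge 0$ and $c_n\ge 0$ by Step 1, and $B_i-A_i\ge 0$ by hypothesis. Hence the sum is at least $0$, which is exactly the claimed inequality.

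Integrality of the sequences is not actually needed. It only ensures that all values lie in the domain $[0,M]$ of $\varphi$.
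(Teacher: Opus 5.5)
Your proof is correct, but it takes a different route from the paper. You use the classical Tomi\'c--Weyl argument. You take chord slopes $c_i$ with $\varphi(b_i)-\varphi(a_i)=c_i(b_i-a_i)$, derive $c_1\ge\cdots\ge c_n\ge0$ from convexity and monotonicity of $\varphi$, and sum by parts against the prefix gaps $B_i-A_i\ge0$.

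The paper never looks at slopes. It uses integrality to expand $\varphi$ on $\{0,1,\dots,M\}$ as $\varphi(0)+\alpha x+\sum_{t=1}^{M-1}\beta_t(x-t)_+$, where $\alpha\ge0$ and the $\beta_t\ge0$ are discrete second differences. It then transfers the prefix inequalities to each hinge sum through the identity $\sum_i(c_i-t)_+=\max_{0\le j\le n}\bigl(\sum_{i\le j}c_i-jt\bigr)$, valid for nonincreasing $(c_i)$.

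\textbf{What your route buys.} It is more general: it works verbatim for real entries in $[0,M]$. Your closing remark misplaces the role of integrality, though. It is the boundedness hypothesis that keeps the entries in the domain; in the paper, integrality is exactly what makes the finite hinge expansion available.

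\textbf{The price.} You pay with the degenerate-slope bookkeeping, which you handle adequately. Left derivatives at interior points and the right derivative at $0$ make the extended slope function monotone in each variable away from the corner $(M,M)$. The indices with $a_i=b_i=M$ form an initial segment $\{1,\dots,k\}$. There you only need to give $c_1,\dots,c_k$ a common finite value at least $c_{k+1}$, for instance $c_{k+1}$ itself. Copying $c_{i-1}$ is not what secures monotonicity; the constraint comes from the right neighbour.

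\textbf{What the paper's route buys.} It is derivative-free, and its decomposition is reused directly for the equality case in Proposition~\ref{prop:c3-digraph-equality}. Since $x^2=x+2\sum_t(x-t)_+$ has strictly positive coefficients, equality forces equality of every hinge sum, and hence of the sorted sequences. Reading the equality case off your defect $\sum_{i<n}(c_i-c_{i+1})(B_i-A_i)+c_n(B_n-A_n)$ would take additional work. The same hinge idea also drives the continuous version, Lemma~\ref{lem:continuous-majorization}.
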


\begin{proof}
If $M=0$, then all entries are equal to $0$, and the statement is
immediate. Hence assume $M\ge1$.
For a nonincreasing sequence $c_1\ge c_2\ge\cdots\ge c_n$ and a real number $t$,
\[
\sum_{i=1}^n (c_i-t)_+
=
\max_{0\le j\le n}
\left(\sum_{i=1}^j c_i-jt\right).
\]
The prefix inequalities therefore imply
\[
\sum_i (a_i-t)_+\le \sum_i (b_i-t)_+
\qquad\text{for every }t.
\]
Since the entries are integers, every convex function on $\{0,1,\ldots,M\}$ can be written as
\[
\varphi(x)=\varphi(0)+\alpha x+
\sum_{t=1}^{M-1}\beta_t(x-t)_+,
\]
where $\alpha=\varphi(1)-\varphi(0)\ge0$ and $\beta_t=\varphi(t+1)-2\varphi(t)+\varphi(t-1)\ge0$. Taking $j=n$ in the prefix inequalities gives $\sum_i a_i\le \sum_i b_i$. Combining this with the inequalities for $(x-t)_+$ proves the lemma.
\end{proof}

\begin{theorem}\label{thm:majorization}
Let $F$ be a loopless digraph and let $n\ge1$. Suppose that there is an $F$-free digraph $G_n$ on $n$ vertices with nonincreasing out-degree sequence $b_1^{(n)}\ge b_2^{(n)}\ge\cdots\ge b_n^{(n)}$
such that
\begin{equation}\label{eq:prefix-extremal}
\sum_{i=1}^{j} b_i^{(n)}=\ex(j,F)+j(n-j)
\qquad\text{for every }1\le j\le n.
\end{equation}
Then, for every nondecreasing convex function $\varphi:[0,n-1]\to\R$,
$\operatorname{ex}_{\varphi}^{+}(n,F)=\sum_{i=1}^{n}\varphi(b_i^{(n)})$.
\end{theorem}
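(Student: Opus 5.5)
The lower bound is immediate: $G_n$ is itself an $F$-free digraph on $n$ vertices, so $\operatorname{ex}_\varphi^+(n,F)\ge\sum_i\varphi(b_i^{(n)})$. For the upper bound we take an arbitrary $F$-free digraph $D$ on $n$ vertices and order its out-degrees nonincreasingly as $a_1\ge\cdots\ge a_n$. We then reduce to Lemma~\ref{lem:weak-majorization} with $M=n-1$. All entries are integers in $[0,n-1]$, since digraphs are loopless and each ordered pair carries at most one arc. So it suffices to prove the prefix inequalities
\[
\sum_{i=1}^j a_i\le \ex(j,F)+j(n-j)=\sum_{i=1}^j b_i^{(n)}\qquad(1\le j\le n).
\]

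To prove this prefix bound, fix $j$ and let $X\subseteq V(D)$ be a set of $j$ vertices realizing the $j$ largest out-degrees. Every arc leaving a vertex of $X$ ends either inside $X$ or in $V(D)\setminus X$, which gives
\[
\sum_{i=1}^j a_i=\sum_{x\in X}d_D^+(x)=|A(D[X])|+e_D^+(X,V(D)\setminus X).
\]
The induced subdigraph $D[X]$ is $F$-free, because any copy of $F$ in $D[X]$ would be a copy in $D$. Hence $|A(D[X])|\le\ex(j,F)$. For the cross term, each ordered pair $(x,y)$ with $x\in X$ and $y\notin X$ contributes at most one arc, so $e_D^+(X,V(D)\setminus X)\le j(n-j)$. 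Adding these two bounds and using the hypothesis \eqref{eq:prefix-extremal} gives the prefix inequality.

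With the prefix inequalities in hand, Lemma~\ref{lem:weak-majorization} yields $\sum_v\varphi(d_D^+(v))\le\sum_i\varphi(b_i^{(n)})$. Taking the maximum over all $F$-free $D$ completes the proof. There is no genuinely hard step here; the only point needing care is checking the lemma's hypotheses. Both sequences must be nonincreasing, which holds because $b^{(n)}$ is given in that order and $a$ was sorted. Both must be bounded by $M=n-1$, which matches the domain $[0,n-1]$ of $\varphi$. The edge case $n=1$ is trivial, since then all sequences are zero.
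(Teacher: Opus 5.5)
Your proposal is correct and follows the paper's proof essentially step for step. You bound each prefix sum of the sorted out-degrees by $|A(D[X])|+e_D^+(X,V(D)\setminus X)\le \ex(j,F)+j(n-j)$, apply Lemma~\ref{lem:weak-majorization}, and note that $G_n$ attains the bound; your explicit check of the lemma's hypotheses (integrality, and $M=n-1$ matching the domain of $\varphi$) is a detail the paper leaves implicit.
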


\begin{proof}
Let $D$ be an $F$-free digraph on $n$ vertices, and write its out-degrees in nonincreasing order as $d_1^+\ge d_2^+\ge\cdots\ge d_n^+$.
For $1\le j\le n$, let $U$ be a set of $j$ vertices of largest out-degree. Then
\[
\sum_{i=1}^{j} d_i^+
=
|A(D[U])|+e_D^+(U,V(D)\setminus U)
\le
\ex(j,F)+j(n-j)
=
\sum_{i=1}^{j} b_i^{(n)}
\] by \eqref{eq:prefix-extremal}.
Thus the out-degree sequence of $D$ is weakly majorized by that of $G_n$. Lemma~\ref{lem:weak-majorization} gives
\[
\sum_{v\in V(D)}\varphi(d_D^+(v))
\le
\sum_{i=1}^{n}\varphi(b_i^{(n)}).
\]
The digraph $G_n$ attains equality, proving the theorem.
\end{proof}

\begin{remark}\label{rem:in-degree}
If $F$ is isomorphic to its reverse, then the same bound holds for in-degree moments. Indeed, applying Theorem~\ref{thm:majorization} to the reverse of an $F$-free digraph bounds $\sum_v\varphi(d_D^-(v))$ by the same expression.
\end{remark}

\section{Directed cycles}\label{sec:cycles}

Let $\Cvec{\ell}$ be a directed cycle of length $\ell$. For $\ell\ge3$ and $n\ge1$, write $n=(\ell-1)q+r$ where $0\le r<\ell-1$.
Let $\Fchain_{n,\ell-1}$ be the ordered complete-block chain obtained by taking $q$ complete digraph blocks of size $\ell-1$, one final block of size $r$ if $r>0$, and orienting every inter-block arc from earlier blocks to later blocks. A complete digraph block means that both arcs are present between every pair of distinct vertices in the block.

\begin{theorem}[Brown--Harary,~\cite{BrownHarary1970}; Zhou--Li,~\cite{ZhouLi2023}]\label{thm:brown-harary-zhou-li}
For every $\ell\ge3$ and every $j\ge1$, if $j=(\ell-1)q'+s$ with $0\le s<\ell-1$, then
\[
\ex(j,\Cvec{\ell})
=
|A(\Fchain_{j,\ell-1})|
=
\frac12j^2+\frac{\ell-3}{2}j-
\frac{s(\ell-1-s)}{2}.
\]
The case $\ell=3$ is due to Brown and Harary, and the general directed-cycle theorem is due to Zhou and Li.
\end{theorem}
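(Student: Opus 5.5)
The lower bound is a direct check on $\Fchain_{j,\ell-1}$, and the upper bound will go by induction on $j$ through the strong-component decomposition. For the lower bound, any directed cycle in $\Fchain_{j,\ell-1}$ must stay inside a single block, because every inter-block arc points from an earlier block to a later one. Each block has at most $\ell-1<\ell$ vertices, so $\Fchain_{j,\ell-1}$ is $\Cvec{\ell}$-free. For the arc count, every pair of vertices contributes one arc, and every pair inside a block contributes one extra arc. Writing $j-s=(\ell-1)q'$, this gives
\[
|A(\Fchain_{j,\ell-1})|=\binom{j}{2}+\frac{(j-s)(\ell-2)}{2}+\binom{s}{2},
\]
which simplifies to $\frac12j^2+\frac{\ell-3}{2}j-\frac{s(\ell-1-s)}2$. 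So $\ex(j,\Cvec{\ell})\ge |A(\Fchain_{j,\ell-1})|$.

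For the upper bound, write $f(j)$ for the claimed value and prove $|A(D)|\le f(j)$ for every $\Cvec{\ell}$-free $D$ on $j$ vertices, by induction on $j$. The first reduction is to strongly connected digraphs. Suppose $D$ is not strongly connected. Then $V(D)$ splits into $X\sqcup Y$ with no arc from $Y$ to $X$, so at most $|X||Y|$ arcs cross between the parts. Induction gives $|A(D)|\le f(|X|)+f(|Y|)+|X||Y|$. It then suffices to check the elementary inequality $f(a)+f(b)+ab\le f(a+b)$. Up to the quadratic terms this reduces to $g(s_a)+g(s_b)\ge g(s_{a+b})$ with $g(s)=s(\ell-1-s)/2$ and the residues taken mod $\ell-1$. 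This holds by a short case check on whether $s_a+s_b$ wraps around modulo $\ell-1$.

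The remaining case, which is the real content of Zhou--Li, is $D$ strongly connected, and the goal there is $|A(D)|\le f(j)$. If $j\le \ell-1$ the bound $j(j-1)=f(j)$ is trivial. For $j\ge \ell$, the plan is to exploit that a strongly connected digraph with many arcs contains long cycles, and to show that too many digons force a cycle of length exactly $\ell$. For $\ell=3$ I would argue directly, following Brown--Harary. The excess $|A(D)|-\binom{j}{2}$ equals the number of digons minus the number of non-adjacent pairs. If $xy$ and $yz$ are both digons, then any arc between $x$ and $z$ closes a directed triangle, so $x,z$ are non-adjacent. Charging each such non-adjacent pair to the corresponding digon paths shows that the digons essentially form a matching, which gives excess $\le\lfloor j/2\rfloor$.

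For general $\ell$ I would take a longest cycle $C$ in $D$. If $|C|\ge\ell$, then chords of $C$ (and in particular digons along $C$) are heavily constrained, since many chords produce a shortcut cycle of length exactly $\ell$; this bounds the arcs within $V(C)$. An ear-type argument then attaches the remaining vertices and bounds the arcs incident to them. I expect this strongly connected case, namely controlling chords so that no cycle of length exactly $\ell$ appears, to be the main obstacle. If the direct argument becomes unwieldy, I would simply invoke Zhou--Li for it, as the statement does.
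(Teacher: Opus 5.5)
The paper gives no proof of this statement. It is quoted from Brown--Harary ($\ell=3$) and Zhou--Li (general $\ell$). The only part the paper checks itself is that $\Fchain_{n,\ell-1}$ is $\Cvec{\ell}$-free, in Lemma~\ref{lem:block-chain}, and it uses the same ``every cycle stays inside a block'' argument you give. Your arc count is correct. Your reduction to strongly connected digraphs is also correct: $f(a)+f(b)+ab-f(a+b)=g(s_a)+g(s_b)-g(s_{a+b})$, and using $g(x)+g(y)-g(x+y)=xy$ together with $g(x)=g(\ell-1-x)$, this equals $s_as_b$ when $s_a+s_b<\ell-1$ and $(\ell-1-s_a)(\ell-1-s_b)$ otherwise.

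\textbf{The general case.} As a self-contained proof, the upper bound has a genuine gap, and it is where you expected it. For $\ell\ge4$ the strongly connected case is the entire theorem, and the longest-cycle plan supplies no mechanism for it.
\begin{enumerate}[label=(\roman*)]
\item It says nothing about strongly connected digraphs whose longest cycle is shorter than $\ell$.
\item The heuristic that ``many chords produce a shortcut cycle of length exactly $\ell$'' is false as stated. For odd $\ell$, take the balanced complete bipartite digraph, with both arcs across the parts as in Theorem~\ref{thm:transitive-triangle-square}. It has $2\lfloor j^2/4\rfloor$ arcs and cycles of every even length up to $2\lfloor j/2\rfloor$, so a longest cycle has $\Theta(j^2)$ chords. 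Yet it contains no odd cycle at all.
\end{enumerate}
So a successful argument has to use the exact linear term $\frac{\ell-3}{2}j$, not just chord density. For $\ell\ge4$ your upper bound therefore rests on citing Zhou--Li, exactly as the paper does. The strong-component reduction is valid but does not make the problem easier.

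\textbf{The case $\ell=3$.} Your sketch needs repair. The claim that the digons ``essentially form a matching'' is false. The same complete bipartite digraph is $\Cvec{3}$-free with $\lfloor j^2/2\rfloor$ arcs, and its digon graph is $K_{\lfloor j/2\rfloor,\lceil j/2\rceil}$. What is true is the inequality $\#\text{digons}-\#\text{non-adjacent pairs}\le\lfloor j/2\rfloor$. The clean way to get it is the standard induction, which also avoids any need for strong connectivity:
\begin{enumerate}[label=(\roman*)]
\item If $D$ has no digon, then $|A(D)|\le\binom{j}{2}\le\lfloor j^2/2\rfloor$.
\item If $xy$ is a digon, each other vertex $z$ is joined to $\{x,y\}$ by at most two arcs, since both $\{zx,yz\}$ and $\{xz,zy\}$ close a directed triangle.
\item Deleting $x$ and $y$ and applying induction gives $|A(D)|\le 2+2(j-2)+\lfloor (j-2)^2/2\rfloor=\lfloor j^2/2\rfloor$.
\end{enumerate}
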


\begin{lemma}\label{lem:block-chain}
The digraph $\Fchain_{n,\ell-1}$ is $\Cvec{\ell}$-free. Moreover, its
nonincreasing out-degree sequence
$b_1\ge b_2\ge \cdots\ge b_n$
is given by
\[
b_{(i-1)(\ell-1)+a}
=
(\ell-1)(q-i+1)-1+r
\]
for $1\le i\le q$ and $1\le a\le \ell-1$, together with $r$ final terms
equal to $r-1$ if $r>0$. Finally, for every $1\le j\le n$,
\[
\sum_{i=1}^{j} b_i
=
\ex(j,\Cvec{\ell})+j(n-j).
\]
\end{lemma}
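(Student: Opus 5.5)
The proof has three parts, taken in the order of the statement.

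\emph{$\Cvec{\ell}$-freeness.} Every directed cycle in $\Fchain_{n,\ell-1}$ lies inside a single block, because inter-block arcs only go from earlier blocks to later ones and so no cycle can leave a block and come back. Each block has at most $\ell-1$ vertices, so it contains no directed cycle of length $\ell$.

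\emph{Out-degrees.} A vertex in full block $i$ (with $1\le i\le q$) sends $\ell-2$ arcs inside its own block. It also sends arcs to every vertex of the later blocks, of which there are $(\ell-1)(q-i)+r$. Its out-degree is therefore
\[
\ell-2+(\ell-1)(q-i)+r=(\ell-1)(q-i+1)-1+r.
\]
A vertex in the final block of size $r$ has out-degree $r-1$. These values strictly decrease with the block index, and $r-1$ is smaller than the value for $i=q$, namely $\ell-2+r$. So listing the vertices block by block gives the nonincreasing sequence claimed.

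\emph{Prefix identity.} Let $U$ be the first $j$ vertices in the block order; this is the $j$-vertex prefix of the sorted sequence, so $\sum_{i\le j}b_i$ equals the sum of out-degrees over $U$. Every arc from $U$ to its complement is present, since these arcs go either to later blocks or inside the block containing the cut. Hence
\[
\sum_{i\le j}b_i=|A(D[U])|+j(n-j).
\]
The induced subdigraph $D[U]$ consists of complete blocks of size $\ell-1$ followed by one partial block of size $s$, with all arcs going forward. This is exactly $\Fchain_{j,\ell-1}$, because $j=(\ell-1)q'+s$ matches the block decomposition of the prefix. By Theorem~\ref{thm:brown-harary-zhou-li}, $|A(\Fchain_{j,\ell-1})|=\ex(j,\Cvec{\ell})$, which proves the identity.

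The main point needing care is that last identification. The cut may fall inside a full block or inside the final $r$-block. In both cases the prefix is a chain of complete $(\ell-1)$-blocks followed by one partial block, so the induced subdigraph is exactly the canonical chain $\Fchain_{j,\ell-1}$. No alternative counting against the formula in Theorem~\ref{thm:brown-harary-zhou-li} is needed; one only invokes its first equality.
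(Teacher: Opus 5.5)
Your proof is correct and follows essentially the same route as the paper's proof. Both arguments confine cycles to single blocks, count out-degrees block by block, and obtain the prefix identity by identifying the $j$-vertex prefix with $\Fchain_{j,\ell-1}$ and noting that every arc from the prefix to its complement is present. Your explicit check of where the cut can fall (inside a full block or inside the final $r$-block) only spells out what the paper states more briefly.
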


\begin{proof}
Any directed cycle using vertices from more than one block would need an
inter-block arc from a later block to an earlier block, but no such arc
exists. Hence every directed cycle lies inside a single block, whose size is
strictly smaller than $\ell$. Thus $\Fchain_{n,\ell-1}$ is
$\Cvec{\ell}$-free.

The degree formula follows directly from the construction. A vertex in the
$i$-th full block sends arcs to $\ell-2$ vertices inside its block, to
$(\ell-1)(q-i)$ vertices in later full blocks, and to $r$ vertices in the
final block. Its out-degree is therefore
\[
(\ell-2)+(\ell-1)(q-i)+r
=
(\ell-1)(q-i+1)-1+r.
\]
A vertex in the final block sends arcs only to the other $r-1$ vertices of
that block. These values are nonincreasing in the block order.

For the prefix identity, let $U_j$ be the first $j$ vertices of
$\Fchain_{n,\ell-1}$ in the block order. Then
$\Fchain_{n,\ell-1}[U_j]$ is isomorphic to $\Fchain_{j,\ell-1}$, and hence
Theorem~\ref{thm:brown-harary-zhou-li} gives
\[
|A(\Fchain_{n,\ell-1}[U_j])|=\ex(j,\Cvec{\ell}).
\]
Every arc from $U_j$ to its complement is present: this is immediate for
later blocks, and it also holds when $U_j$ cuts through a complete block.
Therefore
\[
\sum_{v\in U_j} d_{\Fchain_{n,\ell-1}}^+(v)
=
\ex(j,\Cvec{\ell})+j(n-j).
\]
Since the vertices of $U_j$ are exactly the $j$ vertices of largest
out-degree, this proves the claimed prefix identity.
\end{proof}

We now prove Theorem~\ref{thm:intro-cycles}.
\begin{proof}[Proof of Theorem~\ref{thm:intro-cycles}]
Lemma~\ref{lem:block-chain} verifies the hypothesis of
Theorem~\ref{thm:majorization} for $F=\Cvec{\ell}$ and
$G_n=\Fchain_{n,\ell-1}$. The displayed formula follows from the degree
sequence in Lemma~\ref{lem:block-chain}.
\end{proof}

\begin{corollary}\label{cor:cycle-square}
Let $\ell\ge3$, and write $n=(\ell-1)q+r$, where $0\le r<\ell-1$. Then
\[
\operatorname{ex}_2^+(n,\Cvec{\ell})
=
\sum_{i=1}^{q}
(\ell-1)\left((\ell-1)(q-i+1)-1+r\right)^2
+
\begin{cases}
r(r-1)^2, & r>0,\\
0, & r=0.
\end{cases}
\]
In particular, $\operatorname{ex}_2^+(n,\Cvec{3})=\frac{n(n^2-1)}{3}$.
\end{corollary}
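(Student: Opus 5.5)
The plan is to specialize Theorem~\ref{thm:intro-cycles}, already established via Lemma~\ref{lem:block-chain} and Theorem~\ref{thm:majorization}, to $\varphi(x)=x^2$. This choice is admissible: $x^2$ is convex on $[0,n-1]$, and it is nondecreasing there because $n-1\ge 0$. Substituting $\varphi(x)=x^2$ into the displayed formula gives the general statement of the corollary verbatim. The first term is $(\ell-1)$ times the square of $(\ell-1)(q-i+1)-1+r$, summed over the $q$ full blocks. The second term $r(r-1)^2$ comes from the final block and is present exactly when $r>0$. So nothing beyond substitution is needed for the general formula.

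For the special case $\ell=3$, I would write $n=2q+r$ with $r\in\{0,1\}$. The full-block out-degrees are $2(q-i+1)-1+r$. Reindexing with $k=q-i+1$, the sum becomes $2\sum_{k=1}^{q}(2k-1+r)^2$, and the final-block term $r(r-1)^2$ vanishes in both cases.

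If $r=0$, then $n=2q$ and the sum is
\[
2\sum_{k=1}^q(2k-1)^2=\frac{2q(2q-1)(2q+1)}{3}=\frac{n(n^2-1)}{3}.
\]
If $r=1$, then $n=2q+1$ and the sum is
\[
2\sum_{k=1}^q(2k)^2=\frac{4q(q+1)(2q+1)}{3}.
\]
Since $n^2-1=4q(q+1)$, this also equals $\frac{n(n^2-1)}{3}$.

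The only mildly delicate point is the bookkeeping of the parity cases together with the standard sums $\sum (2k-1)^2=q(2q-1)(2q+1)/3$ and $\sum k^2=q(q+1)(2q+1)/6$. As a consistency check, for $n=1$ (so $q=0$, $r=1$) both sides equal $0$, and for $n=2$ the formula gives $2$, which is the value of a single digon. No genuine obstacle is expected.
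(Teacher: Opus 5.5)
Your proposal is correct and follows the paper's route: the corollary is exactly Theorem~\ref{thm:intro-cycles} specialized to $\varphi(x)=x^2$, which is nondecreasing and convex on $[0,n-1]$. The paper leaves the $\ell=3$ closed form implicit, and your parity computation correctly supplies it, including the observation that the final-block term $r(r-1)^2$ vanishes for $r\in\{0,1\}$.
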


\begin{proposition}\label{prop:c3-digraph-equality}
Let $D$ be a $\Cvec{3}$-free digraph on $n$ vertices. Then $\sum_{v\in V(D)} d_D^+(v)^2=\frac{n(n^2-1)}3$ if and only if $D$ is isomorphic to the ordered digon chain $\Fchain_{n,2}$.
\end{proposition}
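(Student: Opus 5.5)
The ``if'' direction is immediate from Corollary~\ref{cor:cycle-square} and Lemma~\ref{lem:block-chain}: $\Fchain_{n,2}$ is $\Cvec{3}$-free and attains $\frac{n(n^2-1)}3$. For the ``only if'' direction, the plan is to extract equality conditions from the proof of Theorem~\ref{thm:majorization}. The key step is to show that equality in the square moment forces the out-degree sequence of $D$ to be exactly $b=(n-1,n-1,n-3,n-3,\dots)$, the sequence of $\Fchain_{n,2}$. Once this is known, a direct structural argument identifies $D$.

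\emph{Step 1: the degree sequence is forced.} Let $d_1\ge\cdots\ge d_n$ be the out-degrees of $D$. The proof of Theorem~\ref{thm:majorization} gives $\sum_{i\le j}d_i\le\sum_{i\le j}b_i$ for all $j$. For $\varphi(x)=x^2$, the decomposition in Lemma~\ref{lem:weak-majorization} has $\alpha=1$ and $\beta_t=2$ for every $t$. Hence equality forces $\sum_i d_i=\sum_i b_i$ and $\sum_i(d_i-t)_+=\sum_i(b_i-t)_+$ for all $t=1,\dots,n-2$. The function $t\mapsto\sum_i(c_i-t)_+$ on integers determines the multiset $\{c_i\}$, since its second differences count the entries equal to $t$. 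This recovers the multiset above $1$, and the total sum together with the count of entries $\ge1$ pins down the rest. Alternatively, one can argue directly: $\sum_i b_i^2-\sum_i d_i^2$ dominates a nonnegative combination that vanishes only if $d=b$. I would use the $(x-t)_+$ characterization, since it is cleanest. This gives $d=b$.

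\emph{Step 2: the structure is forced by induction on $n$.} Now $d_1=d_2=n-1$, so vertices $u,v$ both send arcs to all other vertices. In particular $uv$ and $vu$ both lie in $A(D)$. For any other vertex $w$, a $\Cvec{3}$ would arise from an arc $wu$ (via $u\to v\to w\to u$) or from an arc $wv$ (via $v\to u\to w\to v$). So no vertex outside $\{u,v\}$ sends an arc into $\{u,v\}$. It follows that $D-\{u,v\}$ is $\Cvec{3}$-free on $n-2$ vertices, with out-degrees $d_i-0$ for the remaining vertices minus nothing from arcs to $u,v$. More precisely, each remaining vertex loses no out-arcs, since it had none to $u,v$. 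Its out-degree sequence is therefore $(n-3,n-3,\dots)$, which is exactly the sequence of $\Fchain_{n-2,2}$. By induction, or by rerunning Step 2, $D-\{u,v\}\cong\Fchain_{n-2,2}$. The base cases are $n\le 2$, together with $n=1$ for odd $n$, where the final block has $r=1$ and out-degree $0$. Reattaching $u,v$ as the first digon block, with all arcs outward, gives $\Fchain_{n,2}$.

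The main obstacle is Step 1, i.e.\ making the equality case of Lemma~\ref{lem:weak-majorization} rigorous. Everything after it is a short structural check. An alternative for Step 1 avoids the recovery argument. Take the first $j$ with $d_j\ne b_j$; then $d_j<b_j$, and the strictness of $x^2$ combined with the prefix inequalities should yield a strict deficit. That argument is messier, so I would go via the $(x-t)_+$ functions.
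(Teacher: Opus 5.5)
Your proposal is correct and follows essentially the same route as the paper. Writing $x^2=x+2\sum_{t=1}^{n-2}(x-t)_+$, equality of the square sums forces equality of every hinge sum, and this pins down the out-degree sequence as $(n-1,n-1,n-3,n-3,\ldots)$; the paper takes first differences to get $|\{i:d_i\ge s\}|=|\{i:b_i\ge s\}|$ where you take second differences, which is immaterial. The structural step is the same induction, peeling off the two vertices of out-degree $n-1$ as an initial digon block with no arcs returning to it.
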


\begin{proof}
The ordered digon chain attains equality by Corollary~\ref{cor:cycle-square}. Conversely,
let $D$ be an extremal $\Cvec{3}$-free digraph, and write its out-degrees in nonincreasing order as $d_1\ge d_2\ge\cdots\ge d_n$.
Let $b_1\ge b_2\ge \cdots\ge b_n$ be the out-degree sequence of $\Fchain_{n,2}$, namely $n-1,n-1,n-3,n-3,\ldots$, with a final $0$ when $n$ is odd. In the proof of Theorem~\ref{thm:majorization}, the sequence $(d_i)$ is weakly majorized by $(b_i)$. We claim that equality in the square-sum inequality forces
$(d_i)_{i=1}^n=(b_i)_{i=1}^n$.  Indeed, for $0\le x\le n-1$,
\[
  x^2=x+2\sum_{t=1}^{n-2}(x-t)_+ .
\]
In the proof of Theorem~\ref{thm:majorization}, each of the quantities
\[
  \sum_i d_i,\qquad
  \sum_i (d_i-t)_+ \quad (1\le t\le n-2)
\]
is bounded above by the corresponding quantity for $(b_i)$.  All
coefficients in the displayed representation of $x^2$ are positive.
Hence equality of the square sums implies equality in all these inequalities.

For a nonincreasing integer sequence $c_1\ge \cdots\ge c_n$,
\[
 \sum_i(c_i-t)_+ - \sum_i(c_i-t-1)_+
 =
 |\{i:c_i\ge t+1\}|.
\]
Applying this identity to $(d_i)$ and $(b_i)$, the equality of all hinge
sums gives
\[
  |\{i:d_i\ge s\}|=|\{i:b_i\ge s\}|
  \qquad\text{for every }s\ge 1.
\]
Therefore the two nonincreasing integer sequences are equal.

We now prove by induction on $n$ that $D\cong\Fchain_{n,2}$. The cases $n\le1$ are trivial. For $n\ge2$, the equality of degree sequences gives two vertices $u,v$ of out-degree $n-1$. Thus $uv$ and $vu$ are both arcs, and both $u$ and $v$ send arcs to every other vertex. If some vertex $w\notin\{u,v\}$ sent an arc to $u$, then the arcs $w u, u v,v w$
would form a directed triangle, a contradiction. Similarly no vertex outside $\{u,v\}$ sends an arc to $v$. Hence $\{u,v\}$ is a complete digraph block, all arcs between this block and the remaining vertices are directed away from the block, and there are no arcs in the reverse direction. Removing $u$ and $v$ leaves a $\Cvec{3}$-free digraph whose out-degree sequence is $n-3,n-3,n-5,n-5,\ldots$, with a final $0$ if $n-2$ is odd. From the induction hypothesis we know that the remaining digraph must be $\Fchain_{n-2,2}$. Therefore, we conclude that $D\cong\Fchain_{n,2}$.
\end{proof}

\section{Palettes and auxiliary digraphs}\label{sec:palettes}

We recall the palette language used in the study of uniformly dense $3$-graphs. A palette is a pair $\mathcal{P}=(\C,\A)$, where $\C$ is a finite set of colors and $\A\subseteq \C^3$ is a set of admissible ordered triples. Repetitions of colors in admissible triples are allowed. Its density is $d(\mathcal{P})=\frac{|\A|}{|\C|^3}$.
A $3$-graph $H$ is $\mathcal{P}$-colorable if there is a linear order on $V(H)$ and a coloring of all pairs of vertices by elements of $\C$ such that, whenever $x<y<z$ is an edge of $H$, the triple of pair-colors on $xy,xz,yz$ belongs to $\A$.

A homomorphism from a palette $\mathcal{P}=(\C,\A)$ to a palette $\mathcal P'=(\C',\A')$ is a map $f:\C\to\C'$ such that
\[
(a,b,c)\in\A
\quad\Longrightarrow\quad
(f(a),f(b),f(c))\in\A'.
\]
We write $\mathcal{P}\preceq \mathcal P'$ if such a homomorphism exists, and define the reverse palette of $\mathcal{P}$ by
\[
\revop(\mathcal{P})=(\C,\{(c,b,a):(a,b,c)\in\A\}).
\]

\begin{lemma}\label{lem:reverse-colorable}
A $3$-graph $H$ is $\mathcal{P}$-colorable if and only if it is $\revop(\mathcal{P})$-colorable.
\end{lemma}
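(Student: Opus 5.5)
The plan is to reverse the vertex order. Suppose $H$ is $\mathcal P$-colorable, witnessed by a linear order $<$ on $V(H)$ and a coloring $\chi:\binom{V(H)}{2}\to\C$. We keep the same coloring $\chi$ and replace $<$ by the reversed order $<'$, where $x<'y$ if and only if $y<x$. We then check that every edge of $H$ is admissible for $\revop(\mathcal P)$ under $(<',\chi)$.

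Take an edge of $H$, and label its vertices $x<y<z$ in the original order.
\begin{itemize}
\item In the original order its pair-colors are $(\chi(xy),\chi(xz),\chi(yz))$, and this triple lies in $\A$.
\item In the reversed order the same edge reads $z<'y<'x$.
\item By the definition of admissibility, the relevant triple is then (color of the first pair, color of the outer pair, color of the last pair), that is, $(\chi(zy),\chi(zx),\chi(yx))=(\chi(yz),\chi(xz),\chi(xy))$.
\item This is exactly the reversal $(c,b,a)$ of $(a,b,c)=(\chi(xy),\chi(xz),\chi(yz))\in\A$.
\end{itemize}
Hence it is admissible in $\revop(\mathcal P)$, so $H$ is $\revop(\mathcal P)$-colorable. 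The fact that the middle (outer) pair $xz$ stays in the middle is what makes the computation work.

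For the converse, we use that $\revop(\revop(\mathcal P))=\mathcal P$, because reversing a triple twice returns it. Applying the forward direction to $\revop(\mathcal P)$ therefore gives the other implication.

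There is no real obstacle here. The only point needing care is the bookkeeping of which pair occupies which coordinate after the order is reversed.
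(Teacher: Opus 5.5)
Your proposal is correct and follows the paper's own argument: reverse the vertex order, keep the same pair-coloring, and observe that an edge's triple $(a,b,c)$ becomes $(c,b,a)$. Obtaining the converse from $\revop(\revop(\mathcal P))=\mathcal P$ just states explicitly what the paper leaves implicit.
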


\begin{proof}
Reverse the linear order on $V(H)$ and keep the same colors on unordered pairs. If an edge has color triple $(a,b,c)$ in the original order, then it has color triple $(c,b,a)$ in the reversed order.
\end{proof}

\begin{remark}\label{rem:compose-palettes}
If $\mathcal P\preceq \mathcal P'$ and $H$ is $\mathcal P$-colorable, then $H$ is $\mathcal P'$-colorable by composing the pair-coloring with the palette homomorphism.
\end{remark}

Let $D$ be a loopless digraph and let
\[
\C_D=V(D)\sqcup\{c_{uv}:uv\in A(D)\},
\]
where $c_{uv}$ are new colors. The left and right palettes generated by $D$ are
\[
\mathcal{P}_D^L=(\C_D,\{(u,v,c_{uv}):uv\in A(D)\})
\]
and
\[
\mathcal{P}_D^R=(\C_D,\{(c_{uv},u,v):uv\in A(D)\}).
\]

For a palette $\mathcal P=(\C,\A)$, define two auxiliary digraphs $G_L(\mathcal P)$ and $G_R(\mathcal P)$ on vertex set $\C$ by
\[
xy\in A(G_L(\mathcal P))
\quad\Longleftrightarrow\quad
\exists z\in\C\text{ such that }(x,y,z)\in\A,
\]
and
\[
yz\in A(G_R(\mathcal P))
\quad\Longleftrightarrow\quad
\exists x\in\C\text{ such that }(x,y,z)\in\A.
\]
The auxiliary digraphs may have loops.

\begin{lemma}\label{lem:palette-auxiliary}
Let $T$ be a tournament and $\mathcal P$ a palette. Then
\[
\mathcal{P}_T^L\preceq \mathcal P
\quad\Longleftrightarrow\quad
T\to G_L(\mathcal P),
\]
and
\[
\mathcal{P}_T^R\preceq \mathcal P
\quad\Longleftrightarrow\quad
T\to G_R(\mathcal P).
\]
\end{lemma}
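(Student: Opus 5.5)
We prove the left equivalence directly from the definitions. The right one follows by the symmetric argument, or equivalently by reading admissible triples $(x,y,z)$ through their last two coordinates. Throughout, $T\to G$ means a digraph homomorphism: a map $g:V(T)\to V(G)$ with $g(u)g(v)\in A(G)$ for every $uv\in A(T)$. Loops in $G$ are allowed as images.

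($\Rightarrow$) Suppose $f:\C_T\to\C$ is a palette homomorphism $\mathcal P_T^L\to\mathcal P$. Let $g$ be the restriction of $f$ to $V(T)\subseteq\C_T$. For every arc $uv\in A(T)$, the triple $(u,v,c_{uv})$ is admissible in $\mathcal P_T^L$. Hence $(f(u),f(v),f(c_{uv}))\in\A$, which by definition of $G_L(\mathcal P)$ means $g(u)g(v)\in A(G_L(\mathcal P))$. So $g$ is a homomorphism $T\to G_L(\mathcal P)$.

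($\Leftarrow$) Conversely, let $g:V(T)\to\C$ be a homomorphism into $G_L(\mathcal P)$. For each arc $uv\in A(T)$, the pair $g(u)g(v)$ is an arc of $G_L(\mathcal P)$, so we may choose a witness $z_{uv}\in\C$ with $(g(u),g(v),z_{uv})\in\A$. Define $f$ on $\C_T$ by $f(w)=g(w)$ for $w\in V(T)$ and $f(c_{uv})=z_{uv}$. This is well defined because the colors $c_{uv}$ are distinct new colors, one per arc, and disjoint from $V(T)$. The admissible triples of $\mathcal P_T^L$ are exactly the $(u,v,c_{uv})$, and each is mapped into $\A$, so $f$ is a palette homomorphism.

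The right palette is handled identically. Use $g=f|_{V(T)}$ and the triples $(c_{uv},u,v)\mapsto(f(c_{uv}),g(u),g(v))$. For the converse, choose witnesses $x_{uv}$ with $(x_{uv},g(u),g(v))\in\A$ and set $f(c_{uv})=x_{uv}$.

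There is no real obstacle here. The one point needing care is that the argument never requires $g$ to be injective, and that loops in the auxiliary digraphs are permitted. So the statement concerns homomorphisms, not copies. The tournament hypothesis is not actually used; the lemma holds for any loopless digraph $T$.
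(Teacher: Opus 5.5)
Your proof is correct and follows the paper's argument: restrict the palette homomorphism to $V(T)$ for the forward direction, and choose a witness color for each arc $uv\in A(T)$ to extend a digraph homomorphism for the converse, with the right palette handled symmetrically. Your remarks are also accurate: loops in $G_L(\mathcal P)$ and $G_R(\mathcal P)$ are permitted as images, and the tournament hypothesis is not needed here. The paper uses that hypothesis only later, via Remark~\ref{rem:tournament-injective}.
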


\begin{proof}
Write $\mathcal P=(\C,\A)$. We prove the first equivalence; the second is identical. Suppose first that $f:\mathcal{P}_T^L\to \mathcal P$ is a palette homomorphism. Then the restriction of $f$ to $V(T)$ maps every arc $uv\in A(T)$ to an arc $f(u)f(v)\in A(G_L(\mathcal P))$, because $(f(u),f(v),f(c_{uv}))\in\A$.
Thus $f|_{V(T)}$ is a digraph homomorphism from $T$ to $G_L(\mathcal P)$.

Conversely, suppose that $g:V(T)\to \C$ is a digraph homomorphism from $T$ to $G_L(\mathcal P)$. For every arc $uv\in A(T)$, choose a color $w_{uv}\in \C$ such that
$(g(u),g(v),w_{uv})\in\A$.
Extend $g$ to $V(T)\cup A(T)$ by sending $c_{uv}$ to $w_{uv}$. This extension is a palette homomorphism from $\mathcal{P}_T^L$ to $\mathcal P$.
\end{proof}

\begin{remark}\label{rem:tournament-injective}
Every homomorphism from a tournament to a loopless digraph is injective.
\end{remark}

\section{Self-converse tournaments and finite palette extremality}\label{sec:self-converse}

Let $T$ be a tournament. Using the notation $\C(\mathcal P)$ and $\A(\mathcal P)$ for the color set and admissible-triple set of a palette $\mathcal P$, define
\[
\pal_T(m)=
\max\{|\A(\mathcal P)|: |\C(\mathcal P)|=m,\; \mathcal{P}_T^L\npreceq \mathcal P,\; \mathcal{P}_T^R\npreceq \mathcal P\}.
\]
We now prove Theorem~\ref{thm:intro-palette}.
\begin{proof}[Proof of Theorem~\ref{thm:intro-palette}]
We prove the two inequalities $\pal_T(m)\leq \operatorname{ex}_2^+(m,T)$ and $\pal_T(m)\geq \operatorname{ex}_2^+(m,T)$.
Let $\mathcal P=(\C,\A)$ be an $m$-color palette such that $\mathcal{P}_T^L\npreceq \mathcal P$ and $\mathcal{P}_T^R\npreceq \mathcal P$. We first show that $G_L(\mathcal P)$ and $G_R(\mathcal P)$ are loopless. If $G_L(\mathcal P)$ has a loop at $x$, then there exists $y\in\C$ such that $(x,x,y)\in\A$. Since $T$ has at least one arc, mapping every vertex of $T$ to $x$ and every arc-color $c_{uv}$ to $y$ gives a palette homomorphism $\mathcal{P}_T^L\to \mathcal P$, a contradiction. The proof for $G_R(\mathcal P)$ is analogous.

By Lemma~\ref{lem:palette-auxiliary}, neither $G_L(\mathcal P)$ nor $G_R(\mathcal P)$ admits a homomorphism from $T$. In particular, both auxiliary digraphs are $T$-free. For each $y\in\C$, let $B_y=\{(x,z)\in\C^2:(x,y,z)\in\A\}$.
If $(x,z)\in B_y$, then $xy\in A(G_L(\mathcal P))$ and $yz\in A(G_R(\mathcal P))$. Hence $|B_y|\le d^-_{G_L(\mathcal P)}(y)d^+_{G_R(\mathcal P)}(y)$.
It follows from Cauchy--Schwarz that
\[
\begin{aligned}
|\A|
&=\sum_{y\in\C}|B_y| \\
&\le
\sum_{y\in\C}d^-_{G_L(\mathcal P)}(y)d^+_{G_R(\mathcal P)}(y) \\
&\le
\left(\sum_{y\in\C} d^-_{G_L(\mathcal P)}(y)^2\right)^{1/2}
\left(\sum_{y\in\C} d^+_{G_R(\mathcal P)}(y)^2\right)^{1/2}.
\end{aligned}
\]
Since $G_R(\mathcal P)$ is $T$-free, $\sum_y d^+_{G_R(\mathcal P)}(y)^2\le \operatorname{ex}_2^+(m,T)$.
Moreover, $G_L(\mathcal P)^{\mathrm{rev}}$ is also $T$-free: otherwise $T\to G_L(\mathcal P)^{\mathrm{rev}}$ would imply $T^{\mathrm{rev}}\to G_L(\mathcal P)$, and since $T$ is self-converse this would give $T\to G_L(\mathcal P)$. Therefore
\[
\sum_y d^-_{G_L(\mathcal P)}(y)^2
=
\sum_y d^+_{G_L(\mathcal P)^{\mathrm{rev}}}(y)^2
\le
\operatorname{ex}_2^+(m,T).
\]
Thus $|\A|\le \operatorname{ex}_2^+(m,T)$.

For the lower bound, let $D$ be a $T$-free digraph on $m$ vertices with $\sum_{y\in V(D)} d_D^+(y)^2=\operatorname{ex}_2^+(m,T)$.
Define a palette $\mathcal{P}_D=(V(D),\A_D)$ by
\[
(x,y,z)\in\A_D
\quad\Longleftrightarrow\quad
yx\in A(D)\text{ and }yz\in A(D).
\]
Then $|\A_D|=\sum_{y\in V(D)}d_D^+(y)^2$.
Repetitions of colors in triples are allowed. Hence, if $yz\in A(D)$, then taking $x=z$ shows $yz\in A(G_R(\mathcal{P}_D))$, and similarly for $G_L(\mathcal{P}_D)$. Therefore
$G_R(\mathcal{P}_D)=D$ and $G_L(\mathcal{P}_D)=D^{\mathrm{rev}}$.

Since $D$ is $T$-free and $T$ is self-converse, both auxiliary digraphs are $T$-free. Lemma~\ref{lem:palette-auxiliary} gives $\mathcal{P}_T^L\npreceq \mathcal{P}_D$ and $\mathcal{P}_T^R\npreceq \mathcal{P}_D$. Thus $\pal_T(m)\ge \operatorname{ex}_2^+(m,T)$, completing the proof.
\end{proof}

\subsection{The transitive triangle}\label{subsec:transitive-triangle}

The transitive tournament $TT_3$ on three vertices is self-converse, yet its
behavior differs significantly from that of the directed triangle.

\begin{theorem}\label{thm:transitive-triangle-square}
For every $m\ge1$, $\operatorname{ex}_2^+(m,TT_3)
=m\left\lfloor\frac{m^2}{4}\right\rfloor$.
Moreover, equality is attained precisely by the complete bipartite digraph with part sizes $\lfloor m/2\rfloor$ and $\lceil m/2\rceil$.
\end{theorem}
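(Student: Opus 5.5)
The plan is to prove the upper bound $\sum_v d_D^+(v)^2\le m\lfloor m^2/4\rfloor$ by a local double-counting argument that uses only the fact that out-neighbourhoods are independent. I will then verify the construction and recover the extremal digraph from the equality conditions. Theorem~\ref{thm:majorization} does not seem to be needed here.

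For the lower bound, let $K$ be the complete bipartite digraph with parts $X,Y$ of sizes $a=\lfloor m/2\rfloor$ and $b=\lceil m/2\rceil$, with both arcs between every pair $x\in X$, $y\in Y$. Its underlying simple graph is bipartite, hence triangle-free. Therefore $K$ contains no $TT_3$, since a copy of $TT_3$ would span a triangle in the underlying graph. Each vertex of $X$ has out-degree $b$ and each vertex of $Y$ has out-degree $a$, so
\[
\sum_v d_K^+(v)^2=ab^2+ba^2=ab(a+b)=m\lfloor m^2/4\rfloor .
\]

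For the upper bound, let $D$ be $TT_3$-free on $m$ vertices, let $G$ be its underlying simple graph, and write $g(u)=\deg_G(u)$.
\begin{enumerate}[label=(\roman*)]
\item For every $v$, the out-neighbourhood $N^+(v)$ spans no arc of $D$. Indeed, $v\to x$, $v\to y$ and $x\to y$ would form a $TT_3$. So $N^+(v)$ is independent in $G$.
\item If $u\in N^+(v)$, then $N^+(v)\cap N_G(u)=\emptyset$, which gives $d^+(v)\le m-g(u)$.
\item Summing (ii) over $u\in N^+(v)$ gives $d^+(v)^2\le \sum_{u\in N^+(v)}(m-g(u))$. Summing over $v$ and swapping the order of summation,
\[
\sum_v d^+(v)^2\le \sum_u d^-(u)\,(m-g(u))\le \sum_u g(u)\,(m-g(u))\le m\left\lfloor\frac{m^2}{4}\right\rfloor .
\]
The middle step uses $d^-(u)\le g(u)$ and $m-g(u)\ge 0$. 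The last step uses that $g(m-g)\le\lfloor m^2/4\rfloor$ for every integer $g$.
\end{enumerate}

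The step needing the most care is the equality characterization, which I would read off from the three inequalities in (iii). Assume $m\ge2$, since the case $m=1$ is trivial.
\begin{itemize}
\item Equality in the last inequality forces $g(u)\in\{\lfloor m/2\rfloor,\lceil m/2\rceil\}$ for every $u$. In particular $g(u)\ge1$, so $m-g(u)>0$.
\item Equality in the middle inequality then forces $d^-(u)=g(u)$ for every $u$. Summing over $u$ gives $|A(D)|=\sum_u g(u)=2|E(G)|$, so every edge of $G$ is a digon and $N^+(v)=N_G(v)$ for all $v$.
\item Equality in (ii)–(iii) forces $d^+(v)=m-g(u)$ for every edge $uv$. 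Since $N(v)$ and $N(u)$ are disjoint, they therefore partition $V$. By (i), both sets are independent in $G$.
\end{itemize}

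Now fix an edge $uv$ of $G$, which exists because degrees are positive. Then $G$ is bipartite with parts $N(v)$ and $N(u)$. For any $x\in N(v)$, apply the partition property to the edge $xv$: it gives $N(x)\supseteq V\setminus N(v)=N(u)$. Hence $G$ is complete bipartite, and since every edge is a digon, $D$ is the complete bipartite digraph.

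Finally, the part sizes $s$ and $m-s$ must satisfy $s(m-s)=\lfloor m^2/4\rfloor$, which forces the balanced partition. This identifies the extremal digraph as the balanced complete bipartite digraph.
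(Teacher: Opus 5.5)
Your proof is correct, and it takes a genuinely different route from the paper's.

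The paper argues globally from one vertex $x_0$ of maximum out-degree $\Delta$. Since $X=N^+(x_0)$ is independent, every vertex of $X$ has out-degree at most $m-\Delta$, and every other vertex has out-degree at most $\Delta$. This gives $\sum_v d^+(v)^2\le m\Delta(m-\Delta)$, which is the analogue of the max-degree proof of Mantel's theorem. You instead double count over arcs: you use $d^+(v)\le m-g(u)$ for each arc $vu$ and then $d^-(u)\le g(u)$. That is the digraph analogue of the edge-sum (Cauchy--Schwarz) proof of Mantel.

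The main payoff of your route is in the equality case. Your bound is a sum of termwise inequalities, so equality localizes to every vertex and every arc. This gives directly that every edge is a digon, and that $N(u)$ and $N(v)$ partition $V$ into independent sets for each edge $uv$. Complete bipartiteness and balance then follow at once.

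The paper needs extra steps for equality. It uses maximality of $\Delta$ to force $\Delta=\lceil m/2\rceil$. It then needs a separate transitive-triangle argument to show that $V\setminus X$ is independent.

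In return, the paper's upper bound is slightly shorter and is expressed through the single parameter $\Delta$. Your bound also holds uniformly for all $m\ge1$, whereas the paper sets aside $m\le 2$. You correctly note the one point needing care: $m-g(u)>0$ is required to pass from equality in the middle inequality to $d^-(u)=g(u)$.
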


\begin{proof}
The cases $m\le 2$ are immediate, so it is sufficient to consider $m\ge 3$. Let $D$ be a
$TT_3$-free digraph on $m$ vertices. For every vertex $w$ of $D$, the
out-neighborhood $N_D^+(w)$ is independent in the underlying graph of
$D$: indeed, if $x,y\in N_D^+(w)$ and at least one of $xy$ or $yx$
is an arc, then $w,x,y$ span a transitive triangle.

Choose a vertex $x_0$ of maximum out-degree, and write $\Delta:=d_D^+(x_0)$,
$X:=N_D^+(x_0)$ and $Y:=V(D)\setminus X$.
Then $|X|=\Delta$ and $|Y|=m-\Delta$. By the observation above, there
are no arcs inside $X$. Hence every vertex of $X$ has all its
out-neighbors in $Y$, and therefore has out-degree at most
$|Y|=m-\Delta$. Every vertex of $Y$ has out-degree at most $\Delta$,
by the choice of $x_0$. Consequently,
\[
\sum_{u\in V(D)} d_D^+(u)^2
\le
|Y|\Delta^2+|X|(m-\Delta)^2
=
(m-\Delta)\Delta^2+\Delta(m-\Delta)^2
=
m\Delta(m-\Delta)
\le
m\left\lfloor \frac{m^2}{4}\right\rfloor.
\]

For the lower bound, take a partition $V=A\cup B$ with $|A|=\left\lfloor \frac m2\right\rfloor$ and $|B|=\left\lceil \frac m2\right\rceil$,
put both arcs between every pair $a\in A$, $b\in B$, and put no arcs
inside the parts. This digraph is $TT_3$-free, since every
out-neighborhood is one of the two independent parts. Its out-degree
square sum is
\[
|A||B|^2+|B||A|^2
=
m|A||B|
=
m\left\lfloor\frac{m^2}{4}\right\rfloor.
\]

It remains to prove the equality statement. Suppose that $D$ is
extremal. Then equality must hold in each of the inequalities used above: in the
bounds on the out-degrees of vertices in $X$ and $Y$, and in the
inequality $\Delta(m-\Delta)\le \lfloor m^2/4\rfloor$. In
particular, $\Delta(m-\Delta)=\left\lfloor\frac{m^2}{4}\right\rfloor$.
Moreover, every vertex of $X$ must have out-degree $m-\Delta$, and
every vertex of $Y$ must have out-degree $\Delta$. Since $\Delta$
is the maximum out-degree, the first condition implies $m-\Delta\le \Delta$.
Together with the product equality above, this forces
$\Delta=\left\lceil\frac m2\right\rceil$ and $|Y|=m-\Delta=\left\lfloor\frac m2\right\rfloor$.

Since $X$ is independent and every vertex of $X$ has out-degree
$|Y|$, every vertex of $X$ sends an arc to every vertex of $Y$.

We now show that $Y$ is independent. Suppose, on the contrary, that
there exist $y,y'\in Y$ with $y\to y'$. If $y$ sent an arc to some
$x\in X$, then, since every vertex of $X$ sends an arc to every
vertex of $Y$, we would have $y\to x, y\to y', x\to y'$, and hence $y,x,y'$ would span a transitive triangle, a contradiction.
Thus $y$ sends no arcs to $X$. Consequently, $d_D^+(y)\le |Y|-1<\Delta$,
contradicting the equality condition that every vertex of $Y$ has
out-degree $\Delta$. This proves that $Y$ is independent.

Since $Y$ is independent and every vertex of $Y$ has out-degree
$\Delta=|X|$, every vertex of $Y$ sends an arc to every vertex of
$X$. We have proved that both arcs are present between every vertex of
$X$ and every vertex of $Y$, and that there are no arcs inside either
part. Hence $D$ is the complete bipartite digraph with
balanced part sizes.
\end{proof}

\begin{corollary}\label{cor:transitive-triangle-palette}
For every $m\ge1$, $\pal_{TT_3}(m)=m\left\lfloor\frac{m^2}{4}\right\rfloor$.
Equivalently, the sharp $m$-color palette density for avoiding both the left and right transitive-triangle palettes is $\frac{1}{m^2}\left\lfloor\frac{m^2}{4}\right\rfloor$.
\end{corollary}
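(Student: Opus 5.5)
The corollary follows by combining Theorem~\ref{thm:intro-palette} with Theorem~\ref{thm:transitive-triangle-square}, so the plan is mostly to check that the hypotheses line up.

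First, I verify that $TT_3$ meets the hypotheses of Theorem~\ref{thm:intro-palette}. It is a tournament with at least two vertices. It is self-converse because reversing all arcs of $TT_3$ exchanges the source and the sink and gives another transitive tournament on three vertices, and any two such tournaments are isomorphic. Theorem~\ref{thm:intro-palette} then gives $\pal_{TT_3}(m)=\operatorname{ex}_2^+(m,TT_3)$ for every $m\ge1$.

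Next, I substitute the value $\operatorname{ex}_2^+(m,TT_3)=m\lfloor m^2/4\rfloor$ from Theorem~\ref{thm:transitive-triangle-square}, which yields $\pal_{TT_3}(m)=m\lfloor m^2/4\rfloor$.

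For the density reformulation, note that every palette counted in $\pal_{TT_3}(m)$ has exactly $m$ colors. Its density is therefore $|\A|/m^3$, and the maximum density is $\pal_{TT_3}(m)/m^3=\frac{1}{m^2}\lfloor m^2/4\rfloor$. The bound is sharp: the lower-bound construction in the proof of Theorem~\ref{thm:intro-palette}, applied to the balanced complete bipartite digraph, is an $m$-color palette avoiding both $\mathcal P_{TT_3}^L$ and $\mathcal P_{TT_3}^R$ that attains this value.

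The only step needing any care is the self-converse verification, and it is immediate.
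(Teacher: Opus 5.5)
Your proposal is correct and matches the paper's proof: the paper likewise notes that $TT_3$ is self-converse and deduces the corollary directly from Theorem~\ref{thm:intro-palette} and Theorem~\ref{thm:transitive-triangle-square}. The density form follows, as you say, by dividing by $m^3$.
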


\begin{proof}
Since $TT_3$ is self-converse, this corollary follows immediately from Theorems~\ref{thm:intro-palette} and~\ref{thm:transitive-triangle-square}.
\end{proof}

\section{The finite endpoint below one third}\label{sec:endpoint}

Let $\mathcal{P}_L=\mathcal{P}_{\Cvec{3}}^L$ and $\mathcal{P}_R=\mathcal{P}_{\Cvec{3}}^R$.
Since $\Cvec{3}$ is self-converse, $\mathcal{P}_L$ is isomorphic to $\revop(\mathcal{P}_R)$, and $\mathcal{P}_R$ is isomorphic to $\revop(\mathcal{P}_L)$.

We first recall two useful results that will be used in the proof.
The first is Lamaison's palette characterization of uniform Tur\'an density. 

\begin{theorem}[Lamaison,~\cite{Lamaison2024}]\label{thm:lamaison}
For every finite $3$-graph $H$, \[\piu(H)=\sup\{d(\mathcal P): H\text{ is not }\mathcal P\text{-colorable}\}.\]
\end{theorem}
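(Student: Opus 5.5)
The identity is Lamaison's theorem, which we use as a black box. If one wanted to reprove it, the plan would be to prove the two inequalities separately. The lower bound comes from a random construction. The upper bound comes from hypergraph regularity combined with an ordered Ramsey argument that extracts a palette from an extremal host graph.

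\emph{Lower bound.} Fix a palette $\mathcal P=(\C,\A)$ such that $H$ is not $\mathcal P$-colorable.
\begin{enumerate}
\item Take vertex set $[n]$ with its natural order and colour every pair independently and uniformly at random from $\C$.
\item Declare $x<y<z$ an edge exactly when $(\chi(xy),\chi(xz),\chi(yz))\in\A$.
\item Any copy of $H$ in this $3$-graph would give a $\mathcal P$-colouring of $H$, by restricting the order and the pair colouring. So the $3$-graph is $H$-free.
\item For a fixed $U$ with $|U|\ge\eta n$, the expected value of $e(U)$ is $d(\mathcal P)\binom{|U|}{3}$.
\item Changing one pair colour alters $e(U)$ by at most $|U|$. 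Azuma's inequality over the $\binom n2$ pair colours then gives a failure probability of $\exp(-c\,\varepsilon^2\eta^{4} n^2)$.
\item This beats the union bound over $2^n$ subsets. Hence $\piu(H)\ge d(\mathcal P)-\varepsilon$ for every $\varepsilon>0$.
\end{enumerate}

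\emph{Upper bound.} Suppose $G$ is a large $H$-free $(d,\eta)$-uniformly dense $3$-graph with $d$ exceeding the supremum by some $\varepsilon>0$.
\begin{enumerate}
\item Apply the hypergraph regularity lemma in the Reiher--R\"odl--Schacht form. This gives a vertex partition $V_1,\dots,V_t$ and, for each pair $i<j$, a partition of the pairs between $V_i$ and $V_j$ into $\ell$ quasirandom bipartite graphs. We refine so that these graphs have (nearly) equal densities $1/\ell$, with $\ell$ bounded.
\item For $i<j<k$, record the set $\A_{ijk}\subseteq[\ell]^3$ of triads $(\alpha,\beta,\gamma)$ on which $G$ has relative density at least $\varepsilon/2$.
\item Colour each ordered triple $i<j<k$ by $\A_{ijk}$. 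There are at most $2^{\ell^3}$ colours. Apply Ramsey's theorem to obtain a large set of cluster indices on which $\A_{ijk}=\A$ is constant.
\item Uniform density, applied to the union of any three of these clusters together with a cleaning of irregular triads, shows $|\A|/\ell^3\ge d-\varepsilon/2$.
\item Since $d(([\ell],\A))>\sup\{d(\mathcal P):H\text{ not }\mathcal P\text{-colorable}\}$, the graph $H$ is $([\ell],\A)$-colorable. Fix such a colouring.
\item Embed vertices of $H$ into distinct clusters according to its order. Send each pair to the bipartite graph indexed by its colour, so that every edge of $H$ lands on a dense regular triad.
\item The counting lemma then produces a copy of $H$ in $G$, a contradiction.
\end{enumerate}

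\emph{Main obstacle.} The key difficulty is the consistency step. The labels $1,\dots,\ell$ of the bipartite graphs must mean the same thing across different cluster pairs, so that a single palette emerges and its density is measured with uniform weights. I would handle this in two ways. First, the Ramsey step must be applied to cluster triples rather than single pairs. Second, the pair partitions must be equalised in density before taking triads, so that $|\A|/\ell^3$ genuinely lower-bounds the edge density.
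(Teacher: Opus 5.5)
Citing the statement as a black box is exactly what the paper does: it invokes Lamaison and gives no proof. Your random-colouring argument for $\piu(H)\ge d(\mathcal P)$ is the standard one and is correct. Your sketch of the reverse inequality, however, does not work as written.

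The smaller problem is step 4. In $U=V_i\cup V_j\cup V_k$ only about $2/9$ of the triples are transversal. So $e(U)\ge d\binom{|U|}{3}$ says nothing about the crossing density of $(V_i,V_j,V_k)$, because triples with two or three vertices in one cluster can supply all the required edges. Individual cluster triples genuinely need not be dense. What uniform density does give is that the crossing densities, averaged over all triples of a set of $s$ clusters, are at least $d-O(1/s)$. Homogeneity gives that every crossing density in the homogeneous set is at most $|\A|/\ell^3+\varepsilon/2$ plus the fraction of irregular triads on that triple. Combining the two yields $|\A|/\ell^3\ge d-\varepsilon$, but only if the homogeneous set has size $s\gg 1/\varepsilon$ and each of its triples carries few irregular triads.

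The serious gap is step 3, which clashes with the quantifier order of the hypergraph regularity lemma in the form you invoke. There $t_0$ and $\delta_3$ must be fixed before the partition is produced. The number $\ell$ is an output, bounded only by $T_0(t_0,\delta_3,\delta_2)$, and only $\delta_2$ may depend on $\ell$.
\begin{itemize}
\item Ramsey's theorem with $2^{\ell^3}$ colours needs $t\ge R_3(s;2^{\ell^3})$ clusters.
\item Excluding triples with many irregular triads from the homogeneous set needs $\delta_3$ to be small in terms of that same Ramsey number.
\item Neither can be arranged in advance. Refining the clusters afterwards to increase $t$ destroys $\delta_3$-regularity, since subtriads of relative size below $\delta_3$ are not controlled.
\item Sampling a bounded number $k$ of labels per pair does not obviously help. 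A sample is atypical with probability roughly $e^{-c\varepsilon^2 k}$, far larger than the reciprocal of the Ramsey number for $2^{k^3}$ colours, so the homogeneous set may consist entirely of atypical triples.
\end{itemize}
If you instead apply Ramsey only to a bounded number of density classes, as Reiher, R\"odl and Schacht do, you get a reduced hypergraph on a large index set. Its pair classes, however, have unbounded size relative to that index set. You would still have to extract a single palette with consistent labels from it. That is precisely the obstacle you flag as the main one. Both of your remedies (Ramsey on cluster triples, equalised pair densities) presuppose that the number of clusters is large compared with $2^{\ell^3}$, which is exactly what is unavailable.

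So your sketch is not a proof of the upper bound. As in the paper, the statement rests on the citation.
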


The second is the palette separation theorem of Kr\'al', Ku\v{c}er\'ak, Lamaison and Tardos.

\begin{theorem}[{Kr\'al'--Ku\v{c}er\'ak--Lamaison--Tardos~\cite[Theorem~13]{KKLT2025}}]\label{thm:separation}
Let $\mathcal P$ and $\mathcal{P}_0$ be palettes. There exists a finite $3$-graph that is $\mathcal P$-colorable but not $\mathcal{P}_0$-colorable if and only if $\mathcal P\npreceq \mathcal{P}_0$ and $\mathcal P\npreceq \revop(\mathcal{P}_0)$.
\end{theorem}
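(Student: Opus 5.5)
The easy direction (``only if'') is immediate from results already recorded. Suppose some finite $3$-graph $H$ is $\mathcal P$-colorable. If $\mathcal P\preceq\mathcal P_0$, Remark~\ref{rem:compose-palettes} shows $H$ is $\mathcal P_0$-colorable. If $\mathcal P\preceq\revop(\mathcal P_0)$, the same remark makes $H$ $\revop(\mathcal P_0)$-colorable, and Lemma~\ref{lem:reverse-colorable} then makes it $\mathcal P_0$-colorable. So if $H$ is $\mathcal P$-colorable but not $\mathcal P_0$-colorable, both non-homomorphism conditions must hold.

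For the converse, assume $\mathcal P\npreceq\mathcal P_0$ and $\mathcal P\npreceq\revop(\mathcal P_0)$. I would build $H$ as the $3$-graph defined by a carefully chosen finite ordered pair-coloring $\chi:\binom{[N]}2\to\C(\mathcal P)$. Its edges are the triples $x<y<z$ with $(\chi(xy),\chi(xz),\chi(yz))\in\A(\mathcal P)$, so $H$ is $\mathcal P$-colorable by construction. The argument then runs by contradiction: given any $\mathcal P_0$-coloring of $H$, namely an order $\sigma$ and a pair-coloring $\psi$, we extract a palette homomorphism from $\mathcal P$ to $\mathcal P_0$ or to $\revop(\mathcal P_0)$.

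\textbf{Step 1: a small witness.} First fix a finite ordered edge-colored complete graph $B$ that realizes every admissible triple of $\mathcal P$ on some triangle. For instance, take disjoint triangles, one for each triple in $\A(\mathcal P)$, placed consecutively in the order, and color the remaining pairs arbitrarily.

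\textbf{Step 2: a Ramsey host.} Let $\chi$ be an ordered edge-colored complete graph $C$ satisfying $C\to(B)^{e}_{k}$ in the sense of the Ne\v{s}et\v{r}il--R\"odl structural Ramsey theorem for ordered edge-colored complete graphs. Here the object being colored is a single pair, and pairs are colored by the value of $\psi$ together with the bit recording whether $\sigma$ agrees with the original order on that pair, so $k=2|\C(\mathcal P_0)|$. I would apply this Ramsey property once per color class, iterating over the finitely many colors of $\mathcal P$. The outcome is a copy of $B$ in $C$ on which, for each $c\in\C(\mathcal P)$, all pairs of $\chi$-color $c$ receive the same $\psi$-value $f(c)$ and the same direction bit.

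\textbf{Step 3: reading off the homomorphism.} If on this copy $\sigma$ agrees with the original order everywhere, then every admissible triangle of $B$ is an edge of $H$. Its $\psi$-colors in $\sigma$-order are then $(f(a),f(b),f(c))$, which must lie in $\A(\mathcal P_0)$, so $f:\mathcal P\to\mathcal P_0$ is a homomorphism. If $\sigma$ reverses the order everywhere on the copy, the same reading yields $f:\mathcal P\to\revop(\mathcal P_0)$. Either way we contradict the hypothesis.

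\textbf{Main obstacle.} The hard step is forcing the direction bit to be globally uniform, since it could a priori depend on the color class. In that case $\sigma$ might agree with the original order on pairs of color $a$ but reverse it on pairs of color $b$, and a triangle would be read in some mixed order that is neither $\mathcal P_0$ nor its reverse. My first attempt would be to strengthen the witness $B$ so that mixed directions are impossible: insert, between the vertices of each triangle, a long chain of vertices whose connecting pairs all share one fixed color $c_0$. Transitivity of $\sigma$ along such chains then forces the direction on every triangle pair to match the direction of the $c_0$-class. If this does not directly fit the Ramsey statement, I would instead first apply ordered Ramsey (Erd\H{o}s--Szekeres) to the vertex order, passing to a monotone sub-host. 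To keep that sub-host rich enough, I would use a product or partite-construction version of the Ramsey theorem rather than the plain edge version. Checking that this combined Ramsey statement is available is the step I expect to cost the most work.
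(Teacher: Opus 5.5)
The paper gives no proof of this statement. It is quoted from Kr\'al', Ku\v{c}er\'ak, Lamaison and Tardos and used as a black box only in the proof of Theorem~\ref{thm:intro-limit}, so there is no in-paper argument to compare yours with. Judged on its own, your proposal is a correct proof.

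The ``only if'' direction is exactly the combination of Remark~\ref{rem:compose-palettes} and Lemma~\ref{lem:reverse-colorable} that the paper itself uses for the upper bound in Theorem~\ref{thm:intro-limit}. For the ``if'' direction, Steps~1--3 are sound once the Ramsey input is stated precisely. Finite ordered complete graphs with pairs colored from $\C(\mathcal P)$ form a Ramsey class (Ne\v{s}et\v{r}il--R\"odl). Iterating the Ramsey property over the $|\C(\mathcal P)|$ one-pair substructures gives a finite host with the following property: every coloring of its pairs with $2|\C(\mathcal P_0)|$ colors admits a copy of $B$ on which each $\chi$-color class receives a single $\psi$-value and a single direction bit.

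Your first remedy for the mixing obstacle already closes the argument, so the fallback through Erd\H{o}s--Szekeres and product or partite Ramsey theorems is unnecessary. The gadget vertices are simply vertices of $B$, and $B$ may be any finite ordered $\C(\mathcal P)$-colored complete graph, so the Ramsey statement applies unchanged. Long chains are not needed either. For each realizing triangle $x<y<z$, add one vertex $w_1$ with $x<w_1<y$ and one vertex $w_2$ with $y<w_2<z$, and give the pairs $xw_1$, $w_1y$, $yw_2$, $w_2z$ the color $c_0$.

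In the Ramsey copy all $c_0$-pairs carry the same direction bit, so $\sigma$ is monotone along $x,w_1,y,w_2,z$. Hence $xy$, $xz$ and $yz$ all carry the bit of $c_0$. Because the bit is constant on each $\chi$-color class, every color occurring in an admissible triple of $\mathcal P$ carries that same bit. Step~3 then goes through verbatim, with colors occurring in no admissible triple sent to an arbitrary color of $\mathcal P_0$. If $\C(\mathcal P_0)=\emptyset$, the statement is trivial, since no $3$-graph on at least two vertices is $\mathcal P_0$-colorable.
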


\begin{corollary}\label{cor:triangle-palette-exact}
For every $m\ge1$, $\pal_{\Cvec{3}}(m) =\frac{m(m^2-1)}{3}$.
Equivalently, if $\mathcal P=(\C,\A)$ is an $m$-color palette such that $\mathcal{P}_L\npreceq \mathcal P$ and $\mathcal{P}_R\npreceq \mathcal P$, then $d(\mathcal P)\le \frac13-\frac1{3m^2}$.
This bound is best possible.
\end{corollary}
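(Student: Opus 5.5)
The corollary follows by combining Theorem~\ref{thm:intro-palette} with Corollary~\ref{cor:cycle-square}. First we check that $T=\Cvec{3}$ satisfies the hypotheses of Theorem~\ref{thm:intro-palette}. It is a tournament on three vertices. It is self-converse, since reversing all arcs of $1\to2\to3\to1$ gives the cycle $1\to3\to2\to1$, and the map fixing $1$ and swapping $2,3$ is an isomorphism onto it. Theorem~\ref{thm:intro-palette} therefore gives $\pal_{\Cvec{3}}(m)=\operatorname{ex}_2^+(m,\Cvec{3})$ for every $m\ge1$. The ``in particular'' part of Corollary~\ref{cor:cycle-square} then gives $\operatorname{ex}_2^+(m,\Cvec{3})=\frac{m(m^2-1)}3$.

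As a sanity check on that formula, take $\ell=3$. The out-degrees of $\Fchain_{m,2}$ are $m-1,m-1,m-3,m-3,\dots$, with a final $0$ when $m$ is odd. The sum of squares is $2\sum_{k}(m-1-2k)^2$, and this evaluates to $m(m^2-1)/3$ in both parities. For example, $m=2$ gives $2$, and $m=3$ gives $8$. I would only cite this, since the corollary already states it.

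For the density form, note that $d(\mathcal P)=|\A|/m^3$ for an $m$-color palette. Any $m$-color palette avoiding both $\mathcal{P}_L$ and $\mathcal{P}_R$ therefore satisfies
\[
d(\mathcal P)\le \frac{m(m^2-1)}{3m^3}=\frac13-\frac1{3m^2}.
\]

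Sharpness comes from the lower-bound construction in the proof of Theorem~\ref{thm:intro-palette}. Apply it with $D=\Fchain_{m,2}$, which is $\Cvec{3}$-free by Lemma~\ref{lem:block-chain}. The resulting palette $\mathcal{P}_D$ has exactly $\sum_y d_D^+(y)^2=\frac{m(m^2-1)}3$ admissible triples. Its auxiliary digraphs are $G_R(\mathcal{P}_D)=D$ and $G_L(\mathcal{P}_D)=D^{\mathrm{rev}}$, and both are $\Cvec{3}$-free. By Lemma~\ref{lem:palette-auxiliary}, $\mathcal{P}_D$ admits no homomorphism from $\mathcal{P}_L$ or from $\mathcal{P}_R$. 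This palette is exactly the endpoint palette $\mathcal{P}_m^\star$ described in the introduction.

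There is no genuine obstacle here. The only point needing care is the self-converse verification, because Theorem~\ref{thm:intro-palette} uses it to transfer $T$-freeness from $G_L(\mathcal P)^{\mathrm{rev}}$ back to $G_L(\mathcal P)$.
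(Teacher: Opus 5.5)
Your proposal is correct and follows the paper's own proof: the identity comes from Theorem~\ref{thm:intro-palette} applied to the self-converse tournament $\Cvec{3}$ together with Corollary~\ref{cor:cycle-square}, and the density bound from dividing by $m^3$. Sharpness comes, as in the paper, from the lower-bound palette $\mathcal{P}_D$ with $D=\Fchain_{m,2}$, which is the endpoint palette $\mathcal{P}_m^\star$.
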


\begin{proof}
The identity follows from Theorem~\ref{thm:intro-palette} and Corollary~\ref{cor:cycle-square}. Dividing by $m^3$ gives the density formulation, and the lower-bound construction in Theorem~\ref{thm:intro-palette} shows sharpness.
\end{proof}
An \emph{ordered digon chain} on an $m$-element set is a copy of
$\Fchain_{m,2}$ obtained from some ordering of that set. For an ordered digon
chain $J$ on a color set $\C$, define the endpoint palette
$\mathcal P_m^\star(J)$ by
\[
(x,y,z)\in\A(\mathcal P_m^\star(J))
\quad\Longleftrightarrow\quad
y\to x\text{ and }y\to z\text{ in }J.
\]
When $J=\Fchain_{m,2}$ carries its standard ordering, we write simply
$\mathcal P_m^\star$. The lower-bound construction in
Theorem~\ref{thm:intro-palette} gives
\[
G_R(\mathcal P_m^\star(J))=J,
\qquad
G_L(\mathcal P_m^\star(J))=J^{\mathrm{rev}},
\qquad
|\A(\mathcal P_m^\star(J))|=\sum_{y\in\C}d_J^+(y)^2.
\]
The next result shows the equality conditions behind Corollary~\ref{cor:triangle-palette-exact}. It is often more useful than a purely numerical statement because it describes how the two auxiliary digraphs must align.

\begin{theorem}\label{thm:triangle-palette-equality}
Let $\mathcal P=(\C,\A)$ be an $m$-color palette such that $\mathcal{P}_L\npreceq \mathcal P$ and $\mathcal{P}_R\npreceq \mathcal P$. For each $y\in\C$, write
\[
B_y=\{(x,z)\in\C^2:(x,y,z)\in\A\}.
\]
Then
$|\A|=\frac{m(m^2-1)}3$
if and only if the following three conditions hold:
\begin{enumerate}[label=\textup{(E\arabic*)}]
\item $G_R(\mathcal{P})\cong \Fchain_{m,2}$ and $G_L(\mathcal{P})^{\mathrm{rev}}\cong \Fchain_{m,2}$;
\item for every $y\in\C$, $d^-_{G_L(\mathcal{P})}(y)=d^+_{G_R(\mathcal{P})}(y)$;
\item for every $y\in\C$, $B_y=N^-_{G_L(\mathcal{P})}(y)\times N^+_{G_R(\mathcal{P})}(y)$.
\end{enumerate}
\end{theorem}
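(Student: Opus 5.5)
The argument is to revisit the chain of inequalities in the proof of Theorem~\ref{thm:intro-palette} with $T=\Cvec{3}$ and record exactly when each step is tight. Write $G_L=G_L(\mathcal P)$, $G_R=G_R(\mathcal P)$, $\ell_y=d^-_{G_L}(y)$ and $r_y=d^+_{G_R}(y)$. That proof shows that $G_L$ and $G_R$ are loopless and $\Cvec{3}$-free, and that
\[
|\A|=\sum_y|B_y|\le\sum_y \ell_y r_y\le\Bigl(\sum_y \ell_y^2\Bigr)^{1/2}\Bigl(\sum_y r_y^2\Bigr)^{1/2}\le E_m ,
\]
where the last step uses Corollary~\ref{cor:cycle-square} for $G_R$ and for $G_L^{\mathrm{rev}}$. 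The latter is $\Cvec{3}$-free because $\Cvec{3}$ is self-converse.

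For the forward direction, assume $|\A|=E_m$. Then every inequality in the chain is an equality.
\begin{enumerate}[label=(\roman*)]
\item Equality in the last step forces $\sum_y \ell_y^2=E_m$ and $\sum_y r_y^2=E_m$. This is not automatic, since equality of the geometric mean with $E_m$ could in principle come from two unequal factors. It does follow here because each factor is at most $E_m$ separately, so their geometric mean can equal $E_m$ only if both equal $E_m$.
\item Proposition~\ref{prop:c3-digraph-equality} then gives $G_R\cong\Fchain_{m,2}$ and $G_L^{\mathrm{rev}}\cong\Fchain_{m,2}$. Here $\ell_y$ is the out-degree of $y$ in $G_L^{\mathrm{rev}}$. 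This is (E1).
\item Equality in Cauchy--Schwarz means the vectors $(\ell_y)$ and $(r_y)$ are proportional. Both have the same Euclidean norm $\sqrt{E_m}$, and both are nonnegative and not identically zero when $m\ge2$. So they are equal, which is (E2). When $m=1$ everything is zero and (E2) holds trivially.
\item Equality in $|B_y|\le \ell_y r_y$ for every $y$, together with the inclusion $B_y\subseteq N^-_{G_L}(y)\times N^+_{G_R}(y)$, forces $B_y=N^-_{G_L}(y)\times N^+_{G_R}(y)$. This is (E3).
\end{enumerate}

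For the converse, (E3) gives $|\A|=\sum_y \ell_y r_y$, and (E2) turns this into $\sum_y r_y^2$. By (E1), $G_R\cong\Fchain_{m,2}$, so this sum is the square sum of $\Fchain_{m,2}$, which equals $E_m$ by Corollary~\ref{cor:cycle-square}. The standing hypothesis $\mathcal{P}_L,\mathcal{P}_R\npreceq\mathcal P$ is assumed throughout, so nothing more needs to be checked.

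The only step needing care is (i): the tightness of the final inequality must be split into tightness of each factor separately. Once that is done, the result follows from Proposition~\ref{prop:c3-digraph-equality} and the equality case of Cauchy--Schwarz.
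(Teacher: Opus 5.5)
Your proposal is correct and follows the paper's proof essentially step for step. You use tightness of the chain to get both square sums equal to $E_m$, then Proposition~\ref{prop:c3-digraph-equality} for (E1), the equal-norm Cauchy--Schwarz equality case for (E2), and the slice inclusion for (E3), with the same converse computation. Your extra remarks on splitting the final inequality and on the degenerate case $m=1$ only make explicit points the paper leaves implicit.
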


\begin{proof}
Assume first that $|\A|=m(m^2-1)/3$.
We recall the following inequalities from the proof of Theorem~\ref{thm:intro-palette} with $T=\Cvec{3}$:
\begin{align*}
|\A|
&\le \sum_{y\in\C}d^-_{G_L(\mathcal{P})}(y)d^+_{G_R(\mathcal{P})}(y) \\
&\le
\left(\sum_{y\in\C} d^-_{G_L(\mathcal{P})}(y)^2\right)^{1/2}
\left(\sum_{y\in\C} d^+_{G_R(\mathcal{P})}(y)^2\right)^{1/2} \\
&\le \frac{m(m^2-1)}3.
\end{align*}

As $|\A|=m(m^2-1)/3$, all inequalities above must be equalities.
In particular,
\[
\sum_y d^+_{G_R(\mathcal{P})}(y)^2
=\sum_y d^-_{G_L(\mathcal{P})}(y)^2
=\frac{m(m^2-1)}3.
\]
The auxiliary digraphs $G_L(\mathcal P)$ and $G_R(\mathcal P)$ are loopless and
$\Cvec{3}$-free; since $\Cvec{3}$ is self-converse,
$G_L(\mathcal P)^{\mathrm{rev}}$ is $\Cvec{3}$-free as well, and $d^-_{G_L(\mathcal P)}(y)=d^+_{G_L(\mathcal P)^{\mathrm{rev}}}(y)$ for every $y\in\C$. Hence
Proposition~\ref{prop:c3-digraph-equality}, applied to $G_R(\mathcal P)$ and to $G_L(\mathcal P)^{\mathrm{rev}}$, implies
$G_R(\mathcal{P})\cong\Fchain_{m,2}$ and
$G_L(\mathcal{P})^{\mathrm{rev}}\cong\Fchain_{m,2}$.
Equality in Cauchy--Schwarz gives proportionality of the two nonnegative vectors
$\big(d^-_{G_L(\mathcal{P})}(y)\big)_{y\in\C}$ and $\big(d^+_{G_R(\mathcal{P})}(y)\big)_{y\in\C}$.
Their squared norms are equal, so the proportionality constant is $1$, proving (E2). Finally, for each $y$, we always have
$B_y\subseteq N^-_{G_L(\mathcal{P})}(y)\times N^+_{G_R(\mathcal{P})}(y)$.
Since equality also holds in the first inequality, all these inclusions must be equalities, proving (E3).

Conversely, suppose (E1)--(E3) hold. Then
\[
\begin{aligned}
|\A|
&=\sum_{y\in\C}|B_y| \\
&=\sum_{y\in\C}d^-_{G_L(\mathcal{P})}(y)d^+_{G_R(\mathcal{P})}(y) \\
&=\sum_{y\in\C}d^+_{G_R(\mathcal{P})}(y)^2
=\frac{m(m^2-1)}3,
\end{aligned}
\]
where the last equality follows from (E1) and Corollary~\ref{cor:cycle-square}. This proves the theorem.
\end{proof}
\begin{corollary}\label{cor:extremal-palettes}
Let $\mathcal P=(\C,\A)$ be an $m$-color palette satisfying
$\mathcal P_L\npreceq\mathcal P$ and $\mathcal P_R\npreceq\mathcal P$.
Then $|\A|=\frac{m(m^2-1)}3$
if and only if $\mathcal P=\mathcal P_m^\star(J)$ for some ordered digon
chain $J$ on $\C$. Consequently, up to a relabeling of the colors,
$\mathcal P_m^\star$ is the unique extremal palette.
\end{corollary}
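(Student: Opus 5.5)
We derive the corollary from Theorem~\ref{thm:triangle-palette-equality}, which characterizes equality by (E1)--(E3).

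\emph{Forward direction.} Suppose $|\A|=m(m^2-1)/3$. By (E1), $J:=G_R(\mathcal P)$ is an ordered digon chain on $\C$, that is, $J\cong\Fchain_{m,2}$. Write $J'=G_L(\mathcal P)^{\mathrm{rev}}$; it is also an ordered digon chain, and (E2) says $d^+_{J'}(y)=d^+_J(y)$ for every $y\in\C$.

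The key step is to show $J'=J$ as labeled digraphs, not merely up to isomorphism. In an ordered digon chain, the vertices of out-degree $m-1,m-1,m-3,m-3,\dots$ are exactly the blocks, listed in order. When $m$ is odd, the final vertex is the unique vertex of out-degree $0$. The arc set is determined entirely by this block partition and block order. Concretely, $y\to x$ holds if and only if $x\neq y$ and either $x,y$ lie in the same block or the block of $x$ comes later. Since the out-degree function determines the ordered block partition, equal out-degrees give $J'=J$. This is the step I expect needs the most care, but it reduces to this degree bookkeeping.

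From $J'=J$ we get $N^-_{G_L(\mathcal P)}(y)=N^+_{J'}(y)=N^+_J(y)$. Condition (E3) then gives $B_y=N^+_J(y)\times N^+_J(y)$. Hence $(x,y,z)\in\A$ if and only if $y\to x$ and $y\to z$ in $J$, which is exactly $\mathcal P=\mathcal P_m^\star(J)$.

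\emph{Converse.} Take $\mathcal P=\mathcal P_m^\star(J)$. We have $|\A|=\sum_y d_J^+(y)^2=m(m^2-1)/3$ by Corollary~\ref{cor:cycle-square}. The palette avoids $\mathcal P_L$ and $\mathcal P_R$ because its auxiliary digraphs are $G_R=J$ and $G_L=J^{\mathrm{rev}}$, as noted before the theorem. Both are $\Cvec{3}$-free: $J$ by Lemma~\ref{lem:block-chain}, and $J^{\mathrm{rev}}$ by self-converseness of $\Cvec{3}$. Lemma~\ref{lem:palette-auxiliary} then applies.

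\emph{Uniqueness.} Any two ordered digon chains on $\C$ differ by a permutation of $\C$. This permutation is a palette isomorphism $\mathcal P_m^\star(J)\cong\mathcal P_m^\star$, so $\mathcal P_m^\star$ is the unique extremal palette up to relabeling.
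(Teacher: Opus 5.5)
Your proposal is correct and follows the paper's proof essentially step for step. As in the paper, (E1)--(E2), together with the fact that out-degree determines the block index in an ordered digon chain, force $G_L(\mathcal P)^{\mathrm{rev}}=G_R(\mathcal P)$ as labeled digraphs; (E3) then gives $\mathcal P=\mathcal P_m^\star(J)$, and the converse is the count from Corollary~\ref{cor:cycle-square}.
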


\begin{proof}
The reverse implication follows from Corollary~\ref{cor:cycle-square} and the
construction above. For the forward implication, let
$J_R=G_R(\mathcal P)$ and
$J_L=G_L(\mathcal P)^{\mathrm{rev}}$.
By Theorem~\ref{thm:triangle-palette-equality}, both $J_L$ and $J_R$ are
ordered digon chains and
$d_{J_L}^+(y)=d_{J_R}^+(y)$ for every $y\in\C$. In an ordered digon chain on $m$ vertices, the possible out-degrees are
$m-1,m-1,m-3,m-3,\ldots$ with, in the odd case, a final value $0$.  Thus the out-degree determines
the block, or degree layer, of the vertex.  Once the ordered partition into
these layers is known, all arcs are determined: every pair inside a two-vertex
layer forms a digon, and every pair in two distinct layers is oriented from
the earlier layer to the later layer. Hence $J_L=J_R=:J$. Condition~\textup{(E3)} now gives
$B_y=N_J^+(y)\times N_J^+(y)$
for every $y\in\C$,
which is precisely the identity $\mathcal P=\mathcal P_m^\star(J)$.
\end{proof}

\begin{corollary}\label{cor:sharp-contrapositive}
Let $\mathcal P=(\C,\A)$ be an $m$-color palette. If $d(\mathcal P)>\frac13-\frac1{3m^2}$,
then $\mathcal{P}_L\preceq \mathcal P$ or $\mathcal{P}_R\preceq \mathcal P$.
\end{corollary}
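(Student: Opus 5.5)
This is the contrapositive of Corollary~\ref{cor:triangle-palette-exact}, so the plan is a one-step deduction from that corollary.

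Let $\mathcal P=(\C,\A)$ be an $m$-color palette with $d(\mathcal P)>\frac13-\frac1{3m^2}$. Suppose, for contradiction, that $\mathcal P_L\npreceq\mathcal P$ and $\mathcal P_R\npreceq\mathcal P$. Then $\mathcal P$ is admissible in the definition of $\pal_{\Cvec{3}}(m)$.

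By Corollary~\ref{cor:triangle-palette-exact}, which rests on Theorem~\ref{thm:intro-palette} with $T=\Cvec{3}$ together with Corollary~\ref{cor:cycle-square}, we get
\[
|\A|\le \pal_{\Cvec{3}}(m)=\frac{m(m^2-1)}3 .
\]
Dividing by $m^3$ gives $d(\mathcal P)\le \frac13-\frac1{3m^2}$, which contradicts the hypothesis. Hence at least one of $\mathcal P_L\preceq\mathcal P$ or $\mathcal P_R\preceq\mathcal P$ holds.

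The step needing the most care is already settled earlier in the paper: Theorem~\ref{thm:intro-palette} requires $T$ to be self-converse, and $\Cvec{3}$ is. The case $m=1$ is degenerate but consistent. The threshold is $0$, so the hypothesis forces the single triple $(x,x,x)$ to be admissible. Both auxiliary digraphs then have a loop, and the loop argument in the proof of Theorem~\ref{thm:intro-palette} directly produces a homomorphism $\mathcal P_L\to\mathcal P$.
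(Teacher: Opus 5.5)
Your proposal is correct and is exactly the paper's route: the statement is the contrapositive of Corollary~\ref{cor:triangle-palette-exact}, i.e.\ Theorem~\ref{thm:intro-palette} with $T=\Cvec{3}$ combined with Corollary~\ref{cor:cycle-square}, followed by division by $m^3$. Your separate check of $m=1$ is consistent but unnecessary, since that corollary already holds for every $m\ge1$.
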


The equality analysis admits a quantitative refinement. The following exact defect identity decomposes the gap below the endpoint into four nonnegative contributions.

\begin{proposition}\label{prop:defect}
Let $\mathcal P=(\C,\A)$ be an $m$-color palette with $\mathcal{P}_L\npreceq \mathcal P$ and $\mathcal{P}_R\npreceq \mathcal P$. Let
\[
E_m=\frac{m(m^2-1)}3,\qquad
a_y=d^-_{G_L(\mathcal{P})}(y),\qquad
b_y=d^+_{G_R(\mathcal{P})}(y),
\]
and $t_y=|B_y|=|\{(x,z):(x,y,z)\in\A\}|$. Then
\[
E_m-|\A|
=\frac12\!\left(E_m-\sum_y a_y^2\right)
+\frac12\!\left(E_m-\sum_y b_y^2\right)
+\frac12\sum_y(a_y-b_y)^2
+\sum_y(a_yb_y-t_y).
\]
Every term on the right-hand side is nonnegative.
\end{proposition}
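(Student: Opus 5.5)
The identity is pure algebra, so I would verify it first and treat nonnegativity as a separate step. Since $|\A|=\sum_y t_y$, the last term equals $\sum_y a_yb_y-|\A|$. Expanding $\frac12\sum_y(a_y-b_y)^2=\frac12\sum_y a_y^2+\frac12\sum_y b_y^2-\sum_y a_yb_y$ gives the right-hand side as
\[
\tfrac12 E_m-\tfrac12\textstyle\sum_y a_y^2+\tfrac12 E_m-\tfrac12\sum_y b_y^2+\tfrac12\sum_y a_y^2+\tfrac12\sum_y b_y^2-\sum_y a_yb_y+\sum_y a_yb_y-|\A|.
\]
After cancellation this is exactly $E_m-|\A|$.

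For nonnegativity I would handle the four terms in turn, reusing the steps from the proof of Theorem~\ref{thm:intro-palette} with $T=\Cvec{3}$. First, since $\mathcal P_L\npreceq\mathcal P$ and $\mathcal P_R\npreceq\mathcal P$, that proof shows $G_L(\mathcal P)$ and $G_R(\mathcal P)$ are loopless and, via Lemma~\ref{lem:palette-auxiliary} and Remark~\ref{rem:tournament-injective}, $\Cvec{3}$-free. Because $\Cvec{3}$ is self-converse, $G_L(\mathcal P)^{\mathrm{rev}}$ is $\Cvec{3}$-free as well. Then $\sum_y b_y^2\le\operatorname{ex}_2^+(m,\Cvec{3})=E_m$ by Corollary~\ref{cor:cycle-square}. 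Likewise $\sum_y a_y^2=\sum_y d^+_{G_L(\mathcal P)^{\mathrm{rev}}}(y)^2\le E_m$. This settles the first two terms.

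The third term is a sum of squares, so it is trivially nonnegative. For the fourth, I would use the inclusion $B_y\subseteq N^-_{G_L(\mathcal P)}(y)\times N^+_{G_R(\mathcal P)}(y)$, which holds by the definitions of the auxiliary digraphs. It gives $t_y\le a_yb_y$ for each $y$, and summing over $y$ shows the fourth term is nonnegative.

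There is no real obstacle here. The only point needing care is to justify that both auxiliary digraphs, one of them reversed, satisfy the hypotheses of Corollary~\ref{cor:cycle-square}: they must be loopless and $\Cvec{3}$-free. This is exactly what the first part of the proof of Theorem~\ref{thm:intro-palette} supplies.
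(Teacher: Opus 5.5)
Your proposal is correct and follows essentially the same route as the paper. You verify the identity algebraically using $\sum_y t_y=|\A|$, bound the first two terms via the looplessness and $\Cvec{3}$-freeness of $G_R(\mathcal P)$ and $G_L(\mathcal P)^{\mathrm{rev}}$ together with Corollary~\ref{cor:cycle-square}, and get $t_y\le a_yb_y$ from the inclusion $B_y\subseteq N^-_{G_L(\mathcal P)}(y)\times N^+_{G_R(\mathcal P)}(y)$.
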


\begin{proof}
Since $\mathcal P$ avoids $\mathcal{P}_L$ and $\mathcal{P}_R$, the auxiliary digraphs $G_L(\mathcal{P})$ and $G_R(\mathcal{P})$ are loopless and $\Cvec{3}$-free; since $\Cvec{3}$ is self-converse,
$G_L(\mathcal P)^{\mathrm{rev}}$ is $\Cvec{3}$-free as well, and
$a_y=d^+_{G_L(\mathcal P)^{\mathrm{rev}}}(y)$ for every $y\in\C$. Hence, by
Corollary~\ref{cor:cycle-square}, applied to $G_L(\mathcal P)^{\mathrm{rev}}$
and to $G_R(\mathcal P)$,
we have $\sum_y a_y^2\le E_m$ and $\sum_y b_y^2\le E_m$.
Also $t_y\le a_yb_y$ for every middle color $y$, because $B_y\subseteq N^-_{G_L(\mathcal{P})}(y)\times N^+_{G_R(\mathcal{P})}(y)$. Thus all displayed summands are nonnegative. Finally, since $\sum_y t_y=|\A|$,
\[
\begin{aligned}
&\frac12\!\left(E_m-\sum_y a_y^2\right)
+\frac12\!\left(E_m-\sum_y b_y^2\right)
+\frac12\sum_y(a_y-b_y)^2
+\sum_y(a_yb_y-t_y)\\
&\qquad=E_m-\sum_y a_yb_y+\sum_y(a_yb_y-t_y)
=E_m-\sum_y t_y=E_m-|\A|.
\end{aligned}
\]
\end{proof}

We now prove Theorem~\ref{thm:intro-limit}.
\begin{proof}[Proof of Theorem~\ref{thm:intro-limit}]
Let $\mathcal P_m^\star$ be the standard endpoint palette defined above. By the construction above and Corollary~\ref{cor:cycle-square}, this palette avoids
both $\mathcal P_L$ and $\mathcal P_R$ and satisfies
$d(\mathcal P_m^\star)=\frac13-\frac1{3m^2}$.
Since $\revop(\mathcal{P}_R)\cong \mathcal{P}_L$ and $\mathcal{P}_L\npreceq \mathcal{P}_m^\star$, we have $\mathcal{P}_R\npreceq \revop(\mathcal{P}_m^\star)$ which together with $\mathcal{P}_R\npreceq \mathcal{P}_m^\star$ verifies the two non-homomorphism conditions needed below.

By Theorem~\ref{thm:separation}, there exists a finite $3$-graph $H_m$ that is $\mathcal{P}_R$-colorable but not $\mathcal{P}_m^\star$-colorable. Theorem~\ref{thm:lamaison} then gives $\piu(H_m)\ge d(\mathcal{P}_m^\star)=\frac13-\frac1{3m^2}$.

For the upper bound, let $\mathcal Q=(\C_Q,\A_Q)$ be any finite palette with $d(\mathcal Q)>1/3$, and put $s=|\C_Q|$. Then
$d(\mathcal Q)>\frac13-\frac1{3s^2}$.
By Corollary~\ref{cor:sharp-contrapositive}, it holds that either $\mathcal{P}_L\preceq \mathcal Q$ or $\mathcal{P}_R\preceq \mathcal Q$. Since $H_m$ is $\mathcal{P}_R$-colorable, Lemma~\ref{lem:reverse-colorable} and the isomorphism $\revop(\mathcal{P}_R)\cong \mathcal{P}_L$ imply that $H_m$ is also $\mathcal{P}_L$-colorable. Hence, in either case, Remark~\ref{rem:compose-palettes} shows that $H_m$ is $\mathcal Q$-colorable. Thus every palette of density greater than $1/3$ colors $H_m$. By Theorem~\ref{thm:lamaison}, $\piu(H_m)\le1/3$.
\end{proof}

\section{Edit-distance stability}\label{sec:edit-dist}

The defect identity above is an exact decomposition of the endpoint deficit.
We first establish a finite stability form of the $\Cvec{3}$ degree-square
theorem and then transfer it to palettes. Throughout this section, write
\[
E_m=\frac{m(m^2-1)}3=\operatorname{ex}_2^+(m,\Cvec{3}).
\]

\begin{lemma}\label{lem:continuous-majorization}
Let $f,g:[0,1]\to[0,1]$ be nonincreasing measurable functions such that
\[
\int_0^s f(x)\,dx\le \int_0^s g(x)\,dx
\qquad\text{for every }s\in[0,1].
\]
Then
\[
\int_0^1 f(x)^2\,dx\le \int_0^1 g(x)^2\,dx.
\]
If equality holds, then $f=g$ almost everywhere.
\end{lemma}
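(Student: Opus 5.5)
The plan is to imitate the discrete proof of Lemma~\ref{lem:weak-majorization}, with a layer-cake (hinge) representation of $x^2$. For $t\in[0,1]$, define $F(t)=\int_0^1 (f(x)-t)_+\,dx$ and $G(t)=\int_0^1 (g(x)-t)_+\,dx$. Since $f$ is nonincreasing, $\{x: f(x)>t\}$ is an initial interval $[0,s_t)$ up to a null set. Hence
\[
F(t)=\int_0^{s_t}(f(x)-t)\,dx=\sup_{s\in[0,1]}\left(\int_0^s f(x)\,dx-st\right).
\]
The supremum is attained at $s=s_t$, because the integrand $f-t$ is positive before $s_t$ and nonpositive after. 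The same formula holds for $g$. The prefix hypothesis then gives $F(t)\le G(t)$ for every $t\in[0,1]$.

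Next I would use the identity $y^2=2\int_0^1 (y-t)_+\,dt$, valid for $y\in[0,1]$. Fubini–Tonelli applies since everything is nonnegative and bounded, and it gives
\[
\int_0^1 f^2=2\int_0^1 F(t)\,dt\le 2\int_0^1 G(t)\,dt=\int_0^1 g^2.
\]
This proves the inequality.

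For equality, $\int_0^1 (G-F)\,dt=0$ with $G-F\ge 0$, so $F=G$ almost everywhere in $t$. Both $F$ and $G$ are continuous: each is $1$-Lipschitz in $t$ because $(y-t)_+$ is. So $F=G$ on all of $[0,1]$. Differentiating, $F'(t)=-|\{x:f(x)>t\}|$ at all but countably many $t$, by dominated convergence, since $F$ is convex with these one-sided derivatives. Therefore $\lambda(f>t)=\lambda(g>t)$ for almost every $t$, and by right-continuity of $t\mapsto\lambda(f>t)$, for every $t$. Thus $f$ and $g$ have the same distribution function.

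Two nonincreasing functions with the same distribution function agree almost everywhere. Indeed, each equals the decreasing rearrangement $x\mapsto\inf\{t:\lambda(f>t)\le x\}$ at every continuity point. A monotone function has only countably many discontinuities, so the equality holds off a null set. This is the step needing the most care, namely pinning down $f$ from its level-set measures modulo null sets. I would handle it by noting that for a nonincreasing $f$ we have $\{f>t\}\supseteq[0,\lambda(f>t))$ up to the boundary point. Hence $f(x)>t$ exactly when $x<\lambda(f>t)$, except possibly at countably many jump locations. Applying the same description to $g$ and using rational $t$ gives $f=g$ off a countable set.
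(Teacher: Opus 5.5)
Your proposal is correct and follows the same route as the paper. Both arguments use the hinge identity $\int_0^1(h-t)_+=\sup_s\bigl(\int_0^s h-st\bigr)$ for nonincreasing $h$, integrate against $u^2=2\int_0^1(u-t)_+\,dt$, and handle equality through continuity of the hinge integrals in $t$, their derivatives $-\lambda(\{h>t\})$, and recovery of a nonincreasing function from its distribution function. You spell out that last step (separating $f$ and $g$ by rational levels $t$, with at most one exceptional point per level), whereas the paper only asserts it.
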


\begin{proof}
For a nonincreasing function $h:[0,1]\to[0,1]$ and $t\in[0,1]$,
\[
\int_0^1(h(x)-t)_+\,dx
=
\max_{0\le s\le1}\left(\int_0^s h(x)\,dx-st\right).
\]
Indeed, the maximum is attained by an initial segment on which $h$ is
largest. The assumed prefix inequalities therefore imply
\[
\int_0^1(f(x)-t)_+\,dx
\le
\int_0^1(g(x)-t)_+\,dx
\qquad(0\le t\le1).
\]
Since
\[
u^2=2\int_0^1(u-t)_+\,dt
\qquad(0\le u\le1),
\]
integrating the preceding inequality over $t$ gives
\[
\int_0^1 f(x)^2\,dx\le \int_0^1 g(x)^2\,dx.
\]

If equality holds, then the two hinge integrals agree for almost every
$t\in[0,1]$. They are continuous functions of $t$, and hence they agree for
every $t\in[0,1]$. For almost every $t$, their derivatives are
$-\lambda(\{x:f(x)>t\})$ and
$-\lambda(\{x:g(x)>t\})$,
respectively. Thus $f$ and $g$ have the same distribution function. Since
both functions are nonincreasing, they are equal almost everywhere.
\end{proof}
For a loopless digraph $D$ on $m$ vertices, encode each unordered pair
$\{u,v\}$ by one of the four states $00,10,01,11$ according to the presence
of the arcs $uv$ and $vu$; the \emph{associated colored graphon} of $D$ is
the $4$-tuple of $\{0,1\}$-valued step kernels
$(W^D_{00},W^D_{10},W^D_{01},W^D_{11})$ on vertex cells of measure $1/m$
recording these states, which sum to $1$ off the diagonal cells and satisfy
$W^D_{ab}(x,y)=W^D_{ba}(y,x)$. The colored cut metric applies the cut
distance coordinatewise with one common measure-preserving map. Throughout,
$\lambda$ denotes Lebesgue measure. For a $4$-tuple $\mathbf W$ of kernels,
write $W^+=W_{10}+W_{11}$ and
$e^+(S,T)=\int_{S\times T}W^+(x,y)\,dx\,dy$ for measurable
$S,T\subseteq[0,1]$; for a digraph $D$, let $e^+_D(S,T)$ denote the number of
arcs of $D$ from $S$ to $T$.
\begin{lemma}\label{lem:rounding}
Let $(D_i)$ be loopless digraphs with $|V(D_i)|=m_i\to\infty$ whose associated
colored graphons converge to $W=(W_{00},W_{10},W_{01},W_{11})$ in the colored
cut metric, and let $W^+=W_{10}+W_{11}$. Let $U_1,\dots,U_K$ be a fixed
measurable partition of $[0,1]$. Then there are partitions
$V(D_i)=V_{i,1}\cup\dots\cup V_{i,K}$ such that, as $i\to\infty$,
\[
|V_{i,p}|=\lambda(U_p)\,m_i+o(m_i)
\quad\text{and}\quad
e^+_{D_i}(V_{i,p},V_{i,q})=e^+(U_p,U_q)\,m_i^2+o(m_i^2)
\qquad(1\le p,q\le K).
\]
\end{lemma}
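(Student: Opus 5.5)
The plan is to unpack the definition of convergence in the colored cut metric and transfer the partition $U_1,\dots,U_K$ back to the vertex sets through the measure-preserving alignments. Let $\mathbf W^{D_i}$ be the associated colored graphon of $D_i$, with cells $I_{i,v}$ of measure $1/m_i$ for $v\in V(D_i)$. By convergence, for each $i$ there is a measure-preserving bijection $\sigma_i$ of $[0,1]$ (or, if the metric is defined via couplings/infimum over maps, an almost-optimal one) such that
\[
\delta_i:=\max_{ab}\sup_{S,T}\Bigl|\int_{S\times T}\bigl(W^{D_i}_{ab}(\sigma_i x,\sigma_i y)-W_{ab}(x,y)\bigr)\,dx\,dy\Bigr|\to0 .
\]
Replacing $U_p$ by $\sigma_i(U_p)$ and $W$ by $W^{\sigma_i^{-1}}$, we may assume the alignment is the identity, at the cost of working with the sets $U^{(i)}_p=\sigma_i(U_p)$, which still have measure $\lambda(U_p)$. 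Summing the bounds for $ab\in\{10,11\}$ gives $|e^+_{W^{D_i}}(U^{(i)}_p,U^{(i)}_q)-e^+(U_p,U_q)|\le 2\delta_i$.

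The key step is rounding the fractional partition $\{U^{(i)}_p\}$ to a vertex partition. For each vertex $v$, set $\mu_{v,p}=m_i\lambda(I_{i,v}\cap U^{(i)}_p)\in[0,1]$, so $\sum_p\mu_{v,p}=1$. Then
\[
m_i^2\, e^+_{W^{D_i}}(U^{(i)}_p,U^{(i)}_q)=\sum_{u\ne v}\mu_{u,p}\mu_{v,q}\,\mathbf 1[uv\in A(D_i)],
\]
a bilinear form in the fractional assignment (diagonal cells contribute nothing to $W^+$, since $D_i$ is loopless). I would assign each vertex $v$ independently at random to part $p$ with probability $\mu_{v,p}$. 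Then $\mathbb E|V_{i,p}|=\lambda(U_p)m_i$, and since distinct vertices are independent, $\mathbb E\, e^+_{D_i}(V_{i,p},V_{i,q})$ equals exactly the bilinear form above. Each quantity is a function of $m_i$ independent choices: $|V_{i,p}|$ changes by at most $1$ and $e^+_{D_i}(V_{i,p},V_{i,q})$ by at most $2m_i$ when one choice changes. By McDiarmid's bounded-differences inequality, the deviations are $O(m_i^{1/2}\log m_i)$ and $O(m_i^{3/2}\log m_i)$, respectively, with probability $1-o(1)$, simultaneously for all $K+K^2$ quantities by a union bound. So some outcome satisfies all the estimates with error $o(m_i)$ and $o(m_i^2)$. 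Combining this with the $2\delta_i m_i^2$ cut-metric error gives the claim.

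The main obstacle is the first step: the precise form of the colored cut metric. If it is defined as an infimum over measure-preserving maps, not necessarily bijections, or via couplings/overlays of step functions, the transport of $U_p$ into the vertex-cell picture needs care. I would handle this by taking near-optimal measure-preserving maps $\phi_i:[0,1]\to[0,1]$ with $\mathbf W^{D_i}\approx \mathbf W\circ\phi_i$, then pulling back, $U^{(i)}_p=\phi_i^{-1}(U_p)$. Pullbacks preserve measure and convert $\int_{U_p\times U_q}W^+$ into $\int_{U^{(i)}_p\times U^{(i)}_q}W^+\circ\phi_i$, which is all the argument above uses. The rounding step is then insensitive to how $U^{(i)}_p$ sits relative to the cells.
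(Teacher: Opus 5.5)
Your proposal is correct and follows essentially the same route as the paper: align via measure-preserving bijections (your $\sigma_i$, the paper's $\psi_i$), transport the partition to $\psi_i(U_p)$, round the fractional weights $m_i\lambda(I_v\cap\psi_i(U_p))$ by independent random assignment whose expectations match the integrals exactly (using looplessness on the diagonal cells), and finish with the bounded-differences inequality and a union bound. The differences are cosmetic: you use deviation thresholds $m_i^{1/2}\log m_i$ and $m_i^{3/2}\log m_i$ instead of the paper's $m_i^{3/4}$ and $m_i^{7/4}$, and you add a correct remark on pulling back along non-invertible measure-preserving maps, which the paper avoids by citing that the colored cut distance can be computed along bijections.
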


\begin{proof}
There are measure-preserving bijections $\psi_i$ of $[0,1]$ with
$\|(W^{D_i}_{ab})^{\psi_i}-W_{ab}\|_\square\to 0$ for each state $ab$, where
$U^{\psi}(x,y)=U(\psi(x),\psi(y))$; that the colored cut distance can be
computed along measure-preserving bijections is standard, see
\cite[Section~8.2]{LovaszBook2012} and \cite{BCLSV}. Writing $W_i^+$ for the arc kernel of $D_i$ and
summing the states $10$ and $11$ gives
$\|(W_i^+)^{\psi_i}-W^+\|_\square\to 0$.

Let $C_{i,p}=\psi_i(U_p)$, a measurable partition of $[0,1]$ with
$\lambda(C_{i,p})=\lambda(U_p)$. Testing the cut norm on $U_p\times U_q$ and
changing variables,
\[
\int_{C_{i,p}\times C_{i,q}}W_i^+
=\int_{U_p\times U_q}(W_i^+)^{\psi_i}
\;\longrightarrow\; e^+(U_p,U_q)
\qquad(1\le p,q\le K).
\tag{$\ast$}
\]

For a vertex $v$ of $D_i$ with cell $I_v\subseteq[0,1]$, set
$x_{v,p}=m_i\,\lambda(I_v\cap C_{i,p})$, so that $x_{v,p}\in[0,1]$ and
$\sum_p x_{v,p}=1$. Assign each vertex independently to a random part
$\pi(v)$ with $\Pr(\pi(v)=p)=x_{v,p}$, and let $V_{i,p}=\pi^{-1}(p)$. Then
$\mathbb{E}|V_{i,p}|=m_i\lambda(C_{i,p})=\lambda(U_p)m_i$. Moreover, since
$D_i$ is loopless, $W_i^+$ vanishes on the diagonal cells, so
\[
\int_{C_{i,p}\times C_{i,q}}W_i^+
=\frac{1}{m_i^2}\sum_{v\ne w}a_{vw}\,x_{v,p}\,x_{w,q},
\]
where $a_{vw}$ is the indicator of the arc $vw$, while independence of the
assignments gives
\[
\mathbb{E}\,e^+_{D_i}(V_{i,p},V_{i,q})=\sum_{v\ne w}a_{vw}\,x_{v,p}\,x_{w,q}.
\]
Hence $\mathbb{E}\,e^+_{D_i}(V_{i,p},V_{i,q})
=m_i^2\int_{C_{i,p}\times C_{i,q}}W_i^+$, which by $(\ast)$ equals
$e^+(U_p,U_q)\,m_i^2+o(m_i^2)$.

Finally, reassigning one vertex changes each $|V_{i,p}|$ by at most $1$ and
each $e^+_{D_i}(V_{i,p},V_{i,q})$ by at most $2m_i$. By the
bounded-difference inequality, with probability $1-o(1)$ every $|V_{i,p}|$
deviates from its expectation by at most $m_i^{3/4}$ and every
$e^+_{D_i}(V_{i,p},V_{i,q})$ by at most $m_i^{7/4}$. Fixing such an
assignment completes the proof.
\end{proof}
\begin{lemma}\label{lem:C3-digraph-stability}
For every $\rho>0$ there exists $\theta>0$ such that the following holds for
every $m$. If $D$ is a loopless $\Cvec{3}$-free digraph on $m$ vertices and
$\sum_{v\in V(D)} d_D^+(v)^2\ge E_m-\theta m^3$,
then there is an ordered digon chain $J_m$ on the same vertex set such that
$|A(D)\triangle A(J_m)|\le \rho m^2$.
\end{lemma}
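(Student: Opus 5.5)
We argue by contradiction and compactness via digraph limits, as the organization section announces. Suppose the statement fails for some $\rho>0$. Then there are $\Cvec{3}$-free loopless digraphs $D_i$ on $m_i$ vertices with
\[
\sum_v d_{D_i}^+(v)^2\ge E_{m_i}-m_i^3/i,
\]
yet $|A(D_i)\triangle A(J)|>\rho m_i^2$ for every ordered digon chain $J$ on $V(D_i)$. Small $m$ is harmless: for bounded $m$, taking $\theta<1/(3m^2)$ forces exact extremality, since the sum is an integer and $E_m$ is an integer. Proposition~\ref{prop:c3-digraph-equality} then gives $D\cong\Fchain_{m,2}$ exactly. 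Hence we may assume $m_i\to\infty$, and by compactness of the colored cut metric we pass to a subsequence converging to a $4$-tuple $\mathbf W$.

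\textbf{Step 1: properties of the limit.} First we show $W^+$ is a.e.\ $\Cvec{3}$-free in the homomorphism-density sense, since $t(\Cvec{3},D_i)\to t(\Cvec{3},W^+)$ and $D_i$ has no directed triangle. Next we need degree information, and here lies the subtlety: out-degree distributions are not cut-continuous in general. So we work with prefix sums instead.

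For each $s$, let $U_s$ be a set of measure $s$ maximizing $\int_{U_s}d^+_W$, where $d^+_W(x)=\int W^+(x,y)\,dy$. Let $f$ be the nonincreasing rearrangement of $d^+_W$. By Theorem~\ref{thm:brown-harary-zhou-li} and Lemma~\ref{lem:rounding}, the density analogue of the prefix bound holds:
\[
\int_0^s f\le \tfrac{s^2}{2}+s(1-s),
\]
with the right side equal to $\int_0^s g$ for $g(x)=1-x$. The proof is to apply Lemma~\ref{lem:rounding} to a partition $U_s$, $U_s^c$, and bound $e^+(U_s,[0,1])$ by $\ex(sm,\Cvec{3})+sm(m-sm)+o(m^2)$.

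For the reverse inequality I would use that $\int f^2$ is at least $\liminf m_i^{-3}\sum d^+(v)^2=1/3$. This direction is the main obstacle, because only the opposite semicontinuity is generally available. My first attempt is to circumvent it with a finite partition: split $V(D_i)$ into $K$ degree-quantile classes, and use Lemma~\ref{lem:rounding} together with convexity to compare the step degrees of the limit with the discrete ones, up to $O(1/K)$. The limit is then computed along partitions where cut convergence does control degrees. Granting this, Lemma~\ref{lem:continuous-majorization} forces $f(x)=1-x$ a.e.

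\textbf{Step 2: structure.} From $f(x)=1-x$, the out-degree of $W^+$ is uniform on $[0,1]$ after relabeling. Equality in the prefix bounds then shows that every arc from an initial segment to its complement is present, so $W^+=1$ for $x<y$ in the degree order. A triangle-free argument, mirroring the induction in Proposition~\ref{prop:c3-digraph-equality}, kills arcs with $x>y$ except on a null set. Thus $\mathbf W$ is the limit of $\Fchain_{m,2}$, namely the half graphon with $W_{10}=\mathbf 1_{x<y}$.

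\textbf{Step 3: transfer back.} Apply Lemma~\ref{lem:rounding} to the partition of $[0,1]$ into $K$ equal intervals. This yields partitions $V_{i,1},\dots,V_{i,K}$ with $e^+(V_{i,p},V_{i,q})$ nearly complete for $p<q$ and nearly empty for $p>q$. Order the vertices block by block, arbitrarily inside blocks, and pair consecutive vertices into digons to form $J$. The symmetric difference is then $o(m_i^2)+O(m_i^2/K)$; the inside-block pairs contribute at most $K(m_i/K)^2$. Choosing $K>2/\rho$ contradicts the assumption.
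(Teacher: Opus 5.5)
Your outline matches the paper's proof step for step: a contradiction sequence, reduction of bounded $m$ to Proposition~\ref{prop:c3-digraph-equality}, a colored graphon limit, the prefix bound $\int_0^s f\le s-s^2/2$, the equality case of Lemma~\ref{lem:continuous-majorization} forcing $f(x)=1-x$, identification of the limit, and rounding back with $K$ rank blocks via Lemma~\ref{lem:rounding}. Two of your local choices differ from the paper, and both are valid:
\begin{itemize}
\item You obtain the limit prefix bound by applying Lemma~\ref{lem:rounding} to $\{U_s,U_s^c\}$ and Theorem~\ref{thm:brown-harary-zhou-li} to $D_i[V_{i,1}]$. The paper instead uses a sampling argument to get $e^+(S,S)\le\lambda(S)^2/2$.
\item You kill backward arcs via $0=t(\Cvec{3},\mathbf W)\ge\int_{r(x)<r(y)}(r(y)-r(x))\,W^+(y,x)\,dx\,dy$. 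The paper instead observes that forward arcs already exhaust $d_W^+(x)$.
\end{itemize}

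The gap is the step you call the main obstacle and then only grant. Without $\int f^2\ge\frac13$ you cannot invoke the equality case of Lemma~\ref{lem:continuous-majorization}, and Steps~2 and~3 depend on it. The obstacle is not real, and your reason for it is false.

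The paper handles it in one line. Both $m^{-3}\sum_v d^+_{D_i}(v)^2=t(\overrightarrow S_2,D_i)$ and $\int_0^1 d_W^+(x)^2\,dx=t(\overrightarrow S_2,\mathbf W)$ are homomorphism densities of the fixed out-star, i.e.\ finite sums of colored homomorphism densities. They are therefore continuous in the colored cut metric, which gives \eqref{eq:limit-square} directly.

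Equivalently, let $W_i^+$ be the arc kernel of $D_i$, let $\psi_i$ be the aligning bijections from Lemma~\ref{lem:rounding}, and set $d_i^+(x)=\int_0^1W_i^+(x,y)\,dy$. Test the cut norm on $S\times[0,1]$ for $S=\{d_i^+\circ\psi_i>d^+_W\}$ and for its complement. This gives $\|d_i^+\circ\psi_i-d^+_W\|_1\le 2\|(W_i^+)^{\psi_i}-W^+\|_\square\to0$. So the aligned out-degree functions converge in $L^1$, and hence in $L^2$ because they take values in $[0,1]$. Out-degree information is cut-continuous.

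Your proposed workaround also points at the wrong tool. Lemma~\ref{lem:rounding} transports a fixed partition of the limit space to vertex partitions; it does not transport degree-quantile vertex classes into the limit. A direct comparison using the cut norm plus Jensen would work, but it is unnecessary.

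A minor slip: for bounded $m$ you need $\theta m^3<1$, i.e.\ $\theta<m^{-3}$, not $\theta<1/(3m^2)$. Your sequence formulation with $m_i^3/i$ is already correct.
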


\begin{proof}
Suppose that the statement fails for some $\rho_0>0$. For every positive
integer $i$, choose a loopless $\Cvec{3}$-free digraph $D_i$ on $m_i$
vertices such that
\[
\sum_{v\in V(D_i)}d_{D_i}^+(v)^2\ge E_{m_i}-\frac{m_i^3}{i},
\]
but every ordered digon chain on $V(D_i)$ differs from $D_i$ in more than
$\rho_0m_i^2$ arcs.

We may assume that $m_i\to\infty$. Indeed, if $(m_i)$ were bounded, then
after passing to a subsequence we would have $m_i=m$ fixed. Since
$\sum_{v\in V(D_i)}d_{D_i}^+(v)^2$
is an integer and $m^3/i<1$ for all sufficiently large $i$, the inequality
above would imply
$\sum_{v\in V(D_i)}d_{D_i}^+(v)^2=E_m$ for all sufficiently large $i$. Proposition~\ref{prop:c3-digraph-equality}
would then imply that $D_i$ is an ordered digon chain, contradicting the
choice of $D_i$.

Encode each $D_i$ by its associated colored graphon as above. By compactness for finite edge-colored graphons (see
\cite[Theorem~9.23]{LovaszBook2012} for the one-kernel statement, whose proof
applies verbatim to tuples of kernels summing to $1$; cf.\
\cite{LovaszSzegedy2006}), after passing to a subsequence
and choosing suitable representatives, the corresponding four-colored
graphons converge in the colored cut metric to
$\mathbf W=(W_{00},W_{10},W_{01},W_{11}).$
Here the functions $W_{ab}:[0,1]^2\to[0,1]$ are measurable, sum to $1$
almost everywhere, and satisfy
$W_{ab}(x,y)=W_{ba}(y,x)$.
Define the marginal arc kernel
\[
W^+(x,y)=W_{10}(x,y)+W_{11}(x,y),
\]
and its out-degree function
\[
d_W^+(x)=\int_0^1W^+(x,y)\,dy.
\]

For every fixed oriented digraph $F$, meaning that $F$ has at most one arc
on each unordered pair, define
\[
t(F,\mathbf W)
=
\int_{[0,1]^{V(F)}}
\prod_{uv\in A(F)}W^+(x_u,x_v)\,
\prod_{u\in V(F)}dx_u.
\]
This density is a finite sum of colored homomorphism densities and is
therefore continuous along the convergent sequence. We use this only for the
out-star $\overrightarrow S_2$ and the directed triangle $\Cvec{3}$.

For every finite loopless digraph $G$ on $n$ vertices,
\[
t(\overrightarrow S_2,G)
=
\frac1{n^3}\sum_{v\in V(G)}d_G^+(v)^2,
\]
while
\[
t(\overrightarrow S_2,\mathbf W)
=
\int_0^1 d_W^+(x)^2\,dx.
\]
Consequently,
\[
\int_0^1d_W^+(x)^2\,dx
=
\lim_{i\to\infty}
\frac1{m_i^3}
\sum_{v\in V(D_i)}d_{D_i}^+(v)^2
=
\frac13.
\tag{7.1}\label{eq:limit-square}
\]
Moreover, since every $D_i$ is $\Cvec{3}$-free, we have
$t(\Cvec{3},D_i)=0$ for every $i$,
and hence
\[
t(\Cvec{3},\mathbf W)=0.
\tag{7.2}\label{eq:limit-triangle-free}
\]

We first prove the graphon form of the Brown--Harary bound:
\[
e^+(S,S)\le\frac{\lambda(S)^2}{2}
\qquad\text{for every measurable }S\subseteq[0,1].
\tag{7.3}\label{eq:graphon-BH}
\]
The claim is trivial when $\lambda(S)=0$. Suppose $\lambda(S)>0$. Since the
integrand defining $t(\Cvec{3},\mathbf W)$ is nonnegative, the identity
\eqref{eq:limit-triangle-free} implies
\[
\int_{S^3}
W^+(x,y)W^+(y,z)W^+(z,x)\,dx\,dy\,dz=0.
\tag{7.4}\label{eq:restricted-triangle-zero}
\]
Thus the rescaled restriction of $\mathbf W$ to $S$ also has zero directed
triangle density.

Assume for contradiction that the rescaled restriction has arc density
$\alpha:=\lambda(S)^{-2}\,e^+(S,S)>1/2$. Sample $n$ latent points independently from the rescaled
space and, for each unordered pair, independently sample one of the four arc
states according to the restricted colored graphon. The sampled digraph is
loopless. By \eqref{eq:restricted-triangle-zero}, its expected number of
labeled directed triangles is zero. Since this number is nonnegative, the
sampled digraph is $\Cvec{3}$-free almost surely.

On the other hand, the expected number of arcs in the sampled digraph is
$\alpha n(n-1)$. For all sufficiently large $n$, this is larger than
$\operatorname{ex}(n,\Cvec{3})$, by
Theorem~\ref{thm:brown-harary-zhou-li}. This is impossible, because every
$\Cvec{3}$-free outcome has at most $\operatorname{ex}(n,\Cvec{3})$ arcs.
Therefore $\alpha\le1/2$, proving \eqref{eq:graphon-BH}.

Let $S\subseteq[0,1]$ have measure $s$. Then
\[
\int_Sd_W^+(x)\,dx
=
e^+(S,S)+e^+(S,S^c)
\le
\frac{s^2}{2}+s(1-s)
=
s-\frac{s^2}{2}.
\tag{7.5}\label{eq:subset-degree-bound}
\]
If $d_W^{+,\downarrow}$ denotes the decreasing rearrangement of $d_W^+$,
then, by the Hardy--Littlewood rearrangement principle,
\[
\int_0^s d_W^{+,\downarrow}(x)\,dx
=
\sup_{\lambda(S)=s}\int_Sd_W^+(x)\,dx.
\]
Therefore \eqref{eq:subset-degree-bound} gives
\[
\int_0^s d_W^{+,\downarrow}(x)\,dx
\le
s-\frac{s^2}{2}
=
\int_0^s(1-x)\,dx
\qquad(0\le s\le1).
\]
Applying Lemma~\ref{lem:continuous-majorization} with
$f=d_W^{+,\downarrow}$ and $g(x)=1-x$, and using
\eqref{eq:limit-square}, we obtain
\[
d_W^{+,\downarrow}(x)=1-x
\qquad\text{for almost every }x.
\tag{7.6}\label{eq:degree-rearrangement}
\]
In particular, $d_W^+$ is uniformly distributed on $[0,1]$ and has no atoms.

For $s\in[0,1]$, let
$S_s=\{x:d_W^+(x)>1-s\}$.
Since $d_W^+$ has the uniform distribution, we have $\lambda(S_s)=s$, and
\[
\int_{S_s}d_W^+(x)\,dx
=
s-\frac{s^2}{2}.
\]
Thus equality holds in both estimates used to prove
\eqref{eq:subset-degree-bound}. In particular,
$e^+(S_s,S_s^c)=s(1-s)$.
Since $0\le W^+\le1$ and $\lambda(S_s\times S_s^c)=s(1-s)$, it follows that
$W^+(x,y)=1$ for almost every $(x,y)\in S_s\times S_s^c$.

Taking these full-measure statements for all rational $s\in[0,1]$ and using
the fact that $d_W^+$ has no atoms, we obtain
\[
W^+(x,y)=1
\qquad\text{for almost every }(x,y)\text{ with }
d_W^+(x)>d_W^+(y).
\tag{7.7}\label{eq:degree-order-arcs}
\]
For almost every $x$, the set
$\{y:d_W^+(y)<d_W^+(x)\}$
has measure $d_W^+(x)$. By Fubini's theorem applied to
\eqref{eq:degree-order-arcs}, for almost every $x$ we have $W^+(x,y)=1$ for
almost every $y$ in this set; these arcs already account for the entire
out-degree of $x$. Hence
\[
W^+(x,y)=0
\qquad\text{for almost every }(x,y)\text{ with }
d_W^+(x)<d_W^+(y).
\tag{7.8}\label{eq:opposite-degree-order-no-arcs}
\]

Let $r(x)=1-d_W^+(x)$.
If $r(x)<r(y)$, then $d_W^+(x)>d_W^+(y)$, so
$W^+(x,y)=1$ and $W^+(y,x)=0$
for almost every such pair. Since
$W^+(x,y)=W_{10}(x,y)+W_{11}(x,y)$
and, by the symmetry relations,
$W^+(y,x)=W_{01}(x,y)+W_{11}(x,y)$,
we get $W_{10}(x,y)=1$ and
$W_{00}(x,y)=W_{01}(x,y)=W_{11}(x,y)=0$
for almost every pair with $r(x)<r(y)$. Similarly, the state is $01$ for
almost every pair with $r(x)>r(y)$. Thus the limiting colored graphon is
the pullback, through the rank map $r$, of the transitive-tournament
graphon; below we only use \eqref{eq:degree-order-arcs} and
\eqref{eq:opposite-degree-order-no-arcs}.

It remains to return from the limit object to finite edit distance. Fix
$K\in\mathbb{N}$ and partition the limit space into rank blocks
$U_p=\{x:\ (p-1)/K\le r(x)<p/K\}$, $1\le p\le K$. Since $r$ is uniformly
distributed, each $U_p$ has measure $1/K$. For $p<q$ and almost every
$(x,y)\in U_p\times U_q$ we have $r(x)<r(y)$, hence $W^+(x,y)=1$ and
$W^+(y,x)=0$ by \eqref{eq:degree-order-arcs} and \eqref{eq:opposite-degree-order-no-arcs}. Therefore
\[
e^+(U_p,U_q)=\frac{1}{K^2},\qquad e^+(U_q,U_p)=0\qquad(p<q).
\]
By Lemma~\ref{lem:rounding}, applied along the convergent subsequence, for
every fixed $K$ and all sufficiently large $i$ there is a partition
$V(D_i)=V_{i,1}\cup\dots\cup V_{i,K}$ with $|V_{i,p}|=\tfrac{m_i}{K}+o_K(m_i)$
such that, summed over all $p<q$, the number of missing forward arcs from
$V_{i,p}$ to $V_{i,q}$ together with the number of backward arcs from
$V_{i,q}$ to $V_{i,p}$ is $o_K(m_i^2)$. Here and below, $o_K(\cdot)$
denotes a quantity of smaller order as $i\to\infty$ for each fixed $K$.

List the vertices block by block, first all vertices of $V_{i,1}$, then all
vertices of $V_{i,2}$, and so on. Let $J_i$ be the ordered digon chain
obtained from this linear order by pairing consecutive vertices, with a
final singleton if $m_i$ is odd. Correcting all inter-block arcs costs
$o_K(m_i^2)$ arc changes. Correcting all arcs with both endpoints in the
same part $V_{i,p}$ costs at most
\[
2\sum_{p=1}^K\binom{|V_{i,p}|}{2}
\le
\frac{m_i^2}{K}+o_K(m_i^2).
\]
Finally, at most $K-1$ digon blocks of $J_i$ cross a boundary between
consecutive parts $V_{i,p}$ and $V_{i,p+1}$; the additional cost caused by
these crossing pairs is $O(K)$. Hence
\[
|A(D_i)\triangle A(J_i)|
\le
o_K(m_i^2)+\frac{m_i^2}{K}+o_K(m_i^2)+O(K).
\]
Choose $K$ sufficiently large and then $i$ sufficiently large. The right-hand
side is then smaller than $\rho_0m_i^2$, contradicting the choice of $D_i$.
This proves the lemma.
\end{proof}

\begin{lemma}\label{lem:chain-align}
For every $\rho>0$ there exists $\theta>0$ such that the following holds. If
$J$ and $J'$ are ordered digon chains on the same $m$-element set and
$\sum_y\bigl(d_J^+(y)-d_{J'}^+(y)\bigr)^2\le\theta m^3$,
then
\[
|A(J)\triangle A(J')|\le\rho m^2.
\]
\end{lemma}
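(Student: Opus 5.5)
An ordered digon chain $J$ on an $m$-set is determined by a linear order, and the out-degree of a vertex records its position. If $y$ lies in the $k$-th block ($k\ge1$), then $d_J^+(y)=m-2k+1$, or $0$ for a final singleton. The plan is to turn closeness of the two degree functions into closeness of the induced linear orders, and then count discordant pairs. For every unordered pair $\{x,y\}$ in different blocks, $J$ contains exactly one arc, from the earlier block to the later. Within a block $J$ contains a digon. Hence every pair contributes at most $2$ to $|A(J)\triangle A(J')|$, and it contributes $0$ whenever $x,y$ lie in distinct blocks in both chains and are ordered the same way in both. So it suffices to bound the number of pairs that are either in a common block of $J$ or of $J'$ (at most $m$ such pairs), or are ordered oppositely in $J$ and $J'$.

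Write $\delta_y=|d_J^+(y)-d_{J'}^+(y)|$. By Cauchy--Schwarz, $\sum_y\delta_y\le m^{1/2}(\theta m^3)^{1/2}=\sqrt{\theta}\,m^2$. Fix a threshold $t=\tau m$ with $\tau$ to be chosen. By Markov's inequality the set $B=\{y:\delta_y>t\}$ has size at most $\sqrt\theta\,m/\tau$, so pairs meeting $B$ contribute at most $2m|B|\le 2\sqrt\theta m^2/\tau$.

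For pairs $x,y\notin B$ that are ordered oppositely, say $d_J^+(x)>d_J^+(y)$ but $d_{J'}^+(x)<d_{J'}^+(y)$ (distinct blocks in both), both degree gaps are forced to be small. Indeed, $d_J^+(x)-d_J^+(y)\le \delta_x+\delta_y+\bigl(d_{J'}^+(x)-d_{J'}^+(y)\bigr)<2t$. Within a chain, vertices whose degrees differ by less than $2t$ lie within $O(t)$ positions of each other, so each $x$ has at most $2t+O(1)$ such partners $y$. This gives at most $m(2\tau m+O(1))$ discordant pairs, each costing at most $2$. Summing, $|A(J)\triangle A(J')|\le 2\sqrt\theta m^2/\tau+4\tau m^2+O(m)$.

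Choosing $\tau=\rho/16$ and then $\theta$ small makes this at most $\rho m^2/2+O(m)$. The step needing care is the additive $O(m)$ term for small $m$. For $m$ below a constant $m_0(\rho)$, I will take $\theta<1/m_0^3$; then the hypothesis forces $\sum_y(d_J^+-d_{J'}^+)^2=0$, since it is an integer less than $1$. Equal degree functions determine the block layers, and hence the chains (the same argument as in Corollary~\ref{cor:extremal-palettes}), so $J=J'$. For large $m$ the $O(m)$ term is below $\rho m^2/2$. The main obstacle is simply verifying the ``degree gap $<2t$ implies $O(t)$ positions apart'' counting carefully near the odd final singleton; this is routine, since degrees decrease by $2$ per block.
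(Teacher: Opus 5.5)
Your argument is correct, but it takes a different route from the paper's.

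\textbf{The paper's route.} It fixes a linear order for each chain, aligning the internal orders of equally indexed blocks so that $|p_J(y)-p_{J'}(y)|\le 3|r_J(y)-r_{J'}(y)|$. It then invokes the Diaconis--Graham inequality (Kendall's tau is at most Spearman's footrule) to control the discrepancy between the associated transitive tournaments. The reversed digon arcs are bounded separately. The result is the clean linear estimate $|A(J)\triangle A(J')|\le 4\sum_y|d_J^+(y)-d_{J'}^+(y)|$, valid for every $m$. Cauchy--Schwarz then gives $\theta=(\rho/4)^2$, with no case distinction on $m$.

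\textbf{Your route.} You discard the few vertices with displacement larger than $\tau m$ by Markov's inequality. You then count inverted pairs among the remaining vertices directly, using that such a pair must lie fewer than $t$ blocks apart in $J$. This is fully elementary and self-contained: it needs neither the alignment of internal orders nor an external rank-correlation inequality.

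\textbf{Trade-off.} Your bound is quantitatively weaker, with $\theta$ of order $\rho^4$ rather than $\rho^2$. It also picks up an additive $O(m)$ term from pairs sharing a block, which is why you need the separate small-$m$ step. That step is correct: integrality forces the degree functions to coincide, and equal degree functions force $J=J'$. Either argument suffices for the qualitative use of the lemma in Theorem~\ref{thm:edit-stability}.
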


\begin{proof}
For an ordered digon chain $K$, let $r_K(y)$ denote the index of the block
containing $y$, regarding the final singleton, when present, as the last
block. In both parity cases,
$d_K^+(y)=m-2r_K(y)+1$,
and hence
\[
|r_J(y)-r_{J'}(y)|
=
\frac12|d_J^+(y)-d_{J'}^+(y)|.
\tag{7.9}\label{eq:block-rank-distance}
\]

Choose a linear order associated with each chain by listing the blocks in
order and then listing the vertices inside each two-vertex block. We choose
the two possible internal orders block by block as follows: whenever a block
of $J$ and the block with the same index in $J'$ have a common vertex, we
place this common vertex in the same internal position in both orders. This
choice can be made independently for each block.

Let $p_J(y)$ and $p_{J'}(y)$ be the positions of $y$ in the two resulting
linear orders. If $r_J(y)=r_{J'}(y)$, then the choice of internal orders gives $p_J(y)=p_{J'}(y)$.
If $r_J(y)\ne r_{J'}(y)$, then the two positions differ by at most
\[
2|r_J(y)-r_{J'}(y)|+1
\le
3|r_J(y)-r_{J'}(y)|.
\]
Thus, for every $y$,
\[
|p_J(y)-p_{J'}(y)|
\le
3|r_J(y)-r_{J'}(y)|.
\tag{7.10}\label{eq:position-rank-bound}
\]

By an inequality of Diaconis and Graham \cite{DiaconisGraham}, the number of inversions between two linear orders (Kendall's tau distance) is at most the
sum of the position displacements (Spearman's footrule). Since one inversion reverses the orientation of one
unordered pair, it contributes two directed arcs to the symmetric difference
of the associated transitive tournaments. Hence these transitive tournaments
differ in at most
\[
2\sum_y |p_J(y)-p_{J'}(y)|
\le
6\sum_y|r_J(y)-r_{J'}(y)|
\]
arcs.

Each ordered digon chain is obtained from its associated transitive
tournament by adding one reverse arc inside each two-vertex block. Let
$M_J$ and $M_{J'}$ be these two added-arc sets. If an added arc belongs to
$M_J\triangle M_{J'}$, then it is incident with at least one vertex whose
block index is different in the two chains. Since each vertex is incident
with at most one added arc in each chain,
\[
|M_J\triangle M_{J'}|
\le
2|\{y:r_J(y)\ne r_{J'}(y)\}|
\le
2\sum_y|r_J(y)-r_{J'}(y)|.
\]
Consequently,
\[
|A(J)\triangle A(J')|
\le
8\sum_y|r_J(y)-r_{J'}(y)|
=
4\sum_y|d_J^+(y)-d_{J'}^+(y)|.
\]
Cauchy--Schwarz gives
\[
|A(J)\triangle A(J')|
\le
4\sqrt{m\sum_y(d_J^+(y)-d_{J'}^+(y))^2}
\le
4\sqrt\theta\,m^2.
\]
Taking $\theta\le(\rho/4)^2$ proves the lemma.
\end{proof}

\begin{theorem}\label{thm:edit-stability}
For every $\eta>0$ there is $\varepsilon>0$ such that the following holds
for every $m$. Let $\mathcal P=(\C,\A)$ be an $m$-color palette satisfying
$\mathcal P_L\npreceq\mathcal P$
and $\mathcal P_R\npreceq\mathcal P$, and suppose that
$|\A|\ge E_m-\varepsilon m^3$.
Then there is an ordered digon chain $J$ on $\C$ such that
\[
|\A(\mathcal P)\triangle\A(\mathcal P_m^\star(J))|\le\eta m^3.
\]
\end{theorem}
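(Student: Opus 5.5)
The plan is to combine the defect identity of Proposition~\ref{prop:defect} with the two stability lemmas. Set $a_y=d^-_{G_L(\mathcal P)}(y)$, $b_y=d^+_{G_R(\mathcal P)}(y)$ and $t_y=|B_y|$. If $|\A|\ge E_m-\varepsilon m^3$, then each of the four nonnegative terms in Proposition~\ref{prop:defect} is at most $\varepsilon m^3$. This gives four facts:
\[
\sum_y a_y^2\ge E_m-2\varepsilon m^3,\quad \sum_y b_y^2\ge E_m-2\varepsilon m^3,\quad \sum_y(a_y-b_y)^2\le 2\varepsilon m^3,\quad \sum_y(a_yb_y-t_y)\le\varepsilon m^3.
\]
Since $G_R(\mathcal P)$ and $G_L(\mathcal P)^{\mathrm{rev}}$ are loopless and $\Cvec{3}$-free (as in the proof of Theorem~\ref{thm:intro-palette}), Lemma~\ref{lem:C3-digraph-stability} with $2\varepsilon\le\theta(\rho)$ gives ordered digon chains $J_R$ and $J_L$ on $\C$. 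They satisfy $|A(G_R)\triangle A(J_R)|\le\rho m^2$ and $|A(G_L^{\mathrm{rev}})\triangle A(J_L)|\le\rho m^2$.

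Next I align the two chains using Lemma~\ref{lem:chain-align}. I need to show that $\sum_y(d^+_{J_L}(y)-d^+_{J_R}(y))^2$ is small compared with $m^3$. Each degree difference in this sum is bounded by three pieces: $|d^+_{J_L}(y)-a_y|$, then $|a_y-b_y|$, then $|b_y-d^+_{J_R}(y)|$. The outer pieces come from the arc symmetric differences. All degrees are at most $m$, so $\sum_y |d^+_{J_R}(y)-b_y|^2\le m\sum_y|d^+_{J_R}(y)-b_y|\le m\cdot\rho m^2$, and the same holds for $J_L$. The middle piece is controlled by the third fact above. Hence the squared sum is at most $O((\rho+\varepsilon)m^3)$. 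Lemma~\ref{lem:chain-align} then gives $|A(J_L)\triangle A(J_R)|\le\rho' m^2$, and I set $J=J_R$.

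Finally I compare $\A$ with $\A(\mathcal P_m^\star(J))=\{(x,y,z):x,z\in N^+_J(y)\}$. Write $N_y=N^-_{G_L}(y)\times N^+_{G_R}(y)$. Then $B_y\subseteq N_y$ and $|N_y\setminus B_y|=a_yb_y-t_y$, so the fourth fact gives $\sum_y|N_y\triangle B_y|\le\varepsilon m^3$. It remains to bound $\sum_y|N_y\triangle (N^+_J(y)\times N^+_J(y))|$. Note that $N^-_{G_L}(y)=N^+_{G_L^{\mathrm{rev}}}(y)$. The pointwise bound $|X\times Z\,\triangle\, X'\times Z'|\le m(|X\triangle X'|+|Z\triangle Z'|)$ reduces this sum to $m$ times
\[
|A(G_L^{\mathrm{rev}})\triangle A(J)|+|A(G_R)\triangle A(J)|.
\]
This quantity is at most $(2\rho+\rho')m^2$, using the triangle inequality through $J_L$. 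So the total error is at most $(\varepsilon+2\rho+\rho')m^3$. Choosing $\rho$ and $\rho'$ small in terms of $\eta$, and then $\varepsilon$ small, finishes the proof.

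I expect the main obstacle to be the alignment step. Lemma~\ref{lem:C3-digraph-stability} produces the two chains independently, so they must be tied together through the degree vectors. This requires converting arc edit distance into $\ell_2$ control of out-degrees, which uses only the trivial bound $|d|\le m$. The only extra care needed is the order of quantifiers in choosing $\theta$, $\rho$, $\rho'$ and $\varepsilon$.
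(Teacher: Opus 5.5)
Your proposal is correct and follows the paper's proof essentially step by step. Both arguments use the same four ingredients: the defect identity of Proposition~\ref{prop:defect}; Lemma~\ref{lem:C3-digraph-stability} applied to $G_R(\mathcal P)$ and $G_L(\mathcal P)^{\mathrm{rev}}$; alignment of $J_L$ and $J_R$ through $\sum_y(d^+_D(y)-d^+_{D'}(y))^2\le m\,|A(D)\triangle A(D')|$ together with Lemma~\ref{lem:chain-align}; and the final slice-by-slice product estimate with $J=J_R$.

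The only point to make precise is the quantifier order you already flag. You cannot choose $\rho$ and $\rho'$ independently. First fix $\rho'$ and take $\theta_1$ from Lemma~\ref{lem:chain-align}. Then choose $\rho$ with $6\rho\le\theta_1/2$ and $2\rho+\rho'\le\eta/2$. Finally choose $\varepsilon$ with $6\varepsilon\le\theta_1/2$, $\varepsilon\le\eta/2$ and $2\varepsilon\le\theta_0(\rho)$. This is exactly the paper's choice of $\delta,\theta_1,\xi,\theta_0,\varepsilon$.
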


\begin{proof}
Fix $\eta>0$ and put $\delta=\eta/4$. Let $\theta_1>0$ be supplied by
Lemma~\ref{lem:chain-align} with $\rho=\delta$. Choose $\xi>0$ so small that
$2\xi+\delta\le\frac\eta2$ and
$6\xi\le\frac{\theta_1}{2}$.
Let $\theta_0>0$ be supplied by
Lemma~\ref{lem:C3-digraph-stability} with $\rho=\xi$. Finally, choose
$\varepsilon>0$ such that
$2\varepsilon\le\theta_0$,
$6\varepsilon\le\frac{\theta_1}{2}$, and $\varepsilon\le\frac\eta2$.

For each $y\in\C$, write
\[
B_y=\{(x,z)\in\C^2:(x,y,z)\in\A\},
\]
and let $a_y=d^-_{G_L(\mathcal P)}(y)$,
$b_y=d^+_{G_R(\mathcal P)}(y)$ and
$t_y=|B_y|$.
Since $E_m-|\A|\le\varepsilon m^3$,
Proposition~\ref{prop:defect} gives
\[
E_m-\sum_ya_y^2\le2\varepsilon m^3,
\qquad
E_m-\sum_yb_y^2\le2\varepsilon m^3,
\]
\[
\sum_y(a_y-b_y)^2\le2\varepsilon m^3,
\qquad
\sum_y(a_yb_y-t_y)\le\varepsilon m^3.
\tag{7.11}\label{eq:defect-components}
\]

Since $\mathcal P$ avoids $\mathcal P_L$ and $\mathcal P_R$, the auxiliary
digraphs $G_L(\mathcal P)$ and $G_R(\mathcal P)$ are loopless and
$\Cvec{3}$-free. Therefore $G_L(\mathcal P)^{\mathrm{rev}}$ and
$G_R(\mathcal P)$ satisfy the hypotheses of
Lemma~\ref{lem:C3-digraph-stability}. By the first two inequalities in
\eqref{eq:defect-components} and the choice of $\varepsilon$, this lemma
gives ordered digon chains $J_L,J_R$ on $\C$ such that
\[
|A(G_L(\mathcal P)^{\mathrm{rev}})\triangle A(J_L)|\le\xi m^2,
\qquad
|A(G_R(\mathcal P))\triangle A(J_R)|\le\xi m^2.
\tag{7.12}\label{eq:aux-close-to-chains}
\]

If two digraphs on the same $m$-vertex set differ in $r$ arcs, then the sum
of the squared differences of corresponding out-degrees is at most $mr$.
Using this observation, \eqref{eq:defect-components},
\eqref{eq:aux-close-to-chains}, and
\[
(x+y+z)^2\le3(x^2+y^2+z^2),
\]
we obtain
\[
\sum_y\bigl(d_{J_L}^+(y)-d_{J_R}^+(y)\bigr)^2
\le
6(\xi+\varepsilon)m^3
\le
\theta_1m^3.
\]
Lemma~\ref{lem:chain-align} therefore gives
$|A(J_L)\triangle A(J_R)|\le\delta m^2$.
Set $J=J_R$. Combining this with \eqref{eq:aux-close-to-chains}, we get
\[
|A(G_L(\mathcal P)^{\mathrm{rev}})\triangle A(J)|
+
|A(G_R(\mathcal P))\triangle A(J)|
\le
(2\xi+\delta)m^2.
\tag{7.13}\label{eq:both-aux-close}
\]

For each middle color $y$, let
$X_y=N^-_{G_L(\mathcal P)}(y)$,
$Y_y=N^+_{G_R(\mathcal P)}(y)$, and $Z_y=N_J^+(y)$.
Equation~\eqref{eq:both-aux-close} gives
\[
\sum_y|X_y\triangle Z_y|
+
\sum_y|Y_y\triangle Z_y|
\le
(2\xi+\delta)m^2.
\tag{7.14}\label{eq:neighborhood-close}
\]
For every $y$, we use the elementary bound
\[
|(X_y\times Y_y)\triangle(Z_y\times Z_y)|
\le
m|X_y\triangle Z_y|+m|Y_y\triangle Z_y|.
\]
Summing over $y$ and applying \eqref{eq:neighborhood-close}, we obtain
\[
\sum_y|(X_y\times Y_y)\triangle(Z_y\times Z_y)|
\le
(2\xi+\delta)m^3.
\tag{7.15}\label{eq:product-close}
\]

The middle-$y$ slice $B_y$ of $\mathcal P$ is contained in
$X_y\times Y_y$, and \eqref{eq:defect-components} gives
\[
\sum_y|(X_y\times Y_y)\setminus B_y|
=
\sum_y(a_yb_y-t_y)
\le
\varepsilon m^3.
\tag{7.16}\label{eq:slice-missing}
\]
On the other hand, the middle-$y$ slice of
$\mathcal P_m^\star(J)$ is exactly $Z_y\times Z_y$. Therefore
\[
|\A(\mathcal P)\triangle\A(\mathcal P_m^\star(J))|
\le
\sum_y|(X_y\times Y_y)\triangle(Z_y\times Z_y)|
+
\sum_y|(X_y\times Y_y)\setminus B_y|.
\]
Using \eqref{eq:product-close} and \eqref{eq:slice-missing}, we get
\[
|\A(\mathcal P)\triangle\A(\mathcal P_m^\star(J))|
\le
(2\xi+\delta+\varepsilon)m^3
\le
\eta m^3,
\]
as required.
\end{proof}
\section{Palette Lagrangians}\label{sec:lagrangian}
Given a palette $\mathcal P=(\C,\A)$, define its \emph{Lagrangian polynomial} by
\[
\lambda_{\mathcal P}(\mathbf{x}) = \sum_{(i,j,k) \in \A} x_i x_j x_k,
\qquad \mathbf{x}=(x_i)_{i\in\C}.
\]
The \emph{Lagrangian} of $\mathcal P$, denoted by $\Lambda(\mathcal P)$, is the maximum of $\lambda_{\mathcal P}(\mathbf{x})$ subject to $\sum_{i\in\C}x_i=1$ and $x_i\in[0,1]$ for all $i\in\C$. Let $\Lambda_{\text{pal}}$ be the set of all possible values of $\Lambda(\mathcal P)$.

\begin{theorem}[{King--Piga--Sales--Sch\"ulke~\cite[Theorem~1.1]{KingPigaSalesSchulke2025}}]\label{thm:KPSS}
For every $\lambda \in \Lambda_{\text{pal}}$, there is a finite family $\mathcal{F}$ of $3$-graphs with $\piu(\mathcal{F})=\lambda$.
\end{theorem}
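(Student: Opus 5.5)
Since this is the cited result of King, Piga, Sales and Sch\"ulke, we only outline how one would reprove it from the tools recalled above. Fix a palette $\mathcal P=(\C,\A)$ with $\Lambda(\mathcal P)=\lambda$. The natural candidate family is $\mathcal F_N$, the set of all $3$-graphs on at most $N$ vertices that are \emph{not} $\mathcal P$-colorable, for $N$ to be chosen.

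\textbf{Lower bound.} We use a weighted version of the standard palette construction. Let $\mathbf x$ be a maximizer of $\lambda_{\mathcal P}$. Order $n$ vertices, and color each pair independently, choosing color $i$ with probability $x_i$. Declare a triple an edge exactly when its color pattern lies in $\A$. A routine concentration argument shows that the resulting $3$-graph is $(\lambda-o(1),\eta)$-uniformly dense. By construction, every sub-$3$-graph of it is $\mathcal P$-colorable, so it contains no member of $\mathcal F_N$. Hence $\piu(\mathcal F_N)\ge\lambda$ for every $N$.

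\textbf{Upper bound.} We would use the family version of Theorem~\ref{thm:lamaison}: $\piu(\mathcal F)$ is the supremum of $d(\mathcal Q)$ over palettes $\mathcal Q$ that color no member of $\mathcal F$. The key algebraic step is the following. If $\mathcal Q\preceq\mathcal P$ or $\mathcal Q\preceq\revop(\mathcal P)$ via a map $f$, then
\[
d(\mathcal Q)=\lambda_{\mathcal Q}(\mathbf u)\le\lambda_{\mathcal P}(f_*\mathbf u)\le\Lambda(\mathcal P),
\]
where $\mathbf u$ is the uniform vector on $\C(\mathcal Q)$ and $f_*\mathbf u$ is its pushforward. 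The first inequality holds because distinct admissible triples of $\mathcal Q$ map to admissible triples of $\mathcal P$, and the monomials collect with nonnegative coefficients. The reverse palette has the same Lagrangian.

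Conversely, suppose $\mathcal Q\npreceq\mathcal P$ and $\mathcal Q\npreceq\revop(\mathcal P)$. Then Theorem~\ref{thm:separation} supplies a finite $3$-graph that is $\mathcal Q$-colorable but not $\mathcal P$-colorable. Such a graph lies in $\mathcal F_N$ as soon as $N$ is at least its order. Thus every palette of density greater than $\lambda$ colors some member of $\bigcup_N\mathcal F_N$.

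\textbf{Main obstacle: uniformity.} The separating $3$-graph depends on $\mathcal Q$, and there are infinitely many palettes $\mathcal Q$. To obtain a single finite $N$, the plan is to reduce to palettes with a bounded number of colors. First, a palette with $d(\mathcal Q)>\lambda$ has Lagrangian greater than $\lambda$. We would then try to show that it contains a sub-palette (or a homomorphic image under merging colors) on at most $C(\mathcal P,\lambda)$ colors whose blow-ups still have density above $\lambda$. Here we use that densities of blow-ups of a fixed palette approach its Lagrangian, so the supremum in Lamaison's theorem may be taken over blow-ups of a finite list of small palettes.

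There are only finitely many palettes on at most $C$ colors. Taking $N$ to exceed the orders of the separating graphs for each of them gives a finite $\mathcal F=\mathcal F_N$ with $\piu(\mathcal F)\le\lambda$, and hence $\piu(\mathcal F)=\lambda$. Bounding the number of colors needed to beat $\lambda$, via a compactness or regularity-type argument on palettes, is the step I expect to require the real work.
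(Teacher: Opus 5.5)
The paper gives no proof of this statement; it quotes it from King--Piga--Sales--Sch\"ulke. Your outline therefore has to stand on its own, and it has a genuine gap exactly at the step you flag. The parts you carry out are sound. The weighted random construction gives $\piu(\mathcal F_N)\ge\lambda$ for every $N$. The pushforward computation shows that $\mathcal Q\preceq\mathcal P$ or $\mathcal Q\preceq\revop(\mathcal P)$ forces $d(\mathcal Q)\le\Lambda(\mathcal P)$. Theorem~\ref{thm:separation} then shows that each individual palette $\mathcal Q$ with $d(\mathcal Q)>\lambda$ colors a member of $\mathcal F_N$ once $N\ge N(\mathcal Q)$.

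Uniformity is even easy away from $\lambda$. Suppose $|\C(\mathcal Q)|\ge k$, let $S$ be a uniformly random $k$-subset of $\C(\mathcal Q)$, and put $\mathcal Q[S]=(S,\A(\mathcal Q)\cap S^3)$. Then $\mathbb E\,d(\mathcal Q[S])\ge(1-3/k)\,d(\mathcal Q)$. Hence every palette with $d(\mathcal Q)>\lambda+\varepsilon$ has a sub-palette on $k=O(1/\varepsilon)$ colors of density above $\lambda$, and there are only finitely many $k$-color palettes. But this only yields $\piu(\mathcal F_N)\le\lambda+\varepsilon$ for $N\ge N(\varepsilon)$, that is, $\piu(\mathcal F_N)\to\lambda$. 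The theorem asserts equality for a single finite family. Its real content lies in palettes with $\lambda<d(\mathcal Q)\le\lambda+\varepsilon$ and unboundedly many colors, and your sketch does not handle them.

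The remedy you propose aims at the wrong statement. Asking for a sub-palette on at most $C(\mathcal P,\lambda)$ colors with Lagrangian above $\lambda$ is a jump-type property. For ordinary hypergraph Lagrangians, which appear among palette Lagrangians (up to a factor $6$) by admitting all orderings of each edge, such statements fail in general. The Frankl--R\"odl non-jump constructions have Lagrangian above the threshold, while every subgraph on boundedly many vertices stays at or below it. The property is also more than you need: a bounded witness only has to be a sub-palette admitting no homomorphism to $\mathcal P$ or $\revop(\mathcal P)$, and its Lagrangian may well be at most $\lambda$.

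For your family $\mathcal F_N$, the missing step is precisely an exact local-to-global bound: there is $C$ such that every palette all of whose sub-palettes on at most $C$ colors map into $\mathcal P$ or $\revop(\mathcal P)$ has density at most $\Lambda(\mathcal P)$. The sampling argument above proves this only up to a factor $1+O(1/C)$. Removing that error requires a genuine stability or exactness argument that uses the structure of $\mathcal P$, and nothing in your outline supplies it.

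Separately, merging colors goes in the wrong direction. A quotient map $\mathcal Q\to\mathcal Q''$ gives $\mathcal Q\preceq\mathcal Q''$, so a graph separating $\mathcal Q''$ from $\mathcal P$ need not be $\mathcal Q$-colorable. Only palettes $\mathcal Q'\preceq\mathcal Q$, such as sub-palettes, can serve as witnesses.
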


For the endpoint palette $\mathcal{P}_m^\star=\mathcal{P}_m^\star(\Fchain_{m,2})$, we have the following theorem.

\begin{theorem}\label{thm:lagrangian}
For every $m\ge2$, we have $\Lambda(\mathcal{P}_m^\star)=\frac13-\frac1{3m^2}$.
The unique maximizer is the uniform probability vector on the $m$ colors.
\end{theorem}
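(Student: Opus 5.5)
The plan is induction on $m$, peeling off the first digon block of $\Fchain_{m,2}$ and using that $\lambda_{\mathcal P}$ is homogeneous of degree $3$. First I rewrite the polynomial. By definition of $\mathcal P_m^\star$,
\[
\lambda_{\mathcal P_m^\star}(\mathbf x)=\sum_{y}x_y\Bigl(\sum_{z\in N^+(y)}x_z\Bigr)^2 ,
\]
with $N^+(y)$ the out-neighbourhood in $\Fchain_{m,2}$. Let the first block be $\{u,v\}$, with $x_u=a$, $x_v=b$, and let $R=1-a-b$ be the remaining mass. Then $N^+(u)$ has mass $b+R=1-a$ and $N^+(v)$ has mass $1-b$. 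The remaining colors span a copy of $\mathcal P_{m-2}^\star$, and none of them sends arcs back to $\{u,v\}$. So, writing $\Lambda_k=\Lambda(\mathcal P_k^\star)$ with the conventions $\Lambda_0=\Lambda_1=0$ (a single color has out-degree $0$), homogeneity gives
\[
\lambda(\mathbf x)\le a(1-a)^2+b(1-b)^2+\Lambda_{m-2}\,(1-a-b)^3 .
\]
Equality in the last term requires the rescaled remaining vector to be a maximizer for $\mathcal P_{m-2}^\star$, or $R=0$. The formula $\frac13-\frac1{3k^2}$ also gives $0$ at $k=1$, so the induction hypothesis reads $\Lambda_{m-2}=c:=\frac13-\frac{1}{3(m-2)^2}$ for $m\ge3$. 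The base case $m=2$ is $a(1-a)^2+b(1-b)^2=ab\le\frac14$ when $a+b=1$, with equality only at $a=b=\frac12$.

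The core is the two-variable inequality
\[
f(a,b)=a(1-a)^2+b(1-b)^2+c(1-a-b)^3\le \tfrac13-\tfrac1{3m^2}
\]
on $\{a,b\ge0,\ a+b\le1\}$, with equality only at $a=b=1/m$.

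\emph{Symmetrization.} The function $g(t)=t(1-t)^2$ has $g''(t)=6t-4$, so it is strictly concave on $[0,2/3]$. For fixed $s=a+b$ with both $a,b\le 2/3$, Jensen gives $g(a)+g(b)\le 2g(s/2)$, with equality only when $a=b$. If, say, $a>2/3$, then $b<1/3$ and $R<1/3$. In that case I bound crudely: $g(a)\le g(2/3)=\frac{2}{27}$ and $g(b)\le g(1/3)=\frac4{27}$, so $f\le \frac{6}{27}+\frac{c}{27}<\frac{7}{27}$. This is below $\frac13-\frac1{3m^2}$ once $m\ge3$, and the case $m=2$ was handled separately.

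\emph{One-variable reduction.} It then remains to maximize
\[
h(s)=s(1-s/2)^2+c(1-s)^3, \qquad s\in[0,1].
\]
Its derivative is $h'(s)=(1-s/2)^2-s(1-s/2)-3c(1-s)^2$. Using $3c(m-2)^2=(m-1)(m-3)$, one checks directly that $h'(2/m)=0$, and that $h(2/m)=\frac13-\frac1{3m^2}$ (the uniform vector).

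The main obstacle is showing that $s=2/m$ is the unique global maximizer of the cubic $h$ on $[0,1]$. Here $h'$ is a quadratic in $s$ with leading coefficient $\frac34-3c$. My first attempt is to factor $h'$ explicitly as a multiple of $(s-2/m)$ times a linear factor, locate the second root outside $[0,1]$ or at a local minimum, and then compare $h(2/m)$ with the endpoint values. These are $h(0)=c$, which is smaller because $c<\frac13-\frac1{3m^2}$, and $h(1)=\frac14$, which is smaller for $m\ge3$.

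Finally I track the equality cases. Equality forces $a=b=1/m$, hence $R=(m-2)/m$. For $m\ge4$ it also forces the rescaled remainder to be the maximizer of $\mathcal P_{m-2}^\star$, which is uniform by induction. So every coordinate equals $1/m$, giving uniqueness. For $m=3$, the single remaining color automatically gets mass $1/3$.
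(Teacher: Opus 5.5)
Your proof is correct, but it takes a different route from the paper's.

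\textbf{How the two routes differ.} The paper does not induct. It bounds all digon blocks at once. It writes the contribution of block $i$ (weights $u_i,v_i$, block mass $b_i$, tail mass $T_i$) as $b_iT_i^2+(4T_i+b_i)u_iv_i\le b_i(T_i+b_i/2)^2$. It then uses the telescoping identity $b_i(T_i+b_i/2)^2=\frac13(s_i^3-s_{i+1}^3)-\frac{b_i^3}{12}$ to reach the closed-form bound $\frac13-\frac1{12}\sum_i b_i^3$, and finishes with Jensen on the block masses. Odd $m$ requires a separate one-variable optimization over the singleton weight.

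You instead peel off only the first block and absorb everything else into $\Lambda_{m-2}R^3$ by homogeneity. This gives three things:
\begin{itemize}
\item both parities are handled by a single induction, with base cases $m=2$ and $\Lambda_1=0$;
\item uniqueness of the maximizer propagates automatically through the induction;
\item the price is a fresh cubic optimization at every step.
\end{itemize}
The paper's global telescoping has the extra payoff that keeping the error terms yields the exact defect identity of Proposition~\ref{prop:lag-defect}.

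\textbf{The step you flagged as the main obstacle.} It closes exactly as you planned. Since $3c=1-(m-2)^{-2}$, one gets
\[
h'(s)=\frac{(ms-2)\,\bigl((4-m)s-2\bigr)}{4(m-2)^2}.
\]
The second factor is negative on $[0,1]$ for every $m\ge3$. Hence $h$ is strictly increasing on $[0,2/m]$ and strictly decreasing on $[2/m,1]$, and the endpoint comparison is not even needed.

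\textbf{A simplification of your symmetrization.} The concavity argument and the case split $a>2/3$ can be dropped. For $a+b=s$ and $R=1-s$, your $g$ satisfies the identity
\[
g(a)+g(b)=sR^2+(4-3s)ab,
\]
which is precisely the paper's per-block identity. Since $4-3s>0$, this gives $g(a)+g(b)\le 2g(s/2)$ on the whole region, with equality iff $a=b$.
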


\begin{proof}
Let $\mu$ be an arbitrary feasible probability vector on the color set
of $\mathcal{P}_m^\star$.
Assume first that $m=2s$. Let the two weights in the $i$-th digon block be $u_i,v_i$, set $b_i=u_i+v_i$, and let
$T_i=\sum_{j>i}b_j$.
The contribution of this block is
\[
u_i(T_i+v_i)^2+v_i(T_i+u_i)^2
=b_iT_i^2+(4T_i+b_i)u_iv_i\le b_iT_i^2+(4T_i+b_i)\frac{(u_i+v_i)^2}{4}.
\]
For fixed $b_i$, this contribution is maximized when $u_i=v_i=b_i/2$,
and the maximum is
$b_i\left(T_i+\frac{b_i}{2}\right)^2$.
Writing $s_i=\sum_{j\ge i}b_j$, we have
\[
b_i\left(T_i+\frac{b_i}{2}\right)^2
=\frac13(s_i^3-s_{i+1}^3)-\frac{b_i^3}{12}.
\]
Summing gives
\[
\lambda_{\mathcal{P}_m^\star}(\mu)\le \frac13-\frac1{12}\sum_{i=1}^s b_i^3.
\]
Since $\sum_i b_i=1$, Jensen gives $\sum_i b_i^3\ge 1/s^2$, and hence
\[
\lambda_{\mathcal{P}_m^\star}(\mu)\le \frac13-\frac1{12s^2}=\frac13-\frac1{3m^2}.
\]
Equality requires $b_i=1/s$ for all $i$ and $u_i=v_i=b_i/2$, i.e.\ uniform weights.

If $m=2s+1$, let $c$ be the weight of the final singleton, let
$b_1,\ldots,b_s$ be the digon-block weights, and define
$T_i=c+\sum_{j>i}b_j$.
The same block calculation, now with the singleton included in every tail,
gives
\[
\lambda_{\mathcal{P}_m^\star}(\mu)
\le \frac{1-c^3}{3}-\frac1{12}\sum_{i=1}^s b_i^3
\le \frac{1-c^3}{3}-\frac{(1-c)^3}{12s^2}.
\]
The right-hand side has derivative
$-c^2+\frac{(1-c)^2}{4s^2}$,
so its maximum on $[0,1]$ occurs uniquely at $c=\frac{1}{2s+1}$. Then
\[
\frac{1-c^3}{3}-\frac{(1-c)^3}{12s^2}=\frac{4s(s+1)}{3(2s+1)^2}=\frac13-\frac1{3m^2}.
\]
Equality also requires equality in Jensen's inequality and in each block estimate, so $b_i=(1-c)/s$ and $u_i=v_i=b_i/2$ for all $i$. Thus the unique maximizer is again the uniform vector.
\end{proof}

\begin{proposition}\label{prop:lag-defect}
For $m=2s$, with notation as in the proof of Theorem~\ref{thm:lagrangian},
\[
\frac13-\frac1{3m^2}-\lambda_{\mathcal{P}_m^\star}(\mu)
=
\frac1{12}\left(\sum_{i=1}^s b_i^3-\frac1{s^2}\right)
+\frac14\sum_{i=1}^s(4T_i+b_i)(u_i-v_i)^2.
\]
For $m=2s+1$ the analogous identity is
\[
\frac13-\frac1{3m^2}-\lambda_{\mathcal{P}_m^\star}(\mu)
=
\left(\frac{c^3}{3}+\frac1{12}\sum_{i=1}^s b_i^3-\frac1{3m^2}\right)
+\frac14\sum_{i=1}^s(4T_i+b_i)(u_i-v_i)^2,
\]
where $T_i=c+\sum_{j>i}b_j$ in the odd case. Both right-hand sides are
nonnegative and vanish only at the uniform vector.
\end{proposition}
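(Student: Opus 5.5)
We will prove the identity by computing $\lambda_{\mathcal P_m^\star}(\mu)$ exactly, block by block, using the structure already exhibited in the proof of Theorem~\ref{thm:lagrangian}. The key point is that the block estimate there was an exact identity plus an AM--GM slack. In the $i$-th digon block with weights $u_i,v_i$, the admissible triples with middle color $y$ are $N^+(y)\times N^+(y)$. The out-neighbourhood of $u$ in that block is $\{v\}$ together with all later colors. Hence the middle-$u_i$ contribution is $u_i(T_i+v_i)^2$, and the exact contribution of the block is
\[
b_iT_i^2+(4T_i+b_i)u_iv_i .
\]
Writing $u_iv_i=\frac{b_i^2}{4}-\frac{(u_i-v_i)^2}{4}$, this equals $b_i\left(T_i+\frac{b_i}{2}\right)^2-\frac14(4T_i+b_i)(u_i-v_i)^2$.

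Next we use the telescoping identity $b_i(T_i+b_i/2)^2=\frac13(s_i^3-s_{i+1}^3)-\frac{b_i^3}{12}$, which follows from $s_i=T_i+b_i$ and $s_{i+1}=T_i$ by direct expansion. Summing over blocks gives
\[
\lambda_{\mathcal P_m^\star}(\mu)=\frac13\bigl(s_1^3-T_{\mathrm{last}}^3\bigr)-\frac1{12}\sum_i b_i^3-\frac14\sum_i(4T_i+b_i)(u_i-v_i)^2 .
\]
In the even case $s_1=1$ and the last tail is $0$. Subtracting from $\frac13-\frac1{3m^2}=\frac13-\frac1{12s^2}$ yields the first displayed identity. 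In the odd case the singleton has empty out-neighbourhood, so it contributes nothing as a middle color. Every $T_i$ contains $c$, so the telescoping ends at $s_{s+1}=c$. This gives $\lambda=\frac{1-c^3}{3}-\frac1{12}\sum b_i^3-\frac14\sum(4T_i+b_i)(u_i-v_i)^2$, and rearranging produces the second identity.

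For nonnegativity, the second summand is clearly $\ge0$, since $T_i,b_i\ge0$. For the even case, the power-mean (Jensen) inequality with $\sum b_i=1$ gives $\sum b_i^3\ge 1/s^2$. For the odd case, Jensen gives $\frac{c^3}{3}+\frac1{12}\sum b_i^3\ge \frac{c^3}{3}+\frac{(1-c)^3}{12s^2}$. We then minimize $g(c)=\frac{c^3}{3}+\frac{(1-c)^3}{12s^2}$ on $[0,1]$. The function is strictly convex with critical point $c=1/(2s+1)$, as computed in the proof of Theorem~\ref{thm:lagrangian}. There its value is $\frac13-\frac{4s(s+1)}{3(2s+1)^2}=\frac1{3m^2}$.

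For the vanishing statement, both summands must be zero. Equality in Jensen forces all $b_i$ to be equal, and in the odd case it also forces $c=1/m$. In particular each $b_i>0$. The weight $4T_i+b_i$ is therefore positive, so $u_i=v_i$ for every $i$, which gives the uniform vector. Conversely, the uniform vector makes both terms vanish. The only step requiring care is bookkeeping the exact (rather than upper-bound) block contributions and the telescoping endpoint $c$ in the odd case. Everything else reuses the computations of Theorem~\ref{thm:lagrangian}.
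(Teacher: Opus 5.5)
Your proposal is correct and follows essentially the same route as the paper: you keep the exact block error term $-\frac14(4T_i+b_i)(u_i-v_i)^2$, telescope via $s_i^3-s_{i+1}^3$ (ending at $c$ in the odd case), and then use Jensen together with the one-variable minimization at $c=1/(2s+1)$ to get nonnegativity and the equality case. You are slightly more explicit than the paper in one place, namely in observing that $b_i>0$ forces $4T_i+b_i>0$, which is exactly what justifies concluding $u_i=v_i$.
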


\begin{proof}
The identities are obtained by retaining the exact error term
\[
u_i(T_i+v_i)^2+v_i(T_i+u_i)^2
=b_i\left(T_i+\frac{b_i}{2}\right)^2
-\frac{4T_i+b_i}{4}(u_i-v_i)^2.
\]
For even $m$, the remaining scalar term is nonnegative by Jensen's
inequality. For odd $m$, Jensen's inequality reduces it to
\[
\frac{c^3}{3}+\frac{(1-c)^3}{12s^2}-\frac1{3(2s+1)^2},
\]
which is nonnegative and vanishes uniquely at $c=1/(2s+1)$. The equality
conditions in the block estimates and in Jensen's inequality then give the
uniform vector in both cases.
\end{proof}

\begin{corollary}
$\frac{1}{3}$ is an accumulation point of uniform Tur\'an densities of
finite forbidden families.
\end{corollary}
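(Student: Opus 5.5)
The plan is to combine Theorem~\ref{thm:lagrangian} with Theorem~\ref{thm:KPSS}. For each $m\ge2$, Theorem~\ref{thm:lagrangian} shows that $\lambda_m:=\frac13-\frac1{3m^2}=\Lambda(\mathcal P_m^\star)$, so $\lambda_m\in\Lambda_{\text{pal}}$. Theorem~\ref{thm:KPSS} then supplies a finite family $\mathcal F_m$ of $3$-graphs with $\piu(\mathcal F_m)=\lambda_m$. Thus every $\lambda_m$ is the uniform Tur\'an density of a finite forbidden family.

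It remains to check that these values accumulate at $1/3$ in the strict sense, meaning that infinitely many distinct values lie in every neighbourhood of $1/3$. The sequence $\lambda_m$ is strictly increasing in $m$, and $\lambda_m<\frac13$ with $\lambda_m\to\frac13$. Hence every punctured neighbourhood of $1/3$ contains $\lambda_m$ for all large $m$, and these are distinct points of the set $\{\piu(\mathcal F):\mathcal F\text{ finite}\}$. This is exactly the statement that $1/3$ is an accumulation point.

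There is no real obstacle, since the substantive input, the exact Lagrangian computation, was done in Theorem~\ref{thm:lagrangian}. The only point to verify is that the notion of palette Lagrangian in Theorem~\ref{thm:KPSS} matches the one used here. That notion is the maximum of $\sum_{(i,j,k)\in\A}x_ix_jx_k$ over the simplex, with ordered triples and repetitions allowed, and it is the convention of \cite{KingPigaSalesSchulke2025}, so $\lambda_m\in\Lambda_{\text{pal}}$ applies directly.

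Optionally, one can remark that $\lambda_m$ agrees with the density $d(\mathcal P_m^\star)$. This shows that the uniform weighting is optimal, so the lower bound of Theorem~\ref{thm:intro-limit} and the exact family values coincide at $\frac13-\frac1{3m^2}$.
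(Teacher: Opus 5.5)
Your proposal is correct and follows the paper's proof: Theorem~\ref{thm:lagrangian} puts $\frac13-\frac1{3m^2}$ in $\Lambda_{\text{pal}}$, Theorem~\ref{thm:KPSS} realizes each such value as $\piu(\mathcal F_m)$ for a finite family $\mathcal F_m$, and letting $m\to\infty$ gives the accumulation point. Your explicit check that these values are distinct and increase strictly to $1/3$ is a detail the paper leaves implicit.
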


\begin{proof}
This follows from Theorems~\ref{thm:KPSS} and~\ref{thm:lagrangian} by letting $m\to\infty$.
\end{proof}

\section{Open problems}\label{sec:questions}

Theorem~\ref{thm:intro-palette} reduces a family of finite palette extremal problems to degree-square Tur\'an problems for digraphs. We evaluated this degree-square problem for directed cycles and for the transitive triangle, but the next cases already appear to be substantially more delicate.

\begin{problem}
For which self-converse tournaments $T$ can one determine $\operatorname{ex}_2^+(m,T)$ exactly?
\end{problem}

A particularly natural test case is the strongly connected tournament on four vertices. Let $R_4$ be the tournament with score sequence $(1,1,2,2)$. Unlike the cyclic-triangle case, the ordered complete-block chain need not be extremal for this problem: for $m=5$, the complete tripartite digraph with parts of sizes $\{1,2,2\}$ is $R_4$-free and has degree-square sum $52$, whereas $\Fchain_{5,3}$ has degree-square sum $50$.

\begin{problem}
Determine $\operatorname{ex}_2^+(m,R_4)$, where $R_4$ is the strongly connected self-converse tournament on four vertices.
\end{problem}


\end{document}